\newif\ifstartedinmathmode
\newcommand\encircled[1]{%
  \relax\ifmmode\startedinmathmodetrue\else\startedinmathmodefalse\fi%
  \tikz[baseline,anchor=base]{%
  \node[draw=red,circle,outer sep=0pt,inner sep=.2ex]
    {\ifstartedinmathmode$#1$\else#1\fi};}%
}
\theoremstyle{plain}
\newtheorem{theorem}{Theorem}[section]
\newtheorem{lemma}[theorem]{Lemma}
\newtheorem{corollary}[theorem]{Corollary}
\newtheorem{proposition}[theorem]{Proposition}
\theoremstyle{definition}
\newtheorem{remark}[theorem]{Remark}
\newtheorem{example}[theorem]{Example}
\numberwithin{equation}{section}
\newcommand{\BC}{{\mathbb C}}
\newcommand{\BF}{{\mathbb F}}
\newcommand{\BH}{{\mathbb H}}
\newcommand{\BL}{{\mathbb L}}
\newcommand{\BR}{{\mathbb R}}
\newcommand{\cE}{{\mathcal E}}
\newcommand{\cH}{{\mathcal H}}
\newcommand{\cL}{{\mathcal L}}
\newcommand{\cS}{{\mathcal S}}
\newcommand{\fC}{{\mathfrak C}}
\newcommand{\wtilA}{\widetilde{A}}
\newcommand{\wtilK}{\widetilde{K}}\newcommand{\wtilL}{\widetilde{L}}
\newcommand{\whatA}{\widehat{A}}
\newcommand{\whatK}{\widehat{K}}\newcommand{\whatL}{\widehat{L}}
\newcommand{\al}{\alpha}
\newcommand{\be}{\beta}
\newcommand{\ga}{\gamma}
\newcommand{\la}{\lambda}\newcommand{\La}{\Lambda}
\newcommand{\rank}{\textup{rank\,}}
\newcommand{\im}{\textup{Im}\,}
\newcommand{\kr}{\textup{Ker\,}}
\newcommand{\imag}{\textup{i}\,}
\newcommand{\mat}[1]{\begin{bmatrix} #1 \end{bmatrix}}
\newcommand{\ov}[1]{{\overline{#1}}}
\newcommand{\inn}[2]{\ensuremath{\langle #1,#2 \rangle}}
\newcommand{\tu}[1]{\textup{#1}}
\newcommand{\what}[1]{{\widehat{#1}}}
\newcommand{\ands}{\quad\mbox{and}\quad}
\newcommand{\trace}{\textup{trace\,}}
\newcommand{\BBone}{\mathbb{1}}
\newcommand{\OneVec}{\vec{\mathbf{1}}}
\newcommand{\vect}{\operatorname{vec}}
\begin{document}

\title[Hill representations for $*$-linear matrix maps]{Hill representations for $*$-linear matrix maps}

\author[S. ter Horst]{S. ter Horst}
\address{S. ter Horst, Department of Mathematics, Research Focus Area:\ Pure and Applied Analytics, North-West
University, Potchefstroom, 2531 South Africa and DSI-NRF Centre of Excellence in Mathematical and Statistical Sciences (CoE-MaSS)}
\email{Sanne.TerHorst@nwu.ac.za}

\author[A. Naud\'{e}]{A. Naud\'{e}}
\address{A. Naud\'{e}, Faculty of Engineering and the Built Environment, Academic Development Unit, University of the Witwatersrand, Johannesburg, 2000 South Africa and DSI-NRF Centre of Excellence in Mathematical and Statistical Sciences (CoE-MaSS)}
\email{naudealma@gmail.com}

\thanks{This work is based on the research supported in part by the National Research Foundation of South Africa (Grant Number 118513 and 127364).}

\subjclass[2010]{Primary 15A69; Secondary 15B48}



\keywords{Linear matrix maps, Hermitian preserving maps, matricization, Choi matrix}

\begin{abstract}
In the paper \cite{H73} from 1973 R.D. Hill studied linear matrix maps $\cL:\BC^{q \times q}\to\BC^{n \times n}$ which map Hermitian matrices to Hermitian matrices, or equivalently, preserve adjoints, i.e., $\cL(V^*)=\cL(V)^*$, via representations of the form
\begin{equation*}
\cL(V)=\sum_{k,l=1}^m \BH_{kl}\, A_l V A_k^*,\quad V\in\BC^{q \times q},
\end{equation*}
for matrices $A_1,\ldots,A_m \in\BC^{n \times q}$ and continued his study of such representations in later work, sometimes with co-authors, to completely positive matrix maps and associated matrix reorderings. In this paper we expand the study of such representations, referred to as Hill representations here, in various directions. In particular, we describe which matrices $A_1,\ldots, A_m$ can appear in Hill representations (provided the number $m$ is minimal) and determine the associated Hill matrix $\BH=\left[\BH_{kl}\right]$ explicitly. Also, we describe how different Hill representations of $\cL$ (again with $m$ minimal) are related and investigate further the implication of $*$-linearity on the linear map $\cL$.
\end{abstract}

\maketitle

\section{Introduction}

In this paper we further develop the theory of a representation for linear matrix maps that was introduced and studied by R.D. Hill, in some cases with co-authors, in various papers in the 1970s and 1980s \cite{H69,H73,PH81,OH85}. Part of the study of Hill and co-authors involved a matrix reordering (see \cite{PH81,OH85}) that has reappeared recently in the study of common solutions to the Lyapunov equation \cite{BG15}, generated matrix algebras \cite{P19}, outer spectral radius and completely positive maps \cite{P19Arx} and Nevanlinna-Pick interpolation \cite{AJP20Arx}.

Throughout this paper let $\BF=\BC$ or $\BF=\BR$; we use notation as if $\BF=\BC$, so that it is clear when e.g., one requires a transpose or adjoint, and can distinguish between Hermitian and symmetric matrices, etc. Consider a linear matrix map
\begin{equation}\label{cL-Intro}
\cL:\BF^{q\times q} \to \BF^{n\times n}.
\end{equation}
Following \cite{KMcCSZ19}, we say that $\cL$ is $*$-linear if $\cL(V^*)=\cL(V)^*$ for all $V\in\BF^{q\times q}$; for $\BF=\BC$ this corresponds with $\cL$ mapping Hermitian matrices to Hermitian matrices.

In \cite{H69,H73} Hill studied representations for linear matrix maps $\cL$ of the form
\begin{equation}\label{HillRepIntro}
\cL(V)=\sum_{k,l=1}^m \BH_{kl}\, A_l V A_k^*,\quad V\in\BF^{q \times q},
\end{equation}
for matrices $A_1,\ldots,A_m\in\BF^{n \times q}$, and showed, in \cite{H73} for $\BF=\BC$, that $\cL$ is $*$-linear if and only if $\cL$ admits a representation \eqref{HillRepIntro} with $\BH:=\left[\BH_{kl}\right]_{k,l=1}^m\in\BF^{m\times m}$ Hermitian. In later work with Poluikis \cite{PH81}, Hill showed in addition that $\cL$ is completely positive if a representation as in \eqref{HillRepIntro} exists with $\BH$ positive semidefinite, connecting to the ground breaking work of Choi \cite{C75} (where such representations occur with $\BH=I_m$, where $I_p$ is the identity matrix of size $p \times p.$). We call a representation as in \eqref{HillRepIntro} a {\em Hill representation} of $\cL$ and the matrix $\BH$ defined above the associated {\em Hill matrix}. In the case of a Hill representation of $\cL$ so that the number $m$ is the smallest that can occur,  we speak of a {\em minimal Hill representation}, and in this paper we shall mostly restrict to minimal Hill representations.  Minimality of the Hill representation \eqref{HillRepIntro} implies that the matrices  $A_1,\ldots,A_m$ are linearly independent, which in turn implies, among others, that $\cL$ is $*$-linear if and only if $\BH$ is Hermitian \cite{PH81}.

In the present paper we are interested in, for instance, what matrices $A_1,\ldots,A_m$ can appear in a minimal Hill representation of a $*$-linear matrix map $\cL$ and how do different minimal Hill representations of $\cL$ relate. Moreover, the further analysis conducted in this paper plays an important role in further work on linear matrix maps for which positivity and complete positivity coincide, on which we shall report in a separate paper \cite{tHNpre}.

With the linear matrix map $\cL$ in \eqref{cL-Intro} we associate two matrices, the {\em Choi matrix} $\BL$ given by
\[
\BL=\left[\BL_{ij}\right] \in \BF^{nq \times nq},\quad \BL_{ij}=\cL\left(\cE_{ij}^{(q)}\right) \in\BF^{n \times n},
\]
where $\cE_{ij}^{(q)}$ is the standard basis element in $\BF^{q \times q}$ with a 1 on position $(i,j)$ and zeros elsewhere, and what we call the {\em matricization} of $\cL$, which is the matrix $L\in \BF^{n^2 \times q^2}$ determined by the linear map
\[
L:\BF^{q^2} \to \BF^{n^2},\quad L\,\left(\vect_{q\times q}(V)\right)= \vect_{n \times n}\left(\cL (V)\right),\quad V\in\BF^{q\times q},
\]
where $\vect_{r\times s} :\BF^{r \times s} \to \BF^{rs}$ is the vectorization operator. The matrices $\BL$ and $L$ are related through the matrix reordering of \cite{PH81,OH85} which was mentioned above and is discussed in Section \ref{S:Invol}.

We shall now describe how the minimal Hill representations of $\cL$ can be constructed. Write $L$ as a block matrix
\[
L=\mat{L_{ij}}\quad \mbox{with}\quad L_{ij} \in \BF^{n \times q}.
\]
The minimum value the number $m$ in the Hill representation of $\cL$ can attain is given by
\[
m=\rank\, \BL= \dim \tu{span}\{ L_{ij} \colon i=1,\ldots,n, \, j=1,...,q \} \subset \BF^{n\times q}.
\]
Now select any $L_1,\cdots,L_m\in \BF^{n\times q}$ so that
\begin{equation}\label{SpanL1Lm}
\tu{span}\{L_1,\ldots,L_m\}= \tu{span}\{ L_{ij} \colon i=1,\ldots,n, \, j=1,...,q \}.
\end{equation}
In particular, $L_1,\cdots,L_m$ are linearly independent in $\BF^{n\times q}$.
Thus there exist scalars $\al^{ij}_k$ and $\be_{ij}^k$ so that
\begin{equation}\label{LkLij-intro}
L_k =\sum_{i=1}^n\sum_{j=1}^q \be_{ij}^k L_{ij},\quad L_{ij}=\sum_{k=1}^m \al^{ij}_k L_k.
\end{equation}
Now, for $k=1,\dots,m$ set
\begin{equation}\label{AkBk-intro}
A_k=\left[\ov{\al}_{k}^{ij}\right]\in \BF^{n \times q} \quad \text{and} \quad B_k=\left[\be_{ij}^k\right]\in\BF^{n \times q} \quad \text{for} \quad k=1,\ldots,m,
\end{equation}
and define
\begin{equation}\label{Hill-intro}
\BH=\BH\left(\cL;L_1,\ldots,L_m\right):=\left[\OneVec_n^*\left(B_k \circ \ov{L}_l\right)\OneVec_q\right]_{k,l=1}^m\in\BF^{m \times m},
\end{equation} with $\circ$ indicating the Hadamard product and $\OneVec_p$ the all-one vector of length $p.$
In case $\cL$ is $*$-linear, $\cL$ admits a minimal Hill representation with $A_1,\ldots,A_m$ and $\BH$ as constructed above, and all minimal Hill representations of $\cL$ are of that form, as follows from the next theorem which is our first main result.

\begin{theorem}\label{T:main1}
Let $\cL$ in \eqref{cL-Intro} be a $*$-linear map with Choi matrix $\BL$ and matricization $L$. Set $m=\rank \BL$. Then $\cL$ admits a minimal Hill representation \eqref{HillRepIntro} with $A_1,\ldots,A_m$ and $\BH$ as in \eqref{AkBk-intro} and \eqref{Hill-intro}, respectively, determined by any matrices $L_1,\ldots,L_m$  which satisfy \eqref{SpanL1Lm}. Moreover, all minimal Hill representations of $\cL$ are obtained in this way. Furthermore, the matrices $A_1,\ldots,A_m$ satisfy
\begin{equation}\label{A1Amcon}
\tu{span}\{A_1,\ldots,A_m\}= \tu{span}\{ L_{ij} \colon i=1,\ldots,n, \, j=1,...,q \},
\end{equation}
and conversely, any matrices $A_1,\ldots,A_k$ in $\BF^{n \times q}$ with this property appear in some Hill representation of $\cL$.
\end{theorem}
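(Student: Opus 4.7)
My plan is to reformulate everything through the Choi matrix. Substituting $V=\cE_{ij}^{(q)}$ into a Hill representation and collecting terms gives the compact factorization
\[
\BL \;=\; \sum_{k,l}\BH_{kl}\,\vect(A_l)\vect(A_k)^*,
\]
which shows $\rank\BL\leq m$ with equality precisely when the $A_k$ are linearly independent and $\BH$ is invertible; this both identifies $m=\rank\BL$ as the minimal number and will let me recover a Hill representation from any such factorization by reversing the computation. The second ingredient, which I would read off from the reordering results in Section~\ref{S:Invol}, is the block/entry correspondence $(\BL_{uv})_{p,p'}=(L_{p',v})_{p,u}$ between the blocks of $\BL$ and of $L$; combined with $\BL_{ji}=\BL_{ij}^*$ from $*$-linearity, it delivers the \emph{key identity}
\[
(L_{ij})_{p,u} \;=\; \ov{(L_{p,u})_{ij}} \qquad (i,p\in\{1,\ldots,n\},\ j,u\in\{1,\ldots,q\}),
\]
which is the technical lever doing nearly all the work.

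For the forward construction I start from $L_1,\ldots,L_m$ satisfying \eqref{SpanL1Lm}. The $\al_k^{ij}$ are uniquely determined by linear independence of the $L_k$, while the $\be_{ij}^k$ are not; so I first verify that $\BH$ is independent of the choice of $\be$, noting that if $\sum_{ij}(\be-\be')_{ij}^k L_{ij}=0$ then entrywise $\sum_{ij}(\be-\be')_{ij}^k(L_{ij})_{p,u}=0$, which the key identity turns into $\sum_{ij}(\be-\be')_{ij}^k\ov{(L_{p,u})_{ij}}=0$ and hence, via \eqref{SpanL1Lm}, into the vanishing of the analogous sum against $\ov{(L_l)_{ij}}$. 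Then I verify the Hill formula entrywise: the block/entry correspondence rewrites $(\cL(V))_{p,p'}=\sum_{u,v}V_{u,v}(\BL_{uv})_{p,p'}$ as $\sum_{u,v}V_{u,v}\sum_k\al_k^{p',v}(L_k)_{p,u}$, so the claim reduces to the scalar equation $\sum_l\BH_{kl}\ov{\al_l^{p,u}}=(L_k)_{p,u}$, and substituting the definitions of $\BH_{kl}$ and of $L_k$ collapses this to one application of the key identity summed against $\be_{ij}^k$.

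Minimality now follows from $m=\rank\BL$ together with the factorization above. For \eqref{A1Amcon}, reading the block structure of $L=\sum\BH_{kl}(\ov{A_k}\otimes A_l)$ gives $L_{ij}=\sum_{k,l}\BH_{kl}\ov{(A_k)_{ij}}A_l\in\tu{span}\{A_l\}$, while linear independence of $A_1,\ldots,A_m$ follows from the coordinate vectors $(\al_k^{ij})_k$ spanning $\BF^m$ as $(i,j)$ varies (which is forced by $\tu{span}\{L_{ij}\}$ having dimension $m$); a dimension count converts the inclusion into equality. For uniqueness of the construction, given any minimal Hill representation $(A'_k,\BH')$ I would take $L_k:=\sum_l\BH'_{kl}A'_l$; invertibility of $\BH'$ makes $\{L_k\}$ a basis of $\tu{span}\{A'_k\}=\tu{span}\{L_{ij}\}$, and substituting $A'_l=\sum_{k'}(\BH'^{-1})_{lk'}L_{k'}$ into the block reading of $L$ yields $\al_k^{ij}=\ov{(A'_k)_{ij}}$, hence $A_k=A'_k$; a parallel computation using that $\BH'$ is Hermitian then gives $\BH=\BH'$.

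Finally, for the converse of \eqref{A1Amcon}, given linearly independent $A'_1,\ldots,A'_m$ with $\tu{span}\{A'_k\}=\tu{span}\{L_{ij}\}$, both the column space of $\BL$ and $\tu{span}\{\vect(A'_k)\}$ coincide with the image of $\tu{span}\{L_{ij}\}$ under vectorization (using \eqref{A1Amcon} for the Hill representation already constructed). Writing $X'=[\vect(A'_1)\mid\cdots\mid\vect(A'_m)]$, the equation $\BL=X'H'X'^*$ therefore has a unique solution $H'$ that is automatically Hermitian from $\BL=\BL^*$ and invertible by a rank count, and reversing the Choi-matrix factorization delivers a Hill representation of $\cL$ with the prescribed $A'_k$. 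The main obstacle throughout is isolating the key identity and trusting it to do the heavy lifting; once it is in place, every assertion in the theorem reduces to short entrywise bookkeeping.
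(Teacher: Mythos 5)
Your proposal is correct and follows essentially the same route as the paper: your ``key identity'' $(L_{ij})_{p,u}=\ov{(L_{p,u})_{ij}}$ is exactly the entrywise $*$-linearity characterization of Proposition \ref{P:Herm}/Theorem \ref{T:*-linear} that drives the paper's Proposition \ref{P:*linearChar}, and your Choi-matrix factorization, rank/minimality count, span argument, and recovery of an arbitrary minimal representation via $L_k=\sum_l\BH'_{kl}A'_l$ reproduce Theorem \ref{T:HillRep}, Corollary \ref{C:minimal}, Corollary \ref{C:spanA1Am}, Theorem \ref{T:HillRepChar} and Proposition \ref{P:whatAchar}, written out entrywise rather than through the paper's Kronecker/Hadamard-product identities. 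The one substantive addition is your explicit verification that $\BH$ is independent of the non-unique coefficients $\be_{ij}^k$, a point the paper leaves implicit in the uniqueness statement \eqref{HillMatRel}.
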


Theorem \ref{T:main1} will be proved in Section \ref{S:HillReps} and characterises precisely the matrices $A_1,\ldots,A_m$ that can appear in a minimal Hill representation of a $*$-linear matrix map $\cL$ and provides a construction starting from linearly independent matrices $L_1,\ldots,L_m$ satisfying \eqref{SpanL1Lm}. Conversely, given any $A_1,\ldots,A_m$ satisfying \eqref{A1Amcon}, it is also possible to construct the matrices $L_1,\ldots,L_m$ (and $\BH$) satisfying \eqref{SpanL1Lm} so that $A_1,\ldots,A_m$ appear out of the above construction starting from $L_1,\ldots,L_m$; see Theorem \ref{T:HillRepChar} below.

The characterization of the minimal Hill representations of a $*$-linear map and associated formulas determined in Section \ref{S:HillReps} also make it possible to compare minimal Hill representations. The following theorem, which is our second main result, provides a summary of the result obtained in Section \ref{S:HillReps} in this regard.

\begin{theorem}\label{T:main2}
Let $\cL$ in \eqref{cL-Intro} be a $*$-linear map with Choi matrix $\BL$ and matricization $L$. Set $m=\rank \BL$. Let $L_1,\ldots,L_m$ and $L_1',\ldots,L_m'$ be two sets of matrices for which the spans coincides the span of the blocks entries $L_{ij}$ of $L$. Define $A_1,\ldots,A_k$ and $\BH$  as in \eqref{AkBk-intro} and \eqref{Hill-intro}, respectively, and define $A_1',\ldots,A_k'$ and $\BH'$ correspondingly for the matrices $L_1',\ldots,L_m'$. Then there exists an invertible matrix $\Phi$ in $\BF^{m \times m}$ so that
\begin{equation}\label{LL'AA'forms}
\mat{L_1\\ \vdots \\ L_m}= \left(\Phi \otimes I_n\right) \mat{L_1'\\ \vdots \\ L_m'},\quad \left(\Phi^* \otimes I_n\right) \mat{A_1\\ \vdots \\ A_m}= \mat{A_1'\\ \vdots \\ A_m'},\quad \BH=\Phi \BH'\Phi^*,
\end{equation} with $\otimes$ the Kronecker product.
\end{theorem}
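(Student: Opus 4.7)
The first identity in \eqref{LL'AA'forms} is immediate: since $L_1,\ldots,L_m$ and $L_1',\ldots,L_m'$ are both bases of the single $m$-dimensional subspace $\tu{span}\{L_{ij}\colon i,j\}$, there exists a unique invertible $\Phi=[\Phi_{kl}]\in\BF^{m\times m}$ with $L_k=\sum_{l}\Phi_{kl}L_l'$ for each $k$, and rewriting this block-columnwise gives the claimed Kronecker product identity.

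For the transformation of the $A_k$, the plan is to exploit the uniqueness of the basis expansions $L_{ij}=\sum_k\al^{ij}_k L_k=\sum_k(\al')^{ij}_k L_k'$. Substituting $L_k=\sum_l\Phi_{kl}L_l'$ into the first expansion and matching coefficients against the basis $L_1',\ldots,L_m'$ produces $(\al')^{ij}_l=\sum_k\Phi_{kl}\al^{ij}_k$. Taking entrywise complex conjugates and recalling from \eqref{AkBk-intro} that $(A_k)_{ij}=\ov{\al^{ij}_k}$, this rearranges to $A_l'=\sum_k(\Phi^*)_{lk}A_k$, which is exactly the second identity of \eqref{LL'AA'forms}.

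The third identity I will extract directly from the two Hill representations of $\cL$. Substituting $A_l'=\sum_j(\Phi^*)_{lj}A_j$ and its adjoint into $\cL(V)=\sum_{k,l}\BH'_{kl}A_l'V(A_k')^*$ and collecting terms gives
\[
\cL(V)=\sum_{i,j}\bigl(\Phi\BH'\Phi^*\bigr)_{ij}\,A_jVA_i^*.
\]
Comparing with $\cL(V)=\sum_{i,j}\BH_{ij}A_jVA_i^*$ and invoking the uniqueness of the Hill matrix associated with a fixed linearly independent tuple $A_1,\ldots,A_m$ forces $\BH=\Phi\BH'\Phi^*$. The main obstacle is pinning down this last uniqueness cleanly: it is already used implicitly in Theorem \ref{T:main1} and traces to the linear independence of $\{\ov{A_i}\otimes A_j\}_{i,j=1}^m$ inherited from that of the $A_k$ via the matricization identity $L=\sum_{i,j}\BH_{ij}(\ov{A_i}\otimes A_j)$. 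One could alternatively try to verify the third identity by direct manipulation of \eqref{Hill-intro}, but that route would require extra care, since the coefficients $\be^k_{ij}$ defining the auxiliary matrices $B_k$ are not uniquely pinned down when $nq>m$.
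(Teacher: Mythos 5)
Your proof is correct, but it takes a genuinely different route from the paper's. The paper deduces Theorem \ref{T:main2} from Theorem \ref{T:LkLk'Rel}, where $\Phi$ is \emph{defined explicitly} by the Hadamard-product formula \eqref{PhiXi}, $\Phi=\left[\OneVec_n^*\left(B_k\circ\ov{A}_l'\right)\OneVec_q\right]_{k,l=1}^m$, and the three identities in \eqref{LL'AA'forms} are extracted from a chain of relations among the vectorizations $\whatL,\whatA,\whatL',\whatA'$ (namely $\whatL=\ov{\BH}\whatA$, $\whatL=\ov{\Phi}\whatL'$, $\ov{\Phi}^*\whatA=\whatA'$, $\BH=\Phi\Xi^*$, $\Xi=\Phi\BH'$) established by direct computation from \eqref{LLk} together with Lemma \ref{L:MatTrans}. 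You instead take $\Phi$ to be the abstract change-of-basis matrix between the bases $\{L_k\}$ and $\{L_k'\}$ of $\tu{span}\{L_{ij}\}$, obtain the second identity from uniqueness of coordinates relative to the basis $\{L_l'\}$ (a cleaner derivation than the paper's), and obtain the third by substituting $A_l'=\sum_k(\Phi^*)_{lk}A_k$ into the primed Hill representation and invoking uniqueness of the Hill matrix attached to a fixed linearly independent tuple $A_1,\ldots,A_m$. That uniqueness, which you rightly flag as the one delicate point, does hold: linear independence of $A_1,\ldots,A_m$ gives linear independence of $\{\ov{A}_k\otimes A_l\}_{k,l=1}^m$, and equivalently one can quote Proposition \ref{P:whatAchar}, since $\BL=\whatA^*\BH^T\whatA$ with $\whatA$ of full row rank determines $\BH$. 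What your argument gives up relative to the paper's is the explicit formula \eqref{PhiXi} for $\Phi$ in terms of the data (which the paper advertises as a feature) and the auxiliary identities involving $\Xi$ and the kernels; what it gains is brevity and complete independence from the matrices $B_k$, whose non-uniqueness you correctly sidestep.
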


We shall also prove Theorem \ref{T:main2} in Section \ref{S:HillReps}, together with several other identities that link the matrices associated with two minimal Hill representations of $\cL$. We further point out that the invertible matrix $\Phi$ in Theorem \ref{T:main2} can also be expressed explicitly in terms of the matrices associated with the sets $L_1,\ldots,L_m$ and $L_1',\ldots,L_m'$, namely as in \eqref{PhiXi}.

Finally, while $*$-linearity is easy to characterise in terms of the Choi matrix, that is, $\cL$ is $*$-linear if and only if $\BL$ is Hermitian, the implication of the characterisation in terms of the matricization $L$ is less straightforward. In \cite{PH81} Poluikis and Hill gave a characterisation in terms of the entries of $L$, see Theorem \ref{T:*-linear} and Proposition \ref{P:Herm} below, which we exploit further in several special cases. In particular, the characterisation of Poluikis and Hill implies that many structural properties (e.g., zero patterns, Toeplitz structure, Hankel structure, etc) occur in $L$ at the level of a block matrix, $L=\mat{L_{ij}}$, if and only if they occur at the level of the blocks $L_{ij}$.

The paper is structured as follows. In addition to the current introduction there are five sections. In Section \ref{S:Pre} we provide various preliminaries from matrix analysis used throughout the paper. Sections \ref{S:Invol} and \ref{S:Linear} contain our discussions on the matrix reordering from \cite{PH81,OH85} and representations of linear and $*$-linear maps. These sections are partially a literature review, but also contain various extensions of known results as well as a few new results. The main contributions of this paper are in Section \ref{S:HillReps} which contains the proofs of our main results as well as many other results on minimal Hill representations. In the final section, Section \ref{S:Example} we further explore the implication of $*$-linearity on the structure of the matricization $L$ and determine the Hill matrix $\BH$ more explicitly in case the matrices $L_1,\ldots,L_m$ are selected among the block entries $L_{ij}$ from $L$.

\section{Preliminaries}\label{S:Pre}

In this section we collect some of the notation and elementary matrix analysis results used throughout the paper. The notation is mostly standard and the formulas presented here can be easily verified and appear in most advanced linear algebra textbooks, cf., \cite{HJ85,HJ91}. Throughout $\BF=\BC$ or $\BF=\BR$. We write $\BF^{n \times m}$ for the vector space of $n \times m$ matrices over $\BF$ and $\BF^n$ for the space of all (column) vectors over $\BF$ of length $n$. Occasionally we will identify $\BF^n$ with $\BF^{n \times 1}$, so that matrix operations can be applied to vectors in $\BF^n$. The orthogonal complement of some subset $W\subset \BF^n$, with respect to the Euclidean inner product, is indicated with $W^\perp.$

The standard $j$-th basis element in $\BF^n$ is denoted by $e_j^{(n)}$ or simply $e_j$ when the length is clear from the context. We write $\cE_{ij}^{(n,m)}$ for the standard basis element of $\BF^{n \times m}$ with $1$ on position $(i,j)$ and zeros elsewhere, i.e., $\cE_{ij}^{(n,m)}=e_i^{(n)}e_j^{(m)T}$, abbreviated to $\cE_{ij}^{(n)}$ when $m=n$. With $\OneVec_n$ we indicate the all-one vector of length $n$ and with $\BBone_{n \times m}$ the all-one matrix of size $n \times m$, so that $\BBone_{n \times m}=\OneVec_{n}\OneVec_m^T$. Also here, we write $\BBone_n$ for $\BBone_{n \times n}$. Furthermore, $I_n$ denotes the $n \times n$ identity matrix and $P^{(n)}_{i,j}$ the permutation matrix of size $n \times n$ that interchanges the $i$-th and $j$-th row/ column, abbreviated to $P_{i,j}$ when there can be no confusion about the size.

For $A \in \BF^{n \times m}$ we write $A^T$ for its transpose, $A^*$ for its adjoint, $\overline{A}$ for its complex conjugate, $\kr{A}$ for its nullspace and $\im A$ for its range. We write $\cH_n$ for the $n \times n$ Hermitian matrices and $\cS_n$ for the $n \times n$ symmetric matrices. For $\BF=\BR$, of course, $\cH_n$ and $\cS_n$ coincide. With $A\geq 0$ (resp.\ $A>0$) we indicate that $A$ is positive semidefinite (resp.\ positive definite).

The \emph{vectorization} of a matrix $T \in \BF^{n \times m}$ is the vector $\vect_{n \times m }{(T)} \in \BF^{nm}$ defined as \[\vect_{n \times m}(T)= \sum_{j=1}^m\left(e_j^{(m)} \otimes I_n\right)Te_j^{(m)}.\]
Note that the vectorization operator $\vect_{n \times m}$ defines an invertible linear map from $\BF^{n\times m}$ onto $\BF^{nm}$ whose inverse is given by
\begin{equation}\label{vecinv}
\vect_{n \times m}^{-1}:\BF^{nm}\to \BF^{n\times m},\quad \vect_{n \times m}^{-1}(x)=\sum_{j=1}^m\left(e^{(m)^T}_j \otimes I_n\right) x ,\quad x\in\BF^{nm}.
\end{equation} If $m=n$ we just write $\vect_n$ and $\vect_n^{-1}$ and if the sizes are clear from the context, the indices are often left out. Furthermore, the {\em trace} of a square matrix $A$ in $\BF^{n \times n}$ is denoted as $\trace(A)$. Note that $\trace$ is a linear map from $\BF^{n \times n}$ into $\BF$  which satisfies $\trace(A^T)=\trace(A)$, for $A\in \BF^{n \times n}$, and $\trace(AB)=\trace(BA)$, for $A,B^T\in\BF^{n \times m}$. Moreover, $\BF^{n \times m}$ becomes an inner product space via the trace inner product given by
\begin{equation}\label{traceinner}
\inn{A}{B}_{\BF^{n \times m}}=\trace(AB^*)=\inn{\vect_{n\times m}(A)}{\vect_{n\times m}(B)}_{\BF^{nm}}.
\end{equation}

\noindent The {\em Kronecker product} of matrices $A=\left[a_{ij}\right]\in\BF^{n \times m}$ and $B\in\BF^{k \times l}$ is defined as
\[
A\otimes B=\left[a_{ij}B\right]\in \BF^{(n k) \times (m l)}.
\]
Note that
\[
\left(A \otimes B\right)\left(C \otimes D \right)=(AC) \otimes (BD)
\]
holds for all matrices of appropriate size. Furthermore, we have the identity
\[
\vect\left(A XB^T\right)=(B \otimes A)\vect(X)
\]
which, when $A$ and $B$ are taken to be (transposes of) vectors yields
\begin{equation}\label{VectForm1}
(z \otimes x)^T \vect_{n \times m}(W)=x^T W z, \quad W \in \BF^{n \times m}, x \in \BF^n, z \in \BF^m.
\end{equation}
Next note that
\[
\vect_{n \times m}\left(v w^T\right)=w \otimes v,\quad v\in\BF^n,w\in\BF^m,
\]
from which we obtain that
\begin{equation}\label{UnitVectId1}
\vect_{m \times n}\left(e_l^{(m)}e_k^{(n)^T}\right)= e_k^{(n)} \otimes e_l^{(m)} =e_{(k-1)m +l}^{(nm)}.
\end{equation}
Next, recall that the {\em Hadamard product} of matrices $A=\left[a_{ij}\right], B=\left[b_{ij}\right]\in\BF^{n \times m}$ is defined as
\[
A\circ B=\left[a_{ij}b_{ij}\right]\in\BF^{n \times m}.
\]
Then we have
\begin{equation}\label{HadTens}
 \left(A\otimes B\right) \circ \left(C \otimes D\right)=\left(A \circ C\right) \otimes \left(B\circ D\right).
\end{equation}
Moreover, the trace inner product on $\BF^{n \times m}$ can also be expressed as
\begin{equation}\label{TraceInn}
\trace\left(A B^*\right)=\left\langle A,B\right \rangle_{\BF^{n \times m}} = \OneVec_n^T \left(A \circ \ov{B}\right)\OneVec_m.
\end{equation}
We also point out here the formula for $C=\left[\ga_{ij}\right]\in\BF^{m \times m}$ and $V_{ij}\in\BF^{r \times r}$:
\begin{equation}\label{SumCirc}
\sum_{i,j=1}^m \ga_{ij} V_{ij}= \left(\OneVec_m \otimes I_r\right)^* \left(\left(C \otimes \BBone_r\right) \circ \left[V_{ij}\right]_{i,j=1}^m\right) \left(\OneVec_m \otimes I_r\right),
\end{equation}
which can easily be verified and will be of use in the sequel.

Finally, we define the {\em canonical shuffle} $\fC_n :\BF^{n^2} \to \BF^{n^2}$ which on pure tensors is given by
\[
\fC_n (z \otimes x) = x \otimes z,
\]
and extended to $\BF^{n^2}$ by linearity. The canonical shuffle can also be defined on tensors of vectors of different size, but we will not need that here.  Note that $\fC_n$ is a linear self-invertible map on $\BF^{n^2}$ which also satisfies $\fC_n^*=\fC_n$. Hence the matrix corresponding to $\fC_n$ is a signature matrix (selfadjoint and unitary).

\section{The matrix reordering $\La_{(n,q)}^{(p,r)}$}\label{S:Invol}

In this section we study the following linear map
\begin{align}
&\La_{(n,q)}^{(p,r)}:\BF^{np \times qr}\to \BF^{nq \times pr} , \quad  \La_{(n,q)}^{(r,p)}(S)\left(e_j^{(r)} \otimes e_i^{(p)}\right) =\vect_{n \times q}\left(S_{ij}\right), \label{Lambda}\\
&\mbox{ where }S =\left[S_{ij}\right]\in \BF^{np\times qr},  \mbox{ with } S_{ij}\in\BF^{n\times q} \text{ for } \ 1\le i\le p \text{ and } 1 \le j \le r.\notag
\end{align}
Hence, if $R=\La_{(n,q)}^{(p,r)}(S)\in \BF^{nq \times pr}$, then, using \eqref{UnitVectId1}, we find that
\[
R=\mat{w_1&\cdots& w_{pr}}\quad \mbox{with}\quad w_{(j-1)p+i}= \vect_{n \times q}\left(S_{ij}\right),\ 1 \le i \le p, \quad 1 \le  j \le r.
\]
To the best of our knowledge this map first appeared in the work of Poluikis and Hill \cite{PH81}, with $n=q$ and $p=r$, in their study of completely positive maps, and was later studied together with several other matrix reorderings by Oxenrider and Hill \cite{OH85}, again for $n=q$ and $p=r$. More recently, for the case where $n=p=q=r$, this map appeared in \cite{BG15} in the study of common solutions to Lyapunov equations and it was rediscovered, also with $n=p=q=r$, by Pascoe in \cite{P19}, where it was used to study matrix algebras, and used subsequently in \cite{P19Arx,AJP20Arx}.

Several of the results obtained here have appeared in the literature mentioned above, for the special cases studied there. For the reader's convenience we give proofs of all the results, as they are not very long and often different from the proofs given elsewhere, and indicate where the original results appeared.

It is clear that $\La_{(n,q)}^{(p,r)}$ is a bijective map, with inverse given by
\begin{align}
\left(\La_{(n,q)
 }^{(p,r)}\right)^{-1}(R) & = S \in \BF^{np \times qr}\quad \text{where} \quad S=[S_{ij}] \ \  \mbox{with} \label{LambdaInv}\\ S_{ij}=\vect_{n \times q}^{-1}&\left(R \left(e_j^{(r)} \otimes e_i^{(p)}\right)\right) \in \BF^{n \times q} \text{ and }   R\in \BF^{nq \times pr},\ 1 \le i \le p, \, 1 \le j \le r.\notag
\end{align}
What is less straightforward is that the inverse map is of the same form. In fact, when $n=p=q=r$ then $\La_{(n,n)}^{(n,n)}$ is an involution, i.e., also self-inversive, as was noted in \cite[Page 210]{OH85} and rediscovered in \cite{P19}. The general formula is given in the next proposition; beyond the case $n=p=q=r$ this result does not seem to appear in the literature.

\begin{proposition}
For $ \La^{(p,r)}_{(n,q)}$ defined as in \eqref{Lambda} it follows that $\left( \La^{(p,r)}_{(n,q)}\right)^{-1}= \La^{(q,r)}_{(n,p)}.$
\end{proposition}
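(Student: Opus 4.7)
The plan is to show directly that the composition $\Lambda^{(q,r)}_{(n,p)}\circ\Lambda^{(p,r)}_{(n,q)}$ is the identity on $\BF^{np\times qr}$; since $\Lambda^{(p,r)}_{(n,q)}$ has already been observed to be bijective with inverse given by \eqref{LambdaInv}, this suffices. By linearity it is enough to test the composition on the standard basis $S=e_a^{(np)}e_b^{(qr)T}$ of $\BF^{np\times qr}$. I would write the indices as $a=(i-1)n+s$ with $1\le i\le p$, $1\le s\le n$, and $b=(j-1)q+k$ with $1\le j\le r$, $1\le k\le q$, so that \eqref{UnitVectId1} factorises $e_a^{(np)}=e_i^{(p)}\otimes e_s^{(n)}$ and $e_b^{(qr)}=e_j^{(r)}\otimes e_k^{(q)}$.

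With this factorisation, $S$ fits transparently into the $p\times r$ block decomposition with $n\times q$ blocks demanded by \eqref{Lambda}: its only nonzero block is the $(i,j)$-block, which equals $e_s^{(n)}e_k^{(q)T}$. Applying the defining relation \eqref{Lambda}, the matrix $R:=\Lambda^{(p,r)}_{(n,q)}(S)$ has a single nonzero column, located at position $e_j^{(r)}\otimes e_i^{(p)}$, whose value is $\vect_{n\times q}(e_s^{(n)}e_k^{(q)T})=e_k^{(q)}\otimes e_s^{(n)}$. Hence $R=(e_k^{(q)}\otimes e_s^{(n)})(e_j^{(r)}\otimes e_i^{(p)})^{T}$.

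The next step is to repeat essentially the same calculation with the roles of $p$ and $q$ swapped. Viewing $R$ in the $q\times r$ block decomposition with $n\times p$ blocks that is required by $\Lambda^{(q,r)}_{(n,p)}$, the factored expression for $R$ shows that its only nonzero block sits at position $(k,j)$ and equals $e_s^{(n)}e_i^{(p)T}$. A second application of \eqref{Lambda} then yields a matrix whose only nonzero column lies at $e_j^{(r)}\otimes e_k^{(q)}$ and equals $\vect_{n\times p}(e_s^{(n)}e_i^{(p)T})=e_i^{(p)}\otimes e_s^{(n)}$. Reassembling the rank-one factors gives $(e_i^{(p)}\otimes e_s^{(n)})(e_j^{(r)}\otimes e_k^{(q)})^{T}=e_a^{(np)}e_b^{(qr)T}=S$, which is exactly what is needed.

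The computation itself is short; the only genuine care is in identifying, at each of the two applications of \eqref{Lambda}, the correct block shape forced on the input (first $p\times r$ blocks of size $n\times q$, then $q\times r$ blocks of size $n\times p$), and in not confusing indices ranging over blocks with those ranging within a block. Working throughout with fully factored rank-one basis matrices, rather than with scalar entries through several re-indexings, should make this bookkeeping essentially automatic, and is what I regard as the cleanest way to present the argument.
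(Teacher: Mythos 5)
Your argument is correct, and the logic is sound: since $\La^{(p,r)}_{(n,q)}$ is already known to be a bijection, exhibiting a left inverse identifies it as \emph{the} inverse, and checking the composition on a spanning set of rank-one matrices is legitimate by linearity. The paper proves the same statement by the same basic strategy --- a direct verification that $\La^{(q,r)}_{(n,p)}\circ\La^{(p,r)}_{(n,q)}$ is the identity --- but with different bookkeeping: it runs the computation on a general $S=[S_{ij}]$, writing out all the columns $S_{ij}e^{(q)}_l$ of $\La^{(p,r)}_{(n,q)}(S)$, re-blocking the result into $n\times p$ blocks, and applying the definition a second time. Your reduction to the elementary matrices $S=(e_i^{(p)}\otimes e_s^{(n)})(e_j^{(r)}\otimes e_k^{(q)})^T$ replaces that global re-blocking with a single Kronecker factorisation per step, which makes the identification of the nonzero block after each application of \eqref{Lambda} immediate; in effect you are proving the two identities of Lemma \ref{L:DiadTens} specialised to dyads of standard basis vectors and composing them. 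What the paper's version buys is that it never invokes linearity and yields the explicit blockwise formula $Q_{ij}=S_{ij}$ for arbitrary $S$ along the way; what yours buys is lighter index-chasing, at the cost of having to set up the correspondence $a=(i-1)n+s$, $b=(j-1)q+k$ carefully (which you do correctly, consistently with \eqref{UnitVectId1}).
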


\begin{proof}
[\bf Proof] Take $S=\begin{bmatrix}S_{ij} \end{bmatrix}\in \BF^{np \times qr}$ with $S_{ij} \in \BF^{n \times q}$ and $1 \le i \le p,$ $1 \le j \le r.$ Then \begin{equation*}
\begin{aligned}
R:=\La^{(p,r)}_{(n,q)}(S) &= \begin{bmatrix} S_{11}e^{(q)}_1 & \hdots & S_{p1}e^{(q)}_1 & \hdots &
S_{1r}e^{(q)}_1 & \hdots & S_{pr}e^{(q)}_1 \\ \vdots &  & \vdots &  & \vdots &  & \vdots \\ S_{11}e^{(q)}_q & \hdots & S_{p1}e^{(q)}_q & \hdots &
S_{1r}e^{(q)}_q & \hdots & S_{pr}e^{(q)}_q \end{bmatrix} \end{aligned}.
\end{equation*}
Hence $R=[R_{ij}]\in \BF^{nq \times pr}$ with $R_{ij}\in \BF^{n \times p}$ given by $R_{ij}=\mat{S_{1j}e^{(q)}_i & \hdots & S_{pj}e^{(q)}_i}$.
Then
\[
Q:= \Lambda_{(n,p)}^{(q,r)}\left(\La^{(p,r)}_{(n,q)}(S)\right)=\Lambda_{(n,p)}^{(q,r)}\left(R\right)
\]
has the form $Q=[Q_{ij}]\in \BF^{np \times qr}$ with $Q_{ij}\in \BF^{n \times q}$ given by
\begin{align*}
Q_{ij} &=\mat{R_{1j}e^{(p)}_i & \hdots & R_{qj}e^{(p)}_i}
=\mat{S_{ij}e^{(q)}_1 & \hdots & S_{ij}e^{(q)}_q}=S_{ij}.
\end{align*}
Hence $S=\Lambda_{(n,p)}^{(q,r)}\left(\La^{(p,r)}_{(n,q)}(S)\right)$ for all $S\in \BF^{np \times qr}$.
\end{proof}

Thus the inverse of the bijective map $\La^{(p,r)}_{(n,q)}$ is also given by \begin{equation*}
    \begin{aligned}
    \Lambda_{(n,p)}^{(q,r)}(R)&=S \in \BF^{np \times qr} \text{ where } R=\begin{bmatrix}R_{kj} \end{bmatrix}\in \BF^{nq \times pr}, \text{ with } \\    R_{kj}&=\vect_{n \times p}^{-1}\left(S\left(e_j^{(r)} \otimes e_{k}^{(q)}\right)\right)\in \BF^{n \times p} \text{ for } 1\le k \le q \text{ and } 1 \le j \le r.
    \end{aligned}
\end{equation*}

\begin{lemma}\label{L:DiadTens}
The following identities hold:
\begin{equation}\label{InvolId1}
\begin{aligned}
\La_{(n,q)}^{(p,r)}\left(\vect_{n\times p}(B)\vect_{q \times r}(A)^T\right)&=A\otimes B,\quad A\in\BF^{q \times r}, \,B\in \BF^{n\times p};\\
\La_{(n,q)}^{(p,r)}\left(A\otimes B\right)=\vect_{n \times q}(B)&\vect_{p \times r}(A)^T,\quad A\in \BF^{p\times r}, \, B \in \BF^{n \times q}.
\end{aligned}
\end{equation}
\end{lemma}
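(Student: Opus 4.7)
The plan is to observe that both identities are bilinear in $(A,B)$ and hence reduce to verifying them on pairs of standard basis matrices. Indeed, the Kronecker product $A\otimes B$ is bilinear, the outer product $\vect(B)\vect(A)^T$ is bilinear, and $\La_{(n,q)}^{(p,r)}$ is linear by definition, so it suffices to check each identity when $A$ and $B$ are standard basis elements $\cE_{kl}$ and $\cE_{st}$ of appropriate sizes.

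For the first identity, I would take $A=\cE_{kl}^{(q,r)}$ and $B=\cE_{st}^{(n,p)}$, and use \eqref{UnitVectId1} to write $\vect_{n\times p}(B)=e_{(t-1)n+s}^{(np)}$ and $\vect_{q\times r}(A)=e_{(l-1)q+k}^{(qr)}$. Then $S:=\vect_{n\times p}(B)\vect_{q\times r}(A)^T\in\BF^{np\times qr}$ is the matrix with a single $1$ at position $((t-1)n+s,(l-1)q+k)$. Partitioning $S$ into $n\times q$ blocks $S_{ij}$ with $1\le i\le p$, $1\le j\le r$, the nonzero entry falls in block $(t,l)$ at its internal position $(s,k)$, so $S_{tl}=\cE_{sk}^{(n,q)}$ and all other $S_{ij}=0$. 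Applying $\La_{(n,q)}^{(p,r)}$ via \eqref{Lambda}, only column $(l-1)p+t$ of the output is nonzero, and it equals $\vect_{n\times q}(\cE_{sk}^{(n,q)})=e_{(k-1)n+s}^{(nq)}$. A direct entrywise check using the block formula $A\otimes B=[a_{\alpha\beta}B]$ shows $\cE_{kl}^{(q,r)}\otimes\cE_{st}^{(n,p)}$ has its unique nonzero entry at row $(k-1)n+s$ and column $(l-1)p+t$, giving exactly the same matrix.

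For the second identity, the cleanest route is to apply the inverse $\La_{(n,p)}^{(q,r)}=\bigl(\La_{(n,q)}^{(p,r)}\bigr)^{-1}$ (established in the preceding proposition) to both sides of the first identity, obtaining
\[
\vect_{n\times p}(B)\vect_{q\times r}(A)^T=\La_{(n,p)}^{(q,r)}(A\otimes B),\qquad A\in\BF^{q\times r},\ B\in\BF^{n\times p},
\]
and then relabel $(p,q)\leftrightarrow(q,p)$ together with $A\leftrightarrow A'$, $B\leftrightarrow B'$ to recover the stated formula. Alternatively one can repeat the basis-element computation directly: with $A=\cE_{kl}^{(p,r)}$ and $B=\cE_{st}^{(n,q)}$, the block $(k,l)$ of $A\otimes B$ (partitioned into $n\times q$ blocks) equals $\cE_{st}^{(n,q)}$, which under $\La_{(n,q)}^{(p,r)}$ produces a matrix whose only nonzero column is $(l-1)p+k$, equal to $\vect_{n\times q}(\cE_{st}^{(n,q)})=e_{(t-1)n+s}^{(nq)}$, matching $\vect_{n\times q}(B)\vect_{p\times r}(A)^T$ computed via \eqref{UnitVectId1}.

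The only real obstacle is keeping the four index pairs $(n,p)$, $(q,r)$, $(p,r)$, $(n,q)$ straight and matching the two different conventions (row-major within a Kronecker block versus column-major within $\vect$); no substantial idea beyond this bookkeeping is needed.
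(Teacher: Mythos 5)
Your proposal is correct. It follows the same basic strategy as the paper --- verifying a (bi)linear identity on a spanning set --- but pushes the reduction one level further than the paper does: the paper keeps $A$ and $B$ general, decomposes them into columns (resp.\ scalar entries), observes that the blocks of $S=\vect_{n\times p}(B)\vect_{q\times r}(A)^T$ are $S_{ij}=B_iA_j^T$, and then uses $\vect_{n\times q}(B_iA_j^T)=A_j\otimes B_i$ to read off both sides acting on $e_j\otimes e_i$; this avoids almost all of the scalar index bookkeeping that your elementary-matrix computation requires (and which you rightly flag as the main hazard). Your route for the second identity via $\bigl(\La_{(n,q)}^{(p,r)}\bigr)^{-1}=\La_{(n,p)}^{(q,r)}$ plus relabelling $(p,q)\leftrightarrow(q,p)$ is a genuine and clean alternative to the paper's direct computation with $S_{ij}=a_{ij}B$; it is exactly the generalization of the remark the paper itself makes for the square case, and it buys you the second identity for free from the first, at the cost of invoking the preceding inversion proposition. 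Both of your index computations check out (the nonzero entry of $\cE_{kl}^{(q,r)}\otimes\cE_{st}^{(n,p)}$ does sit at row $(k-1)n+s$, column $(l-1)p+t$, matching the single nonzero column $\vect_{n\times q}(\cE_{sk}^{(n,q)})$ placed at column $(l-1)p+t$), so there is no gap --- just a more laborious path to the same destination for the first identity.
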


The first of the identities in \eqref{InvolId1} was proved in \cite[Theorem 3]{OH85}, while the second was proved in \cite{P19} for the case $n=p=q=r$, so that it also follows from the first and the fact that $\La^{(n,n)}_{(n,n)}$ is an involution.

\begin{proof}[\bf Proof]
It suffices to check the identities on vectors of the form $e_j\otimes e_i=e_j^{(r)} \otimes e_i^{(p)}$. For the first identity, set $S=\vect_{n \times p}(B)\vect_{q \times r}(A)^T$ for $A\in\BF^{q \times r}$ and $B\in \BF^{n\times p}$ and write
\[
A=\mat{A_1&\cdots&A_r},\ A_i\in\BF^q,\quad B=\mat{B_1&\cdots&B_p},\ B_j\in\BF^n.
\]
Then $S_{ij}=B_i A_j^T$ and thus
\[
\La_{(n,q)}^{(p,r)}(S) \left(e_j \otimes e_i\right)
=\vect_{n \times q}\left(B_i A_j^T\right) = A_j \otimes B_i = \left(A\otimes B\right)  \left(e_j \otimes e_i\right).
\]

For the second identity, take $S=A\otimes B$ for $A\in \BF^{p\times r}$ and $B \in \BF^{n \times q}$. Then $S_{ij}=a_{ij}B$, and thus
\begin{align*}
\La_{(n,q)}^{(p,r)}(S) \left(e_j \otimes e_i\right) &= \vect_{n \times q}\left(a_{ij}B\right) =\vect_{n \times q}(B) a_{ij} \\ &=\vect_{n \times q}(B)\vect_{p \times r}(A)^T \left(e_j \otimes e_i\right).\qedhere
\end{align*}
\end{proof}

The identity obtained in the following lemma does not seem to appear in the literature, although it resembles an identity in \cite[Proposition 2.1]{P19Arx}, from which it can be proved, at least for the case $n=p=q=r$.

\begin{lemma}\label{L:InvolId}
For $S\in\BF^{np \times qr}$ we have
\begin{align}\label{InvolId3}
  \left(z^\prime\otimes x^\prime\right)^T \La_{(n,q)}^{(p,r)}(S) \left(z \otimes x\right) & = \left(x\otimes x^\prime\right)^T S \left(z \otimes z^\prime\right),\\ \text{for } z&\in \BF^r, \, z^\prime\in\BF^q, \, x \in \BF^p,\,x^{\prime} \in \BF^{n}. \nonumber
\end{align}
In particular, we have
\begin{align}\label{InvolId4}
 \left(z^\prime \otimes I_n\right)^T \La_{(n,q)}^{(p,r)}(S) \left(I_r \otimes x\right) & = \left(x\otimes I_n\right)^T S \left(I_r \otimes z^\prime\right),\quad z^\prime\in\BF^q \text{ and } x \in \BF^p.
\end{align}
\end{lemma}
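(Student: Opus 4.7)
Both sides of \eqref{InvolId3} are multilinear (indeed, linear in each of the four vectors $z,z',x,x'$ separately), so it suffices to verify the identity on tuples of standard basis vectors. Set
\[
z=e_j^{(r)},\quad x=e_i^{(p)},\quad z'=e_k^{(q)},\quad x'=e_l^{(n)},
\]
with $1\le i\le p$, $1\le j\le r$, $1\le k\le q$, $1\le l\le n$.

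For the left-hand side, the defining relation \eqref{Lambda} gives
\[
\La_{(n,q)}^{(p,r)}(S)\bigl(z\otimes x\bigr)= \La_{(n,q)}^{(p,r)}(S)\bigl(e_j^{(r)}\otimes e_i^{(p)}\bigr) =\vect_{n\times q}(S_{ij}),
\]
and then \eqref{VectForm1} (applied with $W=S_{ij}$) yields
\[
(z'\otimes x')^T \vect_{n\times q}(S_{ij}) = (e_k^{(q)}\otimes e_l^{(n)})^T \vect_{n\times q}(S_{ij})= (e_l^{(n)})^T S_{ij}\, e_k^{(q)}=(S_{ij})_{l,k}.
\]
For the right-hand side, write $S=[S_{ij}]$ with $S_{ij}\in\BF^{n\times q}$. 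The entry $(S_{ij})_{l,k}$ sits in row $(i-1)n+l$ and column $(j-1)q+k$ of $S$, and $e_i^{(p)}\otimes e_l^{(n)}\in\BF^{pn}$, $e_j^{(r)}\otimes e_k^{(q)}\in\BF^{rq}$ are precisely the corresponding standard basis vectors. Hence
\[
(x\otimes x')^T S\,(z\otimes z')= \bigl(e_i^{(p)}\otimes e_l^{(n)}\bigr)^T S\,\bigl(e_j^{(r)}\otimes e_k^{(q)}\bigr)=(S_{ij})_{l,k},
\]
matching the left-hand side. By multilinearity, \eqref{InvolId3} follows for all $z,z',x,x'$.

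Finally, \eqref{InvolId4} is the entry-wise reformulation of \eqref{InvolId3} with $x'$ and $z$ free. Both sides of \eqref{InvolId4} are matrices in $\BF^{n\times r}$, and one checks by direct dimension counting that the $(l,j)$-entry of the left-hand side equals
\[
\bigl(e_l^{(n)}\bigr)^T(z'\otimes I_n)^T\La_{(n,q)}^{(p,r)}(S)(I_r\otimes x)e_j^{(r)} = \bigl(z'\otimes e_l^{(n)}\bigr)^T \La_{(n,q)}^{(p,r)}(S)\bigl(e_j^{(r)}\otimes x\bigr),
\]
while the $(l,j)$-entry of the right-hand side equals
\[
\bigl(x\otimes e_l^{(n)}\bigr)^T S\,\bigl(e_j^{(r)}\otimes z'\bigr).
\]
Equality of these two expressions is exactly \eqref{InvolId3} applied with $x'=e_l^{(n)}$ and $z=e_j^{(r)}$, completing the proof.

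The only real obstacle is the bookkeeping of block versus entry indices: one has to be careful to match the partition $S=[S_{ij}]_{i=1,j=1}^{p,r}$ with $S_{ij}\in\BF^{n\times q}$ against the Kronecker ordering $e_i^{(p)}\otimes e_l^{(n)}$ for rows and $e_j^{(r)}\otimes e_k^{(q)}$ for columns, so that the index $(i,j)$ parametrises blocks and $(l,k)$ indexes within a block. Once the indexing is set up consistently, both sides collapse to $(S_{ij})_{l,k}$ and the identity is immediate.
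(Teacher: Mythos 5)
Your proposal is correct and follows essentially the same route as the paper: both rest on the defining relation \eqref{Lambda} together with \eqref{VectForm1}, the only cosmetic difference being that you reduce to standard basis vectors and match entries $(S_{ij})_{l,k}$ by positional bookkeeping, whereas the paper computes $\La_{(n,q)}^{(p,r)}(S)(z\otimes x)=\vect_{n\times q}\left((x\otimes I_n)^T S (z\otimes I_q)\right)$ for general $z,x$ and then applies \eqref{VectForm1} once more. Your index verification and the entrywise derivation of \eqref{InvolId4} are both accurate.
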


\begin{proof}[\bf Proof]
Let $S=[S_{ij}]\in\BF^{np \times qr}$ with $S_{ij}\in\BF^{n \times q}$, where $1 \le i \le p$, $1 \le j \le r$, and $z \in \BF^{r}$, $z^\prime\in\BF^q$, $x\in\BF^{p}$, $x^\prime \in \BF^n$. Then
\begin{align*}
\La_{(n,q)}^{(p,r)}(S)(z\otimes x) & = \sum_{i=1}^p\sum_{j=1}^r x_i z_j \vect_{n\times q}\left(S_{ij}\right)= \vect_{n \times q} \left( \sum_{i=1}^p\sum_{j=1}^r x_i z_j S_{ij} \right)\\
&=\vect_{n \times q}\left(\left(x\otimes I_n\right)^T S \left(z\otimes I_q\right)\right).
\end{align*}
Using \eqref{VectForm1} we obtain
\begin{align*}
\left(z^\prime\otimes x^\prime\right)^T\La_{(n,q)}^{(p,r)}(S)(z\otimes x) & = \left(z^\prime\otimes x^\prime\right)^T \vect_{n \times q}\left(\left(x\otimes I_n\right)^T S \left(z\otimes I_q\right)\right)\\
&= \left(x\otimes x^\prime \right)^T S \left(z\otimes z^\prime \right).
\end{align*}
Hence \eqref{InvolId3} holds. Then \eqref{InvolId4} follows since $x^\prime \in\BF^n$ and $z \in \BF^r$ were taken arbitrarily.
\end{proof}

In terms of the scalar entries, the relation between $S$ and $\La_{(n,q)}^{(p,r)}(S)$ is as described in the following corollary; for $n=q$ and $p=r$ it follows directly from Theorem 1 in \cite{OH85}.

\begin{corollary}\label{C:EntryDescr}
Let $S\in\BF^{np \times qr}$ and $R \in \BF^{nq \times pr}$ with $S=\left[S_{ij}\right]$ and $R=\left[R_{lj}\right]$, where $S_{ij}=\left[v_{kl}^{ij}\right]\in\BF^{n \times q}$ and $R_{lj}=\left[w_{ki}^{lj}\right] \in\BF^{n \times p}$ for $1 \le i\le p$, $1 \le j \le r$. Then
\[
R=\La_{(n,q)}^{(p,r)}(S)\quad \Longleftrightarrow \quad
v_{kl}^{ij}=w_{ki}^{lj},\ \ 1 \le i \le p, \, 1 \le j \le r, \, 1 \le k \le n, \, 1 \le l \le q.
\]
\end{corollary}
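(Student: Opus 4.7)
My plan is to derive the entrywise correspondence directly from Lemma \ref{L:InvolId}, specifically identity \eqref{InvolId3}. The first step is to recognise that each scalar entry can be written as a Kronecker-product sandwich of $S$ (respectively $R$) between standard basis vectors, in the ordering consistent with the block decompositions. Given the block partition $R = [R_{lj}]$ with $R_{lj} \in \BF^{n \times p}$ indexed by $1 \le l \le q$ and $1 \le j \le r$, the $(k,i)$-entry of $R_{lj}$ sits at position $((l-1)n+k,\,(j-1)p+i)$ of $R$, so
\[
w_{ki}^{lj} = \left(e_l^{(q)} \otimes e_k^{(n)}\right)^T R \left(e_j^{(r)} \otimes e_i^{(p)}\right),
\]
and analogously, for $S = [S_{ij}]$ with $S_{ij} \in \BF^{n \times q}$,
\[
v_{kl}^{ij} = \left(e_i^{(p)} \otimes e_k^{(n)}\right)^T S \left(e_j^{(r)} \otimes e_l^{(q)}\right).
\]

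Assuming $R = \La_{(n,q)}^{(p,r)}(S)$, I would then apply \eqref{InvolId3} with $z = e_j^{(r)}$, $x = e_i^{(p)}$, $z^\prime = e_l^{(q)}$ and $x^\prime = e_k^{(n)}$ (which lie in the correct ambient spaces $\BF^r, \BF^p, \BF^q, \BF^n$ required by Lemma \ref{L:InvolId}), obtaining at once $w_{ki}^{lj} = v_{kl}^{ij}$ for every admissible tuple $(i,j,k,l)$, which is the forward implication. Conversely, if $v_{kl}^{ij} = w_{ki}^{lj}$ for all valid indices, then by the two display formulas above and the same application of \eqref{InvolId3} to $\La_{(n,q)}^{(p,r)}(S)$, the matrices $R$ and $\La_{(n,q)}^{(p,r)}(S)$ agree in every entry and hence coincide.

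The only real obstacle is index bookkeeping: one has to align the order of the Kronecker factors carefully so that the block-position pair $(i,j)$ of $S$ and the block-position pair $(l,j)$ of $R$ are permuted correctly in passing through $\La_{(n,q)}^{(p,r)}$, while the inner-block indices $(k,l)$ of $S_{ij}$ and $(k,i)$ of $R_{lj}$ exchange roles with one of the outer block indices. Once this matching is set up, no further computation is needed and the corollary is a one-line consequence of Lemma \ref{L:InvolId}.
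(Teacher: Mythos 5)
Your proposal is correct and follows essentially the same route as the paper: the paper's proof also writes $v_{kl}^{ij}$ and $w_{ki}^{lj}$ as Kronecker sandwiches between standard basis vectors and applies the identity \eqref{InvolId3} of Lemma \ref{L:InvolId} with exactly the specialisation $z=e_j^{(r)}$, $x=e_i^{(p)}$, $z^\prime=e_l^{(q)}$, $x^\prime=e_k^{(n)}$, concluding by varying the indices. Your index bookkeeping is consistent with the paper's conventions, so no gap remains.
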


\begin{proof}[\bf Proof]
For $1 \le i\le p$, $1 \le j \le r$, $1 \le k \le n$ and $1 \le l \le q$ we have
\begin{equation*} \begin{aligned}
v_{kl}^{ij}=\left(e_k^{(n)}\right)^T S_{ij}e_l^{(q)}&=\left(e_i^{(p)} \otimes e_k^{(n)}\right)^T S\left(e_j^{(r)}\otimes e_l^{(q)}\right) \\
&= \left(e_l^{(q)} \otimes e_k^{(n)}\right)^T \La_{(n,q)}^{(p,r)}(S)\left(e_j^{(r)}\otimes e_i^{(p)}\right),
\end{aligned} \end{equation*}
and likewise $w_{{ki}}^{lj}=\left(e_l^{(q)} \otimes e_k^{(n)}\right)^T R\left(e_j^{(r)}\otimes e_i^{(p)}\right)$. Thus $v_{kl}^{ij}= w_{{ki}}^{lj}$ is the same as
\[
\left(e_l^{(q)} \otimes e_k^{(n)}\right)^T \La_{(n,q)}^{(p,r)}(S)\left(e_j^{(r)}\otimes e_i^{(p)}\right)=\left(e_l^{(q)} \otimes e_k^{(n)}\right)^T R\left(e_j^{(r)}\otimes e_i^{(p)}\right).
\]
The result follows by varying $i,j,k,l$.
\end{proof}

As a consequence, for the case where $p=n$ and $r=q$, we obtain the following characterization of when $\La_{(n,q)}^{(n,q)}(S)\in\cH_{nq}$; see Theorem 2 in \cite{PH81} for the characterization in terms of the matrix entries.

\begin{proposition}\label{P:Herm}
Let $S\in\BF^{n^2 \times q^2}$ with $S=\left[S_{ij}\right]$ where $S_{ij}=\left[v_{kl}^{ij}\right] \in \BF^{n \times q}$ for $v_{kl}^{ij}\in\BF$, $1 \le i,k \le n$ and $1 \le j,l \le q$. Then $\La_{(n,q)}^{(n,q)}(S)\in\cH_{nq}$ if and only if
\begin{equation}\label{HermChar1}
\ov{S}=\fC_{n} S \fC_q,
\end{equation}
or, equivalently,
\begin{equation}\label{HermChar2}
v_{kl}^{ij}=\ov{v}_{ij}^{kl},\quad 1 \le i,k \le n \quad  \text{and} \quad 1 \le j,l \le q.
\end{equation}
\end{proposition}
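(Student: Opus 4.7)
The plan is to reduce Hermiticity of $R:=\La_{(n,q)}^{(n,q)}(S)$ to an entrywise condition on the blocks of $S$ via Corollary \ref{C:EntryDescr}, and then to reinterpret that condition as the matrix identity \eqref{HermChar1} by unpacking how $\fC_n$ and $\fC_q$ permute the tensor basis vectors.

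For the first step, I would write $R=[R_{lj}]_{l,j=1}^q$ with $R_{lj}=[w_{ki}^{lj}]\in\BF^{n\times n}$. Hermiticity of $R$ amounts to $R_{lj}=R_{jl}^*$ for every $l,j$, i.e., $w_{ki}^{lj}=\ov{w_{ik}^{jl}}$ for all $1\le i,k\le n$ and $1\le j,l\le q$. Corollary \ref{C:EntryDescr}, applied with $(p,r)=(n,q)$, gives $w_{ki}^{lj}=v_{kl}^{ij}$ and, after renaming indices in the same identity, $w_{ik}^{jl}=v_{ij}^{kl}$. Substituting these, Hermiticity of $R$ becomes exactly \eqref{HermChar2}.

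For the equivalence of \eqref{HermChar2} with \eqref{HermChar1}, I would parameterize the row space $\BF^{n^2}$ of $S$ by ordered pairs $(i,k)$ via the identification $e_i^{(n)}\otimes e_k^{(n)}\leftrightarrow (i,k)$, and similarly parameterize the column space $\BF^{q^2}$ by pairs $(j,l)$. Under this identification, the defining rule $\fC_n(z\otimes x)=x\otimes z$ makes $\fC_n$ the permutation matrix sending $(i,k)\mapsto(k,i)$, and analogously for $\fC_q$. Consequently the $((i,k),(j,l))$-entry of $\fC_n S \fC_q$ equals the $((k,i),(l,j))$-entry of $S$, which by construction is $v_{ij}^{kl}$, while the same entry of $\ov{S}$ is $\ov{v_{kl}^{ij}}$; so the matrix equation \eqref{HermChar1} is precisely the scalar identity \eqref{HermChar2}. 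The one thing requiring attention is keeping the two labelling conventions aligned — the block-and-entry labels on $S$ versus the tensor-factor labels governing the action of $\fC_n$ and $\fC_q$ — but once the dictionary $(i,k)\leftrightarrow (i-1)n+k$ is in place, no further ingredient is needed.
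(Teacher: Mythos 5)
Your proposal is correct and follows essentially the same route as the paper: both halves reduce Hermiticity of $R$ to \eqref{HermChar2} via Corollary \ref{C:EntryDescr} and then identify \eqref{HermChar2} with \eqref{HermChar1} by tracking how $\fC_n$ and $\fC_q$ permute the tensor basis. The only (cosmetic) difference is that you verify the second equivalence by computing the $((i,k),(j,l))$-entries of $\ov{S}$ and $\fC_n S\fC_q$ directly as permutation matrices, whereas the paper tests the associated bilinear forms on basis vectors and extends by linearity to pure tensors; both computations are the same in substance.
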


\begin{proof}[\bf Proof]
For $R=\La_{(n,q)}^{(n,q)}(S)$ as in Corollary \ref{C:EntryDescr} we have $R=R^*$ if and only $w_{ik}^{jl}=\ov{w}_{ki}^{lj}$. Via the entrywise characterization of the relation $R=\La_{(n,q)}^{(n,q)}(S)$  in Corollary \ref{C:EntryDescr} it follows that $\La_{(n,q)}^{(n,q)}(S)=\La_{(n,q)}^{(n,q)}(S)^*$ if and only if
\[
v_{kl}^{ij}= w_{ki}^{lj}=\ov{w}_{ik}^{jl}=\ov{v}_{ij}^{kl} \quad
\text{for all} \quad 1 \le i,k \le n \quad  \text{and} \quad 1 \le j,l \le q.
\]
Hence we see that $\La_{(n,q)}^{(n,q)}(S)\in\cH_{nq}$ is equivalent to \eqref{HermChar2}. It remains to prove the equivalence of \eqref{HermChar1} and \eqref{HermChar2}. Note that \eqref{HermChar2} can be rewritten as
\begin{equation}\label{HermChar3}
\left(e_i^{(n)} \otimes e_k^{(n)}\right)^T \!\! S \!\left(e_j^{(q)}\otimes e_l^{(q)}\right)\! =\! v_{kl}^{ij}\! =\ov{v}_{ij}^{kl}\! =\! \left(e_k^{(n)} \otimes e_i^{(n)}\right)^T\ov{S}\left(e_l^{(q)}\otimes e_j^{(q)}\right).
\end{equation}
Writing $x,z\in\BF^q$ and $x^\prime,z^\prime \in \BF^n$ as linear combinations of its basis vectors, it follows that \eqref{HermChar3} implies
\begin{equation}\label{HermChar4}
\left(z^\prime \otimes x^\prime \right)^T S\left(z\otimes x\right) = \left(x^\prime \otimes z^\prime\right)^T\ov{S}\left(x\otimes z\right),\quad x,z\in\BF^q \quad \text{and} \quad x^\prime,z^\prime \in \BF^{n},
\end{equation}
while, conversely, \eqref{HermChar3} follows from \eqref{HermChar4} by specifying $x,x^\prime,z,z^\prime$ as basis vectors. Note that the right hand side in \eqref{HermChar4} can be rewritten as
\begin{align*}
 \left(x^\prime \otimes z^\prime\right)^T\ov{S}(x\otimes z)
 &  = \left(z^\prime \otimes x^\prime\right)^T\fC_n \ov{S} \fC_q (z\otimes x).
\end{align*}
Since all vectors in $\BF^{n^2}=\BF^n \otimes \BF^n$ and $\BF^{q^2}=\BF^q \otimes \BF^q$ can be written as sums of pure tensors, it follows that \eqref{HermChar1} and \eqref{HermChar2} are equivalent.
\end{proof}

The next lemma explains how $\La_{(n,q)}^{(p,r)}$ behaves with respect to interchanging of (block) rows and (block) columns. This result also follows directly from Lemma \ref{L:InvolId}; see also the 4-modularity property in \cite[Proposition 2.1]{P19Arx}. Recall that $P_{i,j}$ is the permutation matrix that interchanges the $i$-th and $j$-th row/column, to be interpreted as the identity matrix in case $i=j$.

\begin{lemma}\label{L:BlockPerms}
For $S\in\BF^{np \times qr}$ and $1\leq i_1,i_2 \leq n$, $1\leq j_1,j_2 \leq q$, $1\leq k_1,k_2 \leq p$, $1\leq l_1,l_2 \leq r$ we have
\[
\La_{(n,q)}^{(p,r)}\left(\left(P_{i_1,i_2} \otimes P_{k_1,k_2}\right) S \left( P_{j_1,j_2} \otimes P_{l_1,l_2}\right)\right)
= \left(P_{i_1,i_2} \otimes P_{j_1,j_2}\right) \La_{(n,q)}^{(p,r)}(S) \left(P_{k_1,k_2} \otimes P_{l_1,l_2}\right).
\]
\end{lemma}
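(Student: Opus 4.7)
The plan is to prove the asserted matrix identity by testing both sides against arbitrary pure tensors, with Lemma \ref{L:InvolId} serving as the bridge between bilinear forms on $\La_{(n,q)}^{(p,r)}(\cdot)$ and bilinear forms on the original matrix. Concretely, fix $z\in\BF^r$, $x\in\BF^p$, $z'\in\BF^q$, $x'\in\BF^n$ and evaluate both sides of the identity against $(z'\otimes x')^T \,\cdot\,(z\otimes x)$. Since pure tensors of these forms span $\BF^{nq}$ and $\BF^{pr}$ respectively, establishing the resulting scalar identity for all such $z,x,z',x'$ forces the matrix identity.

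For the left-hand side, I would first invoke Lemma \ref{L:InvolId} with the sandwiched matrix $M:=(P_{i_1,i_2}\otimes P_{k_1,k_2})\,S\,(P_{j_1,j_2}\otimes P_{l_1,l_2})$ playing the role of $S$, rewriting $(z'\otimes x')^T \La_{(n,q)}^{(p,r)}(M)(z\otimes x)$ as $(x\otimes x')^T M (z\otimes z')$. Then, using the standard Kronecker identities $(A\otimes B)(u\otimes v)=(Au)\otimes(Bv)$ and $(u\otimes v)^T(A\otimes B)=((A^T u)\otimes (B^T v))^T$, together with the fact that each transposition satisfies $P^T=P$, I would absorb the four permutation matrices into the test vectors to obtain a pairing on $S$ alone of the form $(\tilde{x}\otimes \tilde{x}')^T S (\tilde{z}\otimes \tilde{z}')$, where the tilded vectors are the original test vectors premultiplied by the appropriate transposition.

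For the right-hand side, I would perform the analogous absorption, pulling $(P_{i_1,i_2}\otimes P_{j_1,j_2})$ into $(z'\otimes x')^T$ and $(P_{k_1,k_2}\otimes P_{l_1,l_2})$ into $(z\otimes x)$ via the same Kronecker rules, and then apply Lemma \ref{L:InvolId} in the reverse direction to convert the resulting pairing with $\La_{(n,q)}^{(p,r)}(S)$ back into a pairing with $S$. After these manipulations, both sides are bilinear forms of the shape $(\cdot)^T S(\cdot)$ with transposed-permuted test vectors, and the identity collapses to the coincidence of those modified tensors.

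The main obstacle will be the combinatorial bookkeeping of which permutation acts on which tensor slot under the swap between $S$ and $\La_{(n,q)}^{(p,r)}(S)$ encoded in Lemma \ref{L:InvolId}: one has to verify that the grouping $(P_{i_1,i_2}\otimes P_{k_1,k_2})\cdot(P_{j_1,j_2}\otimes P_{l_1,l_2})$ flanking $S$ regroups into $(P_{i_1,i_2}\otimes P_{j_1,j_2})\cdot(P_{k_1,k_2}\otimes P_{l_1,l_2})$ flanking $\La_{(n,q)}^{(p,r)}(S)$, which is exactly where the exchange of the $\BF^p$-slot and the $\BF^q$-slot between the rows and columns of $S$ and of $\La_{(n,q)}^{(p,r)}(S)$ (dictated by Lemma \ref{L:InvolId}) plays out.
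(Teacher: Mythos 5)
Your proposal is correct and is exactly the argument the paper has in mind: the paper gives no written proof of Lemma \ref{L:BlockPerms}, remarking only that it ``follows directly from Lemma \ref{L:InvolId}'', and your plan --- test both sides against pure tensors, absorb the (symmetric) transposition matrices into the test vectors via $(A\otimes B)(u\otimes v)=(Au)\otimes(Bv)$, and apply Lemma \ref{L:InvolId} in both directions --- is precisely that derivation. The one caution is the bookkeeping you already flag as the main obstacle: carrying the computation out determines the size of each $P_{\cdot,\cdot}$ and the order of the Kronecker factors on each side, so you should check the resulting placement of the four permutations against the indexing conventions fixed in \eqref{Lambda} and Lemma \ref{L:InvolId} rather than against the displayed formula alone.
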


The next corollary follows immediately from the above lemma.

\begin{corollary}\label{C:BlockPerms}
Let $S\in\BF^{n^2 \times q^2}$. For  $1\leq i_1,i_2\leq n$, $1\leq j_1,j_2\leq q$ set
\[
R= \La_{(n,q)}^{(n,q)}(S) \ands
R'= \La_{(n,q)}^{(n,q)} \left(\left(P_{i_1,i_2} \otimes P_{i_1,i_2}) S (P_{j_1,j_2} \otimes P_{j_1,j_2}\right)\right).
\]
Then $R \in \cH_{nq}$ if and only if $R' \in \cH_{nq}$. Furthermore, $R \geq 0$ if and only if $R' \geq 0$.
\end{corollary}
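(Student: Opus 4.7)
The plan is to obtain $R'$ as a (unitary) congruence of $R$ by a permutation matrix, after which both equivalences are immediate from standard properties of unitary conjugation.

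First I would apply Lemma \ref{L:BlockPerms} in the special case $p=n$, $r=q$, taking the row-indices $(i_1,i_2)$ and $(k_1,k_2)$ both equal to $(i_1,i_2)$, and the column-indices $(j_1,j_2)$ and $(l_1,l_2)$ both equal to $(j_1,j_2)$. This yields
\[
R' = \La_{(n,q)}^{(n,q)}\!\left(\left(P_{i_1,i_2}\otimes P_{i_1,i_2}\right) S \left(P_{j_1,j_2}\otimes P_{j_1,j_2}\right)\right) = \left(P_{i_1,i_2}\otimes P_{j_1,j_2}\right) R \left(P_{i_1,i_2}\otimes P_{j_1,j_2}\right).
\]
Set $U:=P_{i_1,i_2}\otimes P_{j_1,j_2}$. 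Since each $P_{i,j}$ is a real symmetric involution, so is $U$; in particular $U$ is unitary with $U=U^*=U^{-1}$.

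Then the identity $R' = U R U^* = U R U^{-1}$ shows that $R'$ and $R$ are related by unitary similarity (equivalently, $*$-congruence). Consequently $R'^* = U R^* U^*$, so $R'=R'^*$ if and only if $R=R^*$, giving the first equivalence $R\in\cH_{nq}\Llra R'\in\cH_{nq}$. For the second, unitary $*$-congruence preserves positive semidefiniteness: if $R\geq 0$ then $x^* R' x = (U^* x)^* R (U^* x)\geq 0$ for every $x\in\BF^{nq}$, and the converse follows by the same argument applied to $R=U^* R' U$. Hence $R\geq 0\Llra R'\geq 0$.

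The proof is essentially an immediate corollary of Lemma \ref{L:BlockPerms}, so I do not anticipate any genuine obstacle; the only thing to check carefully is that the same permutation matrix $U$ appears on both sides of $R$ (rather than a pair of distinct permutations), which is exactly why the hypothesis insists on identical row and column index pairs in the Kronecker factors applied to $S$.
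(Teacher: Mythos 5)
Your proof is correct and is exactly the argument the paper intends: the paper states only that the corollary ``follows immediately from the above lemma,'' and the intended route is precisely your specialization of Lemma \ref{L:BlockPerms} with $(k_1,k_2)=(i_1,i_2)$, $(l_1,l_2)=(j_1,j_2)$, giving $R'=URU^*$ for the symmetric permutation involution $U=P_{i_1,i_2}\otimes P_{j_1,j_2}$, from which both equivalences follow by unitary congruence.
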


\section{Linear and $*$-linear matrix maps}\label{S:Linear}

Consider a linear matrix map $\cL$ of the form
\begin{equation}\label{cL-NS}
\cL:\BF^{q\times r} \to \BF^{n\times p}.
\end{equation}
We associated two matrices with $\cL$ that can be used to represent $\cL$ and describe its properties.\medskip

\paragraph{\bf Matricization}
Since $\cL$ is a linear map from $\BF^{q \times r}$ to $\BF^{n\times p}$, via the vectorization operator we can translate $\cL$ to a linear map from $\BF^{qr}$ to $\BF^{np}$:
\begin{equation}\label{Ldef}
L:\BF^{qr} \to \BF^{np},\quad L\,\left(\vect_{q\times r}(V)\right)= \vect_{n \times p}\left(\cL (V)\right),\quad V\in\BF^{q\times r}.
\end{equation}
In the usual way we identify $L$ with a matrix in $\BF^{np \times qr}$ with respect to the standard bases in $\BF^{qr}$ and $\BF^{np}$, which we call the {\em matricization} of $\cL$. Since $\cL$ and $L$ are similar, it follows that they have the same eigenvalues, and hence $\cL$ is bijective if and only if $L$ is invertible.

Since vectorization is an invertible operation, we can describe $\cL$ in terms of $L$ as
\begin{equation}\label{Linv}
\cL(V)= \vect_{n \times p}^{-1}\left(L\vect_{q \times r}(V)\right),\quad V \in\BF^{q\times r}.
\end{equation}
This also implies that any $L\in\BF^{np \times qr}$ defines a linear map $\cL$ as in \eqref{cL-NS} via \eqref{Linv}.\medskip

\paragraph{\bf The Choi matrix representation}
The {\em Choi matrix} $\BL$ associated with $\cL$ is obtained by considering the action of $\cL$ on the standard basis elements $\cE_{ij}^{(q,r)}$ in $\BF^{q \times r}$:
\begin{equation}\label{BLdef}
\BL=\left[\BL_{ij}\right] \in \BF^{nq \times pr},\quad \BL_{ij}=\cL\left(\cE_{ij}^{(q,r)}\right) \in\BF^{n \times p},
\end{equation}
where $i=1,\ldots,q$ and $j=1, \ldots, r.$ For $V\in\BF^{q \times r}$ with matrix representation $V=\left[v_{ij}\right]$, we have $V=\sum_{i=1}^q \sum_{j=1}^r v_{ij} \cE_{ij}^{(q,r)}$. Hence, via the linearity of $\cL$ we have the following representation of $\cL$ in terms of the Choi matrix:
\begin{equation}\label{BL to cL1}
\cL(V)=\sum_{i=1}^q \sum_{j=1}^r v_{ij}\BL_{ij},\quad V=[v_{ij}]\in\BF^{q\times r}.
\end{equation}
This can also be written as
\begin{equation}\label{BL to cL2}
\cL(V)=\left(\OneVec_q\otimes I_n\right)^T\left(\BL \circ \left(V \otimes \BBone_{n \times p}\right)\right)\left(\OneVec_r\otimes I_p\right).
\end{equation}
Also here, any $\BL\in\BF^{nq \times pr}$ defines a linear map $\cL$ as in \eqref{cL-NS}, here via \eqref{BL to cL1} or \eqref{BL to cL2}.

The next proposition shows how $\BL$ can be obtained from $L$, and conversely $L$ from $\BL$. This observation was made in \cite{PH81} (for $n=q$ and $p=r$).

\begin{proposition}\label{P:L-BLrel}
Consider a linear map $\cL$ as in \eqref{cL-NS} and define $L$ by \eqref{Ldef} and $\BL$ by \eqref{BLdef}. Then $\BL=\La_{(n,q)}^{(p,r)}(L)$, or, equivalently, $L=\La_{(n,p)}^{(q,r)}(\BL)$, where $\La_{(n,q)}^{(p,r)}$ is the matrix map defined in \eqref{Lambda}.
\end{proposition}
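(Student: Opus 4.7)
The cleanest route is to appeal to the entrywise characterization in Corollary \ref{C:EntryDescr}. Writing $L = [L_{ij}]$ with $L_{ij} = [v_{kl}^{ij}] \in \BF^{n\times q}$ for $1\le i\le p$, $1\le j\le r$, and $\BL = [\BL_{lj}]$ with $\BL_{lj} = [w_{ki}^{lj}] \in \BF^{n\times p}$ for $1 \le l \le q$, $1 \le j \le r$, that corollary reduces the identity $\BL = \La_{(n,q)}^{(p,r)}(L)$ to verifying the scalar equality $v_{kl}^{ij} = w_{ki}^{lj}$ for every admissible quadruple $(i,j,k,l)$.

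First I would read off the entries of $\BL$ directly from its definition: $w_{ki}^{lj} = (e_k^{(n)})^T \BL_{lj} e_i^{(p)} = (e_k^{(n)})^T \cL(\cE_{lj}^{(q,r)}) e_i^{(p)}$. Next I would compute the matching entries of $L$. As a block matrix with blocks in $\BF^{n\times q}$ one has $v_{kl}^{ij} = (e_i^{(p)} \otimes e_k^{(n)})^T L (e_j^{(r)} \otimes e_l^{(q)})$. The point is that by \eqref{UnitVectId1},
\[
e_j^{(r)} \otimes e_l^{(q)} = \vect_{q\times r}\!\left(e_l^{(q)}(e_j^{(r)})^T\right) = \vect_{q\times r}\!\left(\cE_{lj}^{(q,r)}\right),
\]
so the defining relation \eqref{Ldef} of the matricization gives $L(e_j^{(r)} \otimes e_l^{(q)}) = \vect_{n\times p}(\cL(\cE_{lj}^{(q,r)}))$. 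Applying \eqref{VectForm1} then yields $v_{kl}^{ij} = (e_k^{(n)})^T \cL(\cE_{lj}^{(q,r)}) e_i^{(p)}$, matching $w_{ki}^{lj}$ exactly.

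With the entrywise equality established, Corollary \ref{C:EntryDescr} concludes $\BL = \La_{(n,q)}^{(p,r)}(L)$. The equivalent formulation $L = \La_{(n,p)}^{(q,r)}(\BL)$ is then immediate from the fact that $(\La_{(n,q)}^{(p,r)})^{-1} = \La_{(n,p)}^{(q,r)}$, which was already shown in the preceding proposition.

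The only real obstacle is bookkeeping: the four indices $(i,j,k,l)$ permute their roles between the block structures of $L$ and $\BL$, and one must keep track of which index corresponds to which block or within-block position, and in which order Kronecker products organize them. All of this is precisely what the reordering $\La_{(n,q)}^{(p,r)}$ encodes, so once the two computations are written side by side the matching is transparent; I would foresee no substantive difficulty beyond careful indexing.
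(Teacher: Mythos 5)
Your proof is correct. It rests on the same key observation as the paper's own proof --- namely that $L\left(e_j^{(r)}\otimes e_l^{(q)}\right)=L\,\vect_{q\times r}\left(\cE_{lj}^{(q,r)}\right)=\vect_{n\times p}\left(\BL_{lj}\right)$ --- but where the paper stops there and reads off $L=\La_{(n,p)}^{(q,r)}(\BL)$ directly from the block-level description \eqref{LambdaInv} of the inverse map, you contract further with $\left(e_i^{(p)}\otimes e_k^{(n)}\right)^T$ and invoke the entrywise characterization of Corollary \ref{C:EntryDescr}; this adds one harmless layer of index bookkeeping but changes nothing of substance.
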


\begin{proof}[\bf Proof]
For $i=1,\ldots,q$ and $j=1, \ldots, r$ we have
\[
\BL_{ij}=\cL\left(\cE_{ij}^{(q,r)}\right)=\vect^{-1}_{n \times p}\left(L \vect_{q \times r}\left(\cE_{ij}^{(q,r)}\right)\right)= \vect_{n \times p}^{-1}\left(L \left(e_j^{(r)}\otimes  e_i^{(q)}\right)\right).
\]
Hence $L \left(e_j^{(r)}\otimes  e_i^{(q)}\right)=\vect_{n \times p}\left(\BL_{ij}\right)$ for all $i=1,\ldots,q$ and $j=1, \ldots, r$ which proves that $\La_{(n,p)}^{(q,r)}\left(\BL\right)=L$.
\end{proof}

In the remainder of the paper we only consider linear maps on square matrices. Hence we will look at linear maps of the form
\begin{equation}\label{cL}
\cL:\BF^{q\times q} \to \BF^{n\times n}.
\end{equation}
In this case, following \cite{KMcCSZ19}, the linear matrix map $\cL$ in \eqref{cL} is called {\em $*$-linear} in case it respects adjoints, i.e., if $\cL(V^*)=\cL(V)^*$ for all $V\in\BF^{q\times q}$. We say that $\cL$ is {\em Hermitian-preserving} if $\cL$ maps $\cH_q$ into $\cH_n$. Clearly, a $*$-linear map is Hermitian-preserving. For $\BF=\BC$ the converse is also true.

\begin{theorem}\label{T:*-linear}
Let $\cL$ be a linear map as in \eqref{cL}. Define $L\in\BF^{n^2 \times q^2}$ by \eqref{Ldef} and $\BL\in\BF^{nq \times nq}$ by \eqref{BLdef}. Then the following are equivalent:
\begin{itemize}
  \item[(i)] $\cL$ is $*$-linear;
  \item[(ii)] $\BL\in\cH_{nq}$;
  \item[(iii)] $\ov{L}=\fC_n L \fC_q$.
\end{itemize}
Moreover, if one (and hence all) of the above holds, then $\cL\left(\cH_q\right)\subset \cH_n$. Finally, for $\BF=\BC$, if $\cL\left(\cH_q\right)\subset \cH_n$, then (i)--(iii) hold.
\end{theorem}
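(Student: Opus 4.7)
The plan is to prove the equivalences in the cyclic order (i) $\Leftrightarrow$ (ii) $\Leftrightarrow$ (iii), then handle the last two implications separately. Much of the work is bookkeeping with adjoints and conjugates; the nontrivial tool is Proposition \ref{P:Herm}, which already translates Hermicity of $\BL$ into an identity on $L$.

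For (i) $\Leftrightarrow$ (ii), I would argue directly from the block structure of $\BL$. Note that $(\cE_{ij}^{(q)})^* = \cE_{ji}^{(q)}$, so applying the hypothesis of $*$-linearity on the basis elements gives $\BL_{ji}=\cL(\cE_{ji}^{(q)}) = \cL(\cE_{ij}^{(q)})^* = \BL_{ij}^*$, which is exactly the block-Hermicity of $\BL$. Conversely, writing a general $V = [v_{ij}] \in \BF^{q \times q}$ as $V = \sum v_{ij}\cE_{ij}^{(q)}$ and using \eqref{BL to cL1} together with $V^* = \sum \ov{v}_{ji}\cE_{ij}^{(q)}$, I compute
\begin{equation*}
\cL(V^*) = \sum_{i,j=1}^q \ov{v}_{ji}\BL_{ij},\qquad \cL(V)^* = \sum_{i,j=1}^q \ov{v}_{ij}\BL_{ij}^* = \sum_{i,j=1}^q \ov{v}_{ji}\BL_{ji}^*,
\end{equation*}
so that $\cL(V^*) = \cL(V)^*$ for all $V$ is equivalent to $\BL_{ij} = \BL_{ji}^*$ for all $i,j$, i.e., $\BL \in \cH_{nq}$.

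For (ii) $\Leftrightarrow$ (iii), I would simply invoke Proposition \ref{P:L-BLrel} to write $\BL = \La_{(n,q)}^{(n,q)}(L)$ and then apply Proposition \ref{P:Herm} with $p=n$ and $r=q$: the condition $\BL \in \cH_{nq}$ translates precisely to $\ov{L} = \fC_n L \fC_q$.

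The implication $*$-linear $\Rightarrow$ Hermitian-preserving is immediate: for $V \in \cH_q$ one has $\cL(V)^* = \cL(V^*) = \cL(V)$, so $\cL(V) \in \cH_n$. The converse over $\BF = \BC$ is the one place the scalar field matters. Here the main idea is the Cartesian decomposition: any $V \in \BC^{q \times q}$ can be written as $V = H_1 + \imag H_2$ with $H_1 = \half(V + V^*)$ and $H_2 = \frac{1}{2\imag}(V - V^*)$ both Hermitian, so that $V^* = H_1 - \imag H_2$. Applying $\cL$ and using $\BC$-linearity together with the hypothesis $\cL(H_k) \in \cH_n$ yields
\begin{equation*}
\cL(V)^* = \cL(H_1)^* - \imag \cL(H_2)^* = \cL(H_1) - \imag \cL(H_2) = \cL(V^*),
\end{equation*}
which is (i). The only subtlety worth flagging is that this last step genuinely uses $\BF=\BC$; over $\BR$, antisymmetric matrices do not become Hermitian after multiplication by a scalar, so the same proof does not go through.
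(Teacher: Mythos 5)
Your proposal is correct and follows essentially the same route as the paper: (i)$\Leftrightarrow$(ii) by testing on the basis elements $\cE_{ij}^{(q)}$ and expanding via \eqref{BL to cL1}, and (ii)$\Leftrightarrow$(iii) by quoting Proposition \ref{P:Herm}. The only cosmetic difference is in the final implication for $\BF=\BC$: you apply the Cartesian decomposition $V=H_1+\imag H_2$ to a general $V$ and conclude (i) directly, whereas the paper applies the same idea to the specific Hermitian matrices $\cE_{ij}^{(q)}+\cE_{ji}^{(q)}$ and $\imag\cE_{ij}^{(q)}-\imag\cE_{ji}^{(q)}$ to conclude (ii); both are valid and of equal length.
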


For $\BF=\BC$, with $*$-linearity replaced by Hermitian-preserving, the equivalence of (i) and (ii) was obtained by Hill in \cite[Theorem 1]{H73}, using the formulation in terms of the entries of $L$, as in \eqref{HermChar2}. The equivalence with (ii) was obtained in \cite[Theorem 2]{PH81}. That for $\BF=\BC$ Hermitian-preserving maps are $*$-linear was noted on Page 260 in \cite{H73}.

\begin{proof}[\bf Proof of Theorem \ref{T:*-linear}]
The equivalence of (ii) and (iii) follows directly from Proposition \ref{P:Herm}. Now we prove the equivalence of (i) and (ii). Assume $\cL$ is $*$-linear. Then by \eqref{BLdef}, for all $1 \le i,j \le q$ we have
\[
\BL_{ij}^*=\cL\left(\cE^{(q)}_{ij}\right)^*=\cL\left(\left(\cE^{(q)}_{ij}\right)^*\right)=\cL\left(\cE^{(q)}_{ji}\right)=\BL_{ji},
\]
so that $\BL\in\cH_{nq}$. Conversely, if $\BL\in\cH_{nq}$, then by \eqref{BL to cL1} it follows for all $V\in\BF^{q\times q}$ that
\[
\cL\left(V^*\right)=\sum_{i,j=1}^q \ov{v}_{ji}\BL_{ij}=\sum_{i,j=1}^q \ov{v}_{ji}\BL_{ji}^*
=\left(\sum_{i,j=1}^q v_{ji}\BL_{ji}\right)^*=\cL(V)^*.
\]
Hence (i), (ii) and (iii) are equivalent. Assuming (i), it follows for all $V\in\cH_q$ that $\cL(V)^*=\cL(V^*)=\cL(V)$. Hence $\cL\left(\cH_q\right)\subset \cH_n$.

Finally, let $\BF=\BC$ and assume $\cL\left(\cH_q\right)\subset \cH_n$. We prove that $\BL\in\cH_{nq}$. Since $\cE_{ij}^{(q)}+\cE_{ji}^{(q)}\in \cH_q$, we obtain that $\BL_{ij}+\BL_{ji}=\cL\left(\cE_{ij}^{(q)}+\cE_{ji}^{(q)}\right)\in \cH_n$. Hence
\[
\BL_{ij}+\BL_{ji}=\BL_{ij}^*+ \BL_{ji}^*.
\]
Similarly, using that $\imag \cE_{ij}^{(q)}- \imag \cE_{ji}^{(q)}\in \cH_q$ it follows that
\[
\BL_{ij}-\BL_{ji}=-\BL_{ij}^*+ \BL_{ji}^*.
\]
Adding the above two identities yields $2\, \BL_{ij}= 2\, \BL_{ji}^*$. Hence $\BL\in\cH_{nq}$.
\end{proof}

By the argument in the last paragraph of the above proof, using only the first identity, for $\BF=\BR$ it still follows that $\BL_{ii}\in \cH_n$ for each $i$; see Theorem 3 in \cite{BG15} where this was noted for a special case. For $\BF=\BR$ conditions (i)-(iii) are not implied by $\cL\left(\cS_q\right)\subset \cS_n$ as seen by the next example.

\begin{example}\label{E:StoSnotenough}
Consider the linear map $\cL: \BR^{2 \times 2} \to \BR^{3\times 3}$ given by
\begin{equation*}
\cL\left(\begin{bmatrix} k_{11} & k_{12} \\ k_{21} & k_{22} \end{bmatrix}\right)=\begin{bmatrix} k_{11}+k_{12} & 0 &0\\ 0 & k_{21}+k_{22} &0 \\ 0& 0&0\end{bmatrix}.
\end{equation*}
Clearly $\cL$ maps $\cS_2$ into $\cS_3$, in fact, $\BR^{2\times 2}$ is mapped into $\cS_3$. However, we have
\begin{equation*}
\begin{aligned}
    \BL_{11}&=\cL\left(\begin{bmatrix} 1 & 0 \\ 0 & 0   \end{bmatrix}\right)=\begin{bmatrix} 1 & 0 &0\\ 0 & 0 &0 \\ 0 & 0 & 0 \end{bmatrix}, \quad \BL_{12}=\cL\left(\begin{bmatrix} 0 & 1\\0& 0\end{bmatrix}\right) =\begin{bmatrix} 1 & 0 &0 \\0& 0 & 0\\0&0&0 \end{bmatrix}, \\ \BL_{21}&=\cL\left(\begin{bmatrix} 0 & 0  \\ 1 & 0 \end{bmatrix}\right)=\begin{bmatrix} 0 &0 & 0 \\ 0 & 1 &0\\0&0&0\end{bmatrix}, \quad \BL_{22}=\cL\left(\begin{bmatrix} 0 & 0 \\ 0 & 1 \end{bmatrix}\right)=\begin{bmatrix} 0 &0 & 0 \\ 0 & 1&0 \\ 0 &0 & 0\end{bmatrix},
\end{aligned}
\end{equation*}
so that
\begin{equation*}
\BL=\begin{bmatrix}
\BL_{11} & \BL_{12} \\ \BL_{21} & \BL_{22}\end{bmatrix}=\begin{bmatrix} 1 & 0 &0 & 1 & 0 &0 \\ 0 & 0 & 0 & 0 &0 &0 \\ 0 & 0 & 0 & 0 &0 &0\\ 0 & 0 & 0 & 0 &0 &0 \\ 0 &1 &0 &0&1&0 \\ 0&0&0&0&0&0 \end{bmatrix}\neq \BL^T.
\end{equation*}
\end{example}

\section{Hill representations of $*$-linear matrix maps}\label{S:HillReps}

In this section we conduct our main analysis of minimal Hill representations of $*$-linear maps and prove our main theorems. For the reader's convenience we recall here that a Hill representation for a $*$-linear map $\cL$ as in \eqref{cL} is a representation of the form
\begin{equation}\label{HillRep}
\cL(V)=\sum_{k,l=1}^m \BH_{kl}\, A_l V A_k^*,\quad V\in\BF^{q \times q},
\end{equation}
for matrices $A_1,\ldots,A_m\in\BF^{n \times q}$. The matrix $\BH=[\BH_{kl}]_{k,l=1}^m\in\BF^{m\times m}$ is called the associated Hill matrix, and we say that a Hill representation \eqref{HillRep} is minimal in case the number $m$ is the smallest among all Hill representations for $\cL$. Note that in a minimal Hill representation, the matrices $A_1,\ldots,A_m$ must be linearly independent, since otherwise the linear dependency can be used to obtain a Hill representation with a smaller number $m$.

Given a linear map  $\cL$ as in \eqref{cL}, define the matricization $L$ by \eqref{Ldef} and the Choi matrix $\BL$ by \eqref{BLdef}. The number $m$ in \eqref{HillRep} for a minimal Hill representation is equal to the rank of the Choi matrix:
\[
m:=\rank \,\BL.
\]
We prove this fact in Corollary \ref{C:minimal} below. Note that the columns of $\BL$ are obtained by vectorizing the block entries $L_{ij}\in \BF^{n \times q}$ of $L$. Therefore, we have that
\begin{equation}\label{mAlt}
m=\dim \tu{span}\{ L_{ij} \colon i=1,\ldots,n, \, j=1,...,q \}.
\end{equation}
Now let $L_1,\ldots,L_m\in\BF^{n \times q}$ so that
\begin{equation}\label{LkCond}
\tu{span}\{ L_{ij} \colon i=1,\ldots,n, \, j=1,...,q \} =\tu{span}\{ L_{k} \colon k=1,\ldots,m \}.
\end{equation}
At this stage we do not assume that the matrices $L_1,\ldots, L_m$ are among the block entries $L_{ij}$ of $L$, but it is always possible to choose them in that way. It follows that there exits scalars $\al^{ij}_k, \be_{ij}^k\in\BF$ for $i=1,\ldots,n$, $j=1,...,q$ and $k=1,\ldots,m$, so that
\begin{equation}\label{LkLij}
L_k =\sum_{i=1}^n\sum_{j=1}^q \be_{ij}^k L_{ij},\quad L_{ij}=\sum_{k=1}^m \al^{ij}_k L_k.
\end{equation}
Note that the matrices $L_1,\ldots, L_m$ form a linearly independent set.
Hence the scalars $\al_k^{ij}$ are uniquely determined. This need not be the case for the $\be_{ij}^k$. Set
\begin{equation}\label{AkBk}
A_k=\left[\ov{\al}_{k}^{ij}\right]\in \BF^{n \times q} \quad \text{and} \quad B_k=\left[\be_{ij}^k\right]\in\BF^{n \times q} \quad \text{for} \quad k=1,\ldots,m.
\end{equation}
Then we have
\begin{equation}\label{LLk}
L=\sum_{k=1}^m \ov{A}_k \otimes L_k \ands L_k=\left(\OneVec_n \otimes I_n\right)^*\left(\left(B_k \otimes \BBone_{n \times q}\right) \circ L\right)\left(\OneVec_q\otimes I_q\right).
\end{equation}
Next we define what we call the {\em Hill matrix} associated with $\cL$ and the matrices $L_1,\ldots,L_m$:
\begin{equation}\label{Hill}
\BH=\BH\left(\cL;L_1,\ldots,L_m\right):=\left[\OneVec_n^*\left(B_k \circ \ov{L}_l\right)\OneVec_q\right]_{k,l=1}^m\in\BF^{m \times m}.
\end{equation}
In case $\cL$ is $*$-linear, $\cL$ admits a minimal Hill representation with $A_1,\ldots,A_m$ and $\BH$ as constructed above, as follows from the next theorem, which also provides representations for $L$ and $\BL$.

\begin{theorem}\label{T:HillRep}
Assume $\cL$ as in \eqref{cL} is $*$-linear. Define $L$ as in \eqref{Ldef} and $\BL$ as in \eqref{BLdef} and let $m=\rank \, \BL$. Choose $L_1,\ldots,L_m\in\BF^{n \times q}$ so that \eqref{LkCond} holds and define $A_1,\ldots,A_m$ and $\BH$ as above. Then
\begin{equation}\label{cLLBL2}
\begin{aligned}
\cL(V)&=\sum_{k,l=1}^m \BH_{kl} \, A_l V A_k^*,\quad
L=\sum_{k,l=1}^m \BH_{kl}\, \ov{A}_k\otimes A_l,\\
\BL&=\sum_{k,l=1}^m \BH_{kl}\, \vect_{n\times q}\left(A_l\right)\vect_{n \times q}\left(\ov{A}_k\right)^T=\widehat{A}^*\BH^T\widehat{A},
\end{aligned}
\end{equation}
with $\widehat{A}^*:=\begin{bmatrix} \vect_{n \times q}\left(A_1\right) & \hdots & \vect_{n \times q}\left(A_m\right)  \end{bmatrix}\in \BF^{nq \times m}$. Furthermore $\BH$ is in $\cH_m$ and invertible. Alternatively, these identities can be written as
\begin{align}
\cL(V)&= \left(\OneVec_m\otimes I_{n}\right)^* \left(\left(\BH \otimes \BBone_{n}\right) \circ \left[
A_l V A_k^*\right]_{k,l=1}^m\right) \left(\OneVec_m\otimes I_{n}\right),\notag\\
L&=\left(\OneVec_m\otimes I_{n^2}\right)^* \left(\left(\BH \otimes \BBone_{n^2 \times q^2}\right) \circ \left[\ov{A}_k\otimes A_l\right]_{k,l=1}^m\right) \left(\OneVec_m\otimes I_{q^2}\right),  \label{cLLBL1}\\
\BL&=\left(\OneVec_m\otimes I_{nq}\right)^* \left(\left(\BH \otimes \BBone_{nq}\right) \circ \left[\vect_{n\times q}\left(A_l\right)\vect_{n\times q}\left(\ov{A}_k\right)^T\right]_{k,l=1}^m\right) \left(\OneVec_m\otimes I_{nq}\right).\notag
\end{align}
\end{theorem}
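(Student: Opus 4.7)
The plan is to obtain all three identities in \eqref{cLLBL2} from a single key equation $L_k = \sum_l \BH_{kl}\,A_l$, and then to deduce the Hermiticity and invertibility of $\BH$ from the resulting factorisation $\BL = \widehat A^*\BH^T\widehat A$.

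First I would use $*$-linearity---in the form $\ov L = \fC_n L \fC_q$ provided by Theorem~\ref{T:*-linear}(iii)---together with the direct identity $\fC_n(X\otimes Y)\fC_q = Y\otimes X$ for $X,Y\in\BF^{n\times q}$ (immediate from $\fC_n(z\otimes x) = x\otimes z$) to upgrade the Kronecker expansion $L = \sum_k \ov A_k \otimes L_k$ of \eqref{LLk} to the ``dual'' expansion $L = \sum_k \ov L_k \otimes A_k$. Reading off the $(i,j)$-block now gives $L_{ij} = \sum_k \ov{(L_k)_{ij}}\,A_k$. Substituting this into the first identity of \eqref{LkLij}, $L_l = \sum_{i,j}\be_{ij}^l L_{ij}$, and recognising the coefficient of $A_k$ as precisely $\OneVec_n^*(B_l\circ\ov L_k)\OneVec_q = \BH_{lk}$ (cf.\ \eqref{Hill}), yields the key equation $L_l = \sum_k \BH_{lk}\,A_k$. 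From here the three identities in \eqref{cLLBL2} follow routinely: the formula for $L$ by back-substitution into $L = \sum_k \ov A_k\otimes L_k$; the formula for $\cL(V)$ via $\vect(\cL(V)) = L\vect(V)$ together with the standard identity $(\ov A_k\otimes A_l)\vect(V) = \vect(A_l V A_k^*)$; and the formula for $\BL$ from $\BL = \La_{(n,q)}^{(n,q)}(L)$ (Proposition~\ref{P:L-BLrel}) combined with the second identity of Lemma~\ref{L:DiadTens}, $\La(\ov A_k\otimes A_l) = \vect(A_l)\vect(\ov A_k)^T$. The compact form $\widehat A^*\BH^T\widehat A$ is then a direct rearrangement of the double sum, and the alternative expressions in \eqref{cLLBL1} are immediate applications of \eqref{SumCirc}.

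For the Hermiticity and invertibility of $\BH$, I would first establish that $A_1,\ldots,A_m$ are linearly independent: a relation $\sum_k c_k A_k = 0$ reads entrywise as $\sum_k \ov c_k \al_k^{ij} = 0$ for every $(i,j)$, so the linear functional $x \mapsto \sum_k \ov c_k x_k$ on $\BF^m$ vanishes on every coordinate vector $(\al_k^{ij})_k$ of $L_{ij}$ in the basis $\{L_1,\ldots,L_m\}$; since these coordinate vectors span $\BF^m$ (as the $L_{ij}$ span $\tu{span}\{L_1,\ldots,L_m\}$), the functional vanishes identically and $c = 0$. Consequently $\widehat A^*\in\BF^{nq\times m}$ has full column rank $m$, so the map $M\mapsto \widehat A^* M \widehat A$ is injective on $\BF^{m\times m}$. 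Combined with $\BL\in\cH_{nq}$ (from Theorem~\ref{T:*-linear}) this injectivity forces $(\BH^T)^* = \BH^T$, equivalently $\BH\in\cH_m$; combined with $\rank\BL = m$ it forces $\rank\BH = m$, so $\BH$ is invertible. The hardest step is the very first one: extracting the dual expansion $L = \sum_k \ov L_k\otimes A_k$ from the conjugate-shuffle symmetry $\ov L = \fC_n L \fC_q$. This is where $*$-linearity does the decisive work, coupling the coefficients $\be_{ij}^k$ with the entries of $L_k$ into the Hadamard-trace form that defines $\BH$.
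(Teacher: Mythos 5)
Your proof is correct, and its main line is the same as the paper's: you derive the dual expansion $L=\sum_k \ov{L}_k\otimes A_k$ from the shuffle symmetry $\ov{L}=\fC_n L\fC_q$ of Theorem~\ref{T:*-linear}, extract the key relation $L_l=\sum_k \BH_{lk}A_k$, and then obtain the three identities in \eqref{cLLBL2} by back-substitution, vectorization, and Lemma~\ref{L:DiadTens}, exactly as the paper does via Proposition~\ref{P:*linearChar}. You deviate in two spots, both legitimately. First, for the Hermiticity of $\BH$ the paper reads it off from the identity \eqref{SelfAdjId}, which it proves by Hadamard-product manipulations inside Proposition~\ref{P:*linearChar}; you instead get it from the factorisation $\BL=\widehat{A}^*\BH^T\widehat{A}$, the Hermiticity of $\BL$, and the injectivity of $M\mapsto\widehat{A}^*M\widehat{A}$ when $\widehat{A}$ has full row rank. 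This is a slicker argument for that particular claim and avoids one computation, at the cost of needing the linear independence of $A_1,\ldots,A_m$ \emph{before} the Hermiticity step rather than after. Second, you establish that independence by a direct duality argument (the coordinate vectors $(\al^{ij}_k)_k$ span $\BF^m$ because the $L_{ij}$ span an $m$-dimensional space), whereas the paper routes this through Corollary~\ref{C:spanA1Am} and Lemma~\ref{L:MatTrans}; your argument is self-contained and does not even use $*$-linearity, which is a small bonus. The invertibility argument via $m=\rank\BL\le\rank\BH\le m$ coincides with the paper's.
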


The proof will be given later in this section after some auxiliary results.  The fact that $\BH$ is in $\cH_m$ follows directly from \cite[Theorem 4]{PH81}, however, we will also provide a proof for this fact.

We start with a result that also holds when $\cL$ is not $*$-linear.

\begin{lemma}
For $L_1,\ldots,L_m$ as in \eqref{LkCond} and $A_1,\ldots,A_m$ and $B_1,\ldots,B_m$ defined by \eqref{AkBk} and \eqref{LkLij} we have
\begin{equation}\label{AkBkRel}
\OneVec_n^*\left(B_k \circ \ov{A}_k\right)\OneVec_q=1 \quad \text{and} \quad \OneVec_n^*\left(B_k \circ \ov{A}_l\right)\OneVec_q=0 \quad \text{for} \quad l \neq k.
\end{equation}
\end{lemma}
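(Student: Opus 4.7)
The plan is to prove both identities simultaneously by recognizing the expression $\OneVec_n^*(B_k \circ \ov{A}_l)\OneVec_q$ as a sum that arises naturally from composing the two expansions in \eqref{LkLij}, then invoking the linear independence of $L_1,\ldots,L_m$.

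First I would expand the scalar quantity of interest directly from the definitions in \eqref{AkBk}. By construction $B_k = [\be_{ij}^k]$ and $\ov{A}_l = [\al_l^{ij}]$, so the Hadamard product has entries $(B_k \circ \ov{A}_l)_{ij} = \be_{ij}^k \al_l^{ij}$, and therefore
\begin{equation*}
\OneVec_n^*\left(B_k \circ \ov{A}_l\right)\OneVec_q = \sum_{i=1}^n \sum_{j=1}^q \be_{ij}^k \al_l^{ij}.
\end{equation*}
(This is also an instance of \eqref{TraceInn} with $A = B_k$ and $B = A_l$.)

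Next I would substitute the second relation in \eqref{LkLij} into the first. Fixing $k$, expanding each $L_{ij}$ in terms of the basis $L_1,\ldots,L_m$, and regrouping gives
\begin{equation*}
L_k = \sum_{i=1}^n\sum_{j=1}^q \be_{ij}^k L_{ij} = \sum_{l=1}^m \left(\sum_{i=1}^n\sum_{j=1}^q \be_{ij}^k \al_l^{ij}\right) L_l.
\end{equation*}
Since $L_1,\ldots,L_m$ are linearly independent (as noted immediately after \eqref{LkCond}), comparing coefficients forces
\begin{equation*}
\sum_{i=1}^n\sum_{j=1}^q \be_{ij}^k \al_l^{ij} = \de_{kl}.
\end{equation*}
Combining this with the first display yields the two claimed identities at once.

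I do not anticipate any real obstacle: the only subtle point is the ordering of conjugations (one must be careful that $\ov{A}_l$, not $A_l$, supplies the $\al_l^{ij}$ factors, since $A_l = [\ov{\al}_l^{ij}]$), but once that is observed the computation is a direct substitution. The minor nuisance that the coefficients $\be_{ij}^k$ need not be uniquely determined causes no difficulty, because the identity $\sum_{i,j}\be_{ij}^k\al_l^{ij} = \de_{kl}$ holds for any valid choice of the $\be_{ij}^k$.
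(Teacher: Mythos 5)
Your proof is correct and is in substance the same as the paper's: both substitute the expansion $L_{ij}=\sum_l \al_l^{ij}L_l$ into $L_k=\sum_{i,j}\be_{ij}^k L_{ij}$ to get $L_k=\sum_l \OneVec_n^*\left(B_k\circ\ov{A}_l\right)\OneVec_q\, L_l$ and then compare coefficients via the linear independence of $L_1,\ldots,L_m$. The only difference is bookkeeping: you work entrywise with the scalars $\be_{ij}^k\al_l^{ij}$, while the paper performs the identical substitution through the Kronecker-product identities \eqref{LLk} and \eqref{HadTens}; your handling of the conjugations and of the non-uniqueness of the $\be_{ij}^k$ is also correct.
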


\begin{proof}[\bf Proof]
Using both identities in \eqref{LLk} we find that
\begin{align*}
L_k & = \left(\OneVec_n \otimes I_n\right)^*\left(\left(B_k \otimes \BBone_{n \times q}\right) \circ L\right)\left(\OneVec_q\otimes I_q\right) \\
&= \sum_{l=1}^m \left(\OneVec_n \otimes I_n\right)^*\left(\left(B_k \otimes \BBone_{n \times q}\right) \circ (\ov{A}_l \otimes L_l)\right)\left(\OneVec_q\otimes I_q\right)\\
&= \sum_{l=1}^m \left(\OneVec_n \otimes I_n\right)^*\left(\left(B_k \circ \ov{A}_l\right) \otimes \left(\BBone_{n \times q} \circ L_l\right)\right)\left(\OneVec_q\otimes I_q\right) \ \ \text{(using \eqref{HadTens})}\\
&= \sum_{l=1}^m \left(\OneVec_n \otimes I_n\right)^*\left(\left(B_k \circ \ov{A}_l\right) \otimes  L_l\right)\left(\OneVec_q\otimes I_q\right)
= \sum_{l=1}^m \OneVec_n^*\left(B_k \circ \ov{A}_l\right)\OneVec_q \,  L_l.
\end{align*}
The identities in \eqref{AkBkRel} now follow by the linear independence of $L_1,\ldots,L_m$.
\end{proof}

\begin{proposition}\label{P:*linearChar}
Let $\cL$ as in \eqref{cL} be linear and let $L_1,\ldots,L_m$ be as in \eqref{LkCond}. Define $A_1,\ldots,A_m$ and $B_1,\ldots,B_m$ by \eqref{AkBk} and \eqref{LkLij}. Then the following are equivalent:
\begin{itemize}
\item[(i)] $\cL$ is $*$-linear;

\item[(ii)] $L=\sum_{k=1}^m \ov{L}_k \otimes A_k$;

\item[(iii)] $L_k= \sum_{l=1}^m \OneVec_n^* \left(B_k \circ \ov{L}_l\right) \OneVec_q\, A_l$ for $k=1,\ldots,m.$

\end{itemize}
Furthermore, if one of the above holds, and hence all, then
\begin{equation}\label{SelfAdjId}
\OneVec_n^*\left(B_k\circ \overline{L}_l\right)\OneVec_q=\OneVec_n^*\left(\ov{B}_l\circ L_k\right)\OneVec_q \quad \text{for} \quad k,l=1,\ldots,m.
\end{equation}
\end{proposition}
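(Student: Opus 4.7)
The plan is to prove the chain (i) $\iff$ (ii) $\iff$ (iii), with the final identity \eqref{SelfAdjId} emerging as a bonus during the step (iii) $\Rightarrow$ (ii). The main tools are Theorem \ref{T:*-linear}, the two identities of \eqref{LLk}, the biorthogonality \eqref{AkBkRel}, and the Hadamard--Kronecker compatibility \eqref{HadTens}.

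For (i) $\iff$ (ii), I would invoke Theorem \ref{T:*-linear}, which reduces $*$-linearity to the condition $\overline{L} = \fC_n L \fC_q$. A one-line check on pure tensors $z \otimes x$ shows that $\fC_n (X \otimes Y) \fC_q = Y \otimes X$ for $X, Y \in \BF^{n \times q}$. Applying this to the first identity of \eqref{LLk} gives $\fC_n L \fC_q = \sum_k L_k \otimes \overline{A}_k$, whereas trivially $\overline{L} = \sum_k A_k \otimes \overline{L}_k$. Hence $*$-linearity is equivalent to $\sum_k A_k \otimes \overline{L}_k = \sum_k L_k \otimes \overline{A}_k$, and taking complex conjugates of this identity yields precisely (ii).

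For (ii) $\Rightarrow$ (iii), I would substitute (ii) into the second identity of \eqref{LLk}. Using \eqref{HadTens}, one has $(B_k \otimes \BBone_{n\times q}) \circ (\overline{L}_l \otimes A_l) = (B_k \circ \overline{L}_l) \otimes A_l$, and the sandwich by $(\OneVec_n \otimes I_n)^*(\cdot)(\OneVec_q \otimes I_q)$ collapses each summand to the scalar $\OneVec_n^*(B_k \circ \overline{L}_l)\OneVec_q$ times $A_l$, giving (iii).

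The step (iii) $\Rightarrow$ (ii) is the subtle one, and it is the main obstacle. Writing $c_{kl} := \OneVec_n^*(B_k \circ \overline{L}_l)\OneVec_q$, substituting (iii) into the first identity of \eqref{LLk} gives $L = \sum_{k,l} c_{kl} \overline{A}_k \otimes A_l$, while expanding $\sum_k \overline{L}_k \otimes A_k$ via the conjugate of (iii) and relabeling produces $\sum_{k,l} \overline{c}_{lk} \overline{A}_k \otimes A_l$, so the two expansions of $L$ agree provided $c_{kl} = \overline{c}_{lk}$. The key observation is that this Hermiticity is already forced by (iii): conjugating (iii) gives $\overline{L}_l = \sum_p \overline{c}_{lp}\overline{A}_p$, and inserting this into the definition of $c_{kl}$ while invoking the biorthogonality \eqref{AkBkRel} produces
\[
c_{kl} = \sum_p \overline{c}_{lp}\,\OneVec_n^*(B_k \circ \overline{A}_p)\OneVec_q = \overline{c}_{lk},
\]
which is exactly \eqref{SelfAdjId}. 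Thus \eqref{AkBkRel} serves as the biorthogonality bridge between the $A_k$ and $B_l$ systems that upgrades (iii) into (ii) and simultaneously yields the final assertion of the proposition.
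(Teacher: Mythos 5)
Your proposal is correct and follows essentially the same route as the paper's proof: the equivalence (i) $\Leftrightarrow$ (ii) via Theorem \ref{T:*-linear} and the shuffle identity $\fC_n(X\otimes Y)\fC_q = Y\otimes X$ applied to \eqref{LLk}, the step (ii) $\Rightarrow$ (iii) by inserting (ii) into the second identity of \eqref{LLk} and collapsing with \eqref{HadTens}, and the step (iii) $\Rightarrow$ (ii) by first extracting \eqref{SelfAdjId} from (iii) together with the biorthogonality \eqref{AkBkRel}. The only differences are cosmetic (the paper pairs $\ov{B}_l$ against the expansion of $L_k$ rather than $B_k$ against the expansion of $\ov{L}_l$, which amounts to the same computation).
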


\begin{proof}[\bf Proof]
Note that since $L$ is as in \eqref{LLk}, we have
\[
\fC_n L \fC_q=\sum_{k=1}^m \fC_n\left( \ov{A}_k \otimes L_k\right) \fC_q= \sum_{k=1}^m L_k \otimes \ov{A}_k.
\]
Hence $\ov{\fC_n L \fC_q}=\sum_{k=1}^m \ov{L}_k \otimes A_k$. The equivalence of (i) and (ii) now follows directly from the equivalence of (i) and (iii) in Theorem \ref{T:*-linear}.

Next we show that (ii) implies (iii). Assume (ii). Hence
\[
\sum_{l=1}^m \ov{A}_l \otimes L_l=L =\sum_{l=1}^m \ov{L}_l \otimes A_l.
\]
Using \eqref{AkBkRel}, for $k=1,\ldots,m$ this implies that
\begin{align*}
L_k&= \sum_{l=1}^m \OneVec_n^*\left(B_k \circ \ov{A}_l\right)\OneVec_q\, L_l
= \sum_{l=1}^m  \left(\OneVec_n\otimes I_n\right)^* \left(\left(B_k\otimes \BBone_{n\times q}\right) \circ \left(\ov{A}_l \otimes L_l\right)\right) \left(\OneVec_q\otimes I_q\right)\\
&= \sum_{l=1}^m  \left(\OneVec_n\otimes I_n\right)^* \left(\left(B_k\otimes \BBone_{n \times q}\right) \circ \left(\ov{L}_l \otimes A_l\right)\right) \left(\OneVec_q\otimes I_q\right)
=\sum_{l=1}^m \OneVec_n^*\left(B_k \circ \ov{L}_l\right)\OneVec_q\, A_l.
\end{align*}

Now assume that (iii) holds.  Using \eqref{AkBkRel} we find that
\begin{align*}
& \OneVec_n^*\left(B_k\circ \overline{L}_l\right)\OneVec_q
 =\sum_{r=1}^m \OneVec_n^*\left(\ov{B}_l\circ A_r\right)\OneVec_q \,  \OneVec_n^*\left(B_k\circ \overline{L}_r\right)\OneVec_q\\
  &\qquad\qquad= \OneVec_n^*\left(\ov{B}_l\circ \left(\sum_{r=1}^m \OneVec_n^*\left(B_k\circ \overline{L}_r\right)\OneVec_q\, A_r \right)\right)\OneVec_q
  = \OneVec_n^*\left(\ov{B}_l\circ L_k\right)\OneVec_q.
\end{align*}
Hence (iii) implies \eqref{SelfAdjId}. To complete the proof we show that (ii) holds, still assuming (iii), which means also \eqref{SelfAdjId} holds. Using these we find that
\begin{align*}
L & = \sum_{k=1}^m \ov{A}_k \otimes L_k
= \sum_{k,l=1}^m \OneVec_n^*\left(B_k\circ \overline{L}_l\right)\OneVec_q\, \ov{A}_k \otimes  A_l\\
&= \sum_{k,l=1}^m \OneVec_n^*\left(\ov{B}_l\circ L_k\right)\OneVec_q\, \ov{A}_k \otimes  A_l
= \sum_{l=1}^m \ov{L}_l \otimes A_l.\qedhere
\end{align*}
\end{proof}

As a direct consequence of Proposition \ref{P:*linearChar}, either (ii) or (iii), we obtain the following observation.

\begin{corollary}\label{C:spanA1Am}
Assume $\cL$ in \eqref{cL} is $*$-linear. Choose $L_1,\ldots,L_m$ so that \eqref{LkCond} holds and define $A_1,\ldots,A_m$ and $B_1,\ldots,B_m$ by \eqref{AkBk} and \eqref{LkLij}.
Then
\[
\tu{span}\{A_1,\ldots, A_m\}=\tu{span}\{L_{ij}\colon i=1,\ldots,n,\, j=1,\ldots,q\}= \tu{span}\{L_1,\ldots, L_m\}.
\]
\end{corollary}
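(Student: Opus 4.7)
The plan is to observe that the equality of the last two spans is built into the defining assumption \eqref{LkCond}, so the only content of the corollary is the identity
\[
\tu{span}\{A_1,\ldots,A_m\}=\tu{span}\{L_{ij}\colon i=1,\ldots,n,\ j=1,\ldots,q\}.
\]
Accordingly, I would split this into the two inclusions and handle one of them directly from Proposition \ref{P:*linearChar}, with the reverse inclusion obtained for free by a dimension count.

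For the inclusion $\tu{span}\{L_{ij}\}\subseteq \tu{span}\{A_1,\ldots,A_m\}$, I would apply Proposition \ref{P:*linearChar}. Since $\cL$ is $*$-linear, condition (iii) of that proposition gives an explicit formula expressing each $L_k$ as a linear combination of $A_1,\ldots,A_m$. Combined with \eqref{LkCond}, this yields
\[
\tu{span}\{L_{ij}\}=\tu{span}\{L_1,\ldots,L_m\}\subseteq \tu{span}\{A_1,\ldots,A_m\}.
\]
(Equivalently, one could use condition (ii), $L=\sum_{k=1}^m \ov{L}_k\otimes A_k$, and read off block entries to conclude that each $L_{ij}$ is a linear combination of the $A_k$'s; either route is short.)

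For the reverse inclusion, I would invoke the formula \eqref{mAlt}, which tells us that the left-hand side has dimension exactly $m$. On the other hand, $\tu{span}\{A_1,\ldots,A_m\}$ is spanned by $m$ matrices and hence has dimension at most $m$. Since we have just shown that this $m$-dimensional space contains an $m$-dimensional subspace, equality of the two spans is forced. A by-product of the argument, worth noting, is that $A_1,\ldots,A_m$ are automatically linearly independent.

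There is no real obstacle here: the work has already been done in Proposition \ref{P:*linearChar}, and the corollary is essentially a dimension-counting packaging of condition (iii) together with \eqref{LkCond} and \eqref{mAlt}. The only care needed is to recognize which of the three statements (i)--(iii) in Proposition \ref{P:*linearChar} is most directly suited to producing the inclusion, and then to close the argument by a dimension comparison rather than by constructing an explicit inverse decomposition.
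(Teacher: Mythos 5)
Your proposal is correct and follows essentially the same route as the paper, which states the corollary as a direct consequence of Proposition \ref{P:*linearChar} (ii) or (iii): condition (iii) gives $\tu{span}\{L_1,\ldots,L_m\}\subseteq\tu{span}\{A_1,\ldots,A_m\}$, and your dimension count via \eqref{mAlt} is the natural way to close the argument. The observation that $A_1,\ldots,A_m$ are then automatically linearly independent is also consistent with how the paper uses these matrices later.
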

Before we prove Theorem \ref{T:HillRep} we first prove a useful Lemma.
\begin{lemma}\label{L:MatTrans}Assume $\cL$ in \eqref{cL} is $*$-linear and define $L$ by \eqref{Ldef}. Let $K_1,\ldots, K_m \in\BF^{n \times q}$ so that
\[
\tu{span}\{K_1,\ldots, K_m\}=\tu{span}\{L_{ij}\colon i=1,\ldots,n,\, j=1,\ldots,q\}.
\]
Set
\begin{equation}\label{MatTrans}
\wtilK=\mat{K_1\\\vdots\\ K_m}\in\BF^{mn \times q}  \ands
\whatK=\mat{\widehat{K}_1 \\ \vdots \\ \widehat{K}_m}\in\BF^{m \times nq}
\end{equation}
where $\whatK_l:=\vect_{n \times q}\left(\ov{K}_l\right)^{T}$ for $l=1,\ldots,m$. Then $\rank \whatK=m$, hence $\whatK$ has full row rank and we have\begin{equation}\label{KtilKhatRel}\left(I_m\otimes x\right)^T \wtilK = \ov{\whatK} \left(I_q \otimes x \right),\quad x\in\BF^n.\end{equation}
\end{lemma}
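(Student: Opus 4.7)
The plan is to verify the two claims by direct computation, since the lemma is essentially an unwinding of the definitions and does not require deeper use of $*$-linearity beyond what is encoded in the hypothesis (the statement itself follows even without $*$-linearity, but we will just state the argument in the given context).

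For the rank claim, I would first observe that the span of $\{K_1,\ldots,K_m\}$ has dimension $m = \rank\BL = \dim\tu{span}\{L_{ij}\}$ by \eqref{mAlt}. Since the set has exactly $m$ elements generating an $m$-dimensional space, $K_1,\ldots,K_m$ must be linearly independent in $\BF^{n\times q}$. Complex conjugation is a (conjugate-)linear bijection on $\BF^{n\times q}$, and $\vect_{n\times q}$ is a linear bijection $\BF^{n\times q}\to\BF^{nq}$; hence $\vect_{n\times q}(\ov{K}_1),\ldots,\vect_{n\times q}(\ov{K}_m)$ are linearly independent in $\BF^{nq}$. These are precisely the transposed rows of $\whatK$, so $\rank\whatK=m$ as desired.

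For the identity \eqref{KtilKhatRel}, I would compute both sides explicitly. Since $(I_m\otimes x)^T=I_m\otimes x^T$, the block structure of $\wtilK$ gives
\begin{equation*}
(I_m\otimes x)^T\wtilK = \mat{x^T K_1\\ \vdots \\ x^T K_m}\in\BF^{m\times q}.
\end{equation*}
On the right side, $\ov{\whatK}$ has rows $\ov{\whatK}_l=\vect_{n\times q}(K_l)^T$. Multiplying by $I_q\otimes x$ and testing against the standard basis $e_j^{(q)}$ of $\BF^q$, the $(l,j)$-entry of $\ov{\whatK}(I_q\otimes x)$ is
\begin{equation*}
\vect_{n\times q}(K_l)^T(e_j^{(q)}\otimes x) = x^T K_l e_j^{(q)},
\end{equation*}
where the last equality is identity \eqref{VectForm1} applied with $W=K_l$, $z=e_j^{(q)}$. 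Assembling over $j$ gives $\ov{\whatK}_l(I_q\otimes x)=x^T K_l$, so $\ov{\whatK}(I_q\otimes x)$ has the same block row structure as $(I_m\otimes x)^T\wtilK$, establishing \eqref{KtilKhatRel}.

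There is no real obstacle here; the only thing to be careful about is the bookkeeping between transpose, conjugate, and the order of factors in the Kronecker product. The key bookkeeping step is recognizing that $\ov{\whatK}_l=\vect_{n\times q}(K_l)^T$ (rather than $\vect_{n\times q}(\ov K_l)^T$, which is $\whatK_l$ itself), so that \eqref{VectForm1} applies without any stray conjugate. Once this is in place, both sides reduce to the same $m\times q$ matrix with $l$-th row $x^T K_l$.
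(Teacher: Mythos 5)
Your proof is correct and follows essentially the same route as the paper: the identity \eqref{KtilKhatRel} is verified by the same computation via \eqref{VectForm1}, and the rank claim reduces in both cases to the linear independence of $\vect_{n\times q}(\ov K_1),\ldots,\vect_{n\times q}(\ov K_m)$ (the paper phrases this through an orthogonality argument, you phrase it directly through the bijectivity of conjugation and vectorization, but the content is identical). A small plus of your write-up is that you explicitly justify the linear independence of $K_1,\ldots,K_m$ from the dimension count $m=\dim\tu{span}\{L_{ij}\}$, which the paper's proof uses without comment.
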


\begin{proof}[\bf Proof]
Using \eqref{VectForm1} we obtain \eqref{KtilKhatRel} from the following computation:\begin{align*}\ov{\widehat{K}}\left(I_q \otimes x\right)&=\mat{\vect_{n \times q}\left(K_1\right)^T\left(I_q \otimes x\right)\\\vdots \\ \vect_{n \times q}\left(K_m\right)^T\left(I_q \otimes x\right)}=\begin{bmatrix}x^TK_1 \\ \vdots \\ x^TK_m \end{bmatrix}=\left(I_m \otimes x\right)^T\widetilde{K}.\end{align*}
To see that $\widehat{K}$ has full row rank, let $v\perp \im \whatK$. Then for all $u \in \BF^{nq}$ we have
\begin{equation*}
\begin{aligned}
0 &= v^*\widehat{K}u
= v^*\begin{bmatrix} \vect_{n \times q}\left(\ov{K}_1 \right)^Tu \\ \vdots \\ \vect_{n \times q}\left(\ov{K}_m\right)^Tu \end{bmatrix} \\ &
= \sum_{l=1}^m \ov{v}_l \vect_{n \times q}\left(\ov{K}_l\right)^Tu
= \vect_{n \times q}\left(\sum_{l=1}^m \ov{v}_l \ov{K}_l \right)^Tu.
\end{aligned}
\end{equation*}This implies that $\vect_{n \times q}\left(\sum_{l=1}^m \ov{v}_l \ov{K}_l \right)=0$, which is true if and only if $\sum_{l=1}^m \ov{v}_l \ov{K}_l=0$. Since $K_1,\ldots,K_m$ are linearly independent, so are $\ov{K}_1,\ldots,\ov{K}_m$, and hence $v_1=\cdots = v_m=0$. Thus $\left(\im \whatK\right)^\perp=\{0\}$, proving that $\im \whatK=\BF^m$ and $\rank \whatK=m$.
\end{proof}

\begin{proof}[\bf Proof of Theorem \ref{T:HillRep}]
The equivalence of the formulas in \eqref{cLLBL2} and the corresponding formulas in \eqref{cLLBL1} follows directly from the identity \eqref{SumCirc}. To see that $L$ is given by \eqref{cLLBL2}, simply insert the formula for $L_k$ in item (iii) of Proposition \ref{P:*linearChar} into the formula for $L$ in \eqref{LLk}.

To see that $\BL$ is as in \eqref{cLLBL2}, let $\La=\La_{(n,q)}^{(n,q)}$ and use \eqref{InvolId1} to obtain
\begin{align*}
\BL & =\La(L)=\sum_{k,l=1}^m \BH_{kl}\, \La\left(\ov{A}_k\otimes A_l\right)
= \sum_{k,l=1}^m \BH_{kl}\, \vect(A_l) \vect\left(\ov{A}_k\right)^T,
\end{align*}
which is equivalent to $\BL=\widehat{A}^*\BH^T\widehat{A},$ with $\widehat{A}$ defined as in the theorem. For the formula for $\cL$, combine \eqref{Linv} with the formula for $L$ in \eqref{cLLBL2} together with the identity:
\begin{align*}
\cL(V) & = \vect^{-1}\left(\sum_{k,l=1}^m \BH_{kl}\, \left(\ov{A}_k\otimes A_l\right) \vect(V)\right)
= \vect^{-1}\left(\sum_{k,l=1}^m \BH_{kl}\,  \vect\left(A_l V A_k^*\right)\right)\\
&= \sum_{k,l=1}^m \BH_{kl} \vect^{-1}\left( \vect(A_l V A_k^*)\right)
=\sum_{k,l=1}^m \BH_{kl}\, A_l V A_k^*.
\end{align*}
That $\BH$ is in $\cH_m$ is just are reformulation of \eqref{SelfAdjId}. Hence it remains to show that $\BH$ is invertible, or equivalently, $\rank \, \BH=m$. We use that $\BL=\what{A}^*\BH^T\what{A}$ and $\rank \what{A}= m$ to conclude that $m=\rank \BL = \rank \BH^T= \rank \BH.$
\end{proof}

We can now prove our claim regarding the number of matrices $A_k$ in a minimal Hill representation.

\begin{corollary}\label{C:minimal}
The minimal number of matrices $A_k$ appearing in a Hill representation is equal to $\rank \BL$.
\end{corollary}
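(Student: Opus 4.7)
The plan is to prove the corollary by establishing matching upper and lower bounds on the minimal number $m$, both of which draw directly on machinery already developed in the section.

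For the upper bound, I would simply appeal to Theorem \ref{T:HillRep}: since it constructs an explicit Hill representation of $\cL$ using exactly $m=\rank\BL$ matrices $A_1,\ldots,A_m$ (via any choice of $L_1,\ldots,L_m$ satisfying \eqref{LkCond}), the minimal number of matrices that can appear in a Hill representation is at most $\rank\BL$.

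For the lower bound, I would show that \emph{every} Hill representation, not just the canonical one from Theorem \ref{T:HillRep}, forces the factorization $\BL=\widehat{A}^*\BH^T\widehat{A}$. Starting from an arbitrary Hill representation $\cL(V)=\sum_{k,l=1}^{m'}\BH'_{kl}A'_lV(A'_k)^*$, vectorizing and using the standard identity $\vect(A'_lV(A'_k)^*)=(\overline{A'_k}\otimes A'_l)\vect(V)$ yields the matricization formula $L=\sum_{k,l}\BH'_{kl}\,\overline{A'_k}\otimes A'_l$. Applying $\La=\La_{(n,q)}^{(n,q)}$ and invoking the second identity of Lemma \ref{L:DiadTens} then gives
\[
\BL=\La(L)=\sum_{k,l=1}^{m'}\BH'_{kl}\,\vect(A'_l)\vect(\overline{A'_k})^T=(\widehat{A'})^*(\BH')^T\widehat{A'},
\]
where $\widehat{A'}\in\BF^{m'\times nq}$ has rows $\vect(\overline{A'_k})^T$. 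From this factorization I immediately get $\rank\BL\le\rank\widehat{A'}\le m'$, which is exactly the desired lower bound.

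Combining the two bounds gives $m'\ge\rank\BL$ for any Hill representation while equality is attained by the construction in Theorem \ref{T:HillRep}, so the minimum is exactly $\rank\BL$. There is no real obstacle here; the only conceptual point worth stressing is that the formula $\BL=\widehat{A}^*\BH^T\widehat{A}$ derived inside the proof of Theorem \ref{T:HillRep} actually depends only on the Hill representation itself, and not on the particular $L_1,\ldots,L_m$-driven construction, which is what makes the lower bound argument apply universally.
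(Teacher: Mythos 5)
Your proof is correct and follows essentially the same route as the paper's: the paper's one-line argument likewise rests on the factorization $\BL=\widehat{A}^*\BH^T\widehat{A}$ (which gives $\rank\BL\le m'$ for any Hill representation) together with the construction of Theorem \ref{T:HillRep} attaining $m=\rank\BL$. You merely make explicit the point the paper leaves implicit, namely that this factorization is forced by an arbitrary Hill representation and not only by the canonical one built from $L_1,\ldots,L_m$.
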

\begin{proof} [\bf Proof] This follows directly from $\rank \BL = \rank \BH$, since the minimum number of matrices $A_k$ in a Hill representation is equal to the size of $\BH$, and we know $\BH$ is invertible. \end{proof}


Since $\BH$ in Theorem \ref{T:HillRep} is Hermitian we have $\ov{\BH}=\ov{\BH}^*=\BH^T$. Together with the formula for $\BL$ in \eqref{cLLBL2} and the fact that $\BH$ is invertible, yields the following corollary.

\begin{corollary}\label{C:BLform}
Assume $\cL$ as in \eqref{cL} is $*$-linear. Define $\BL$ as in \eqref{BLdef} and set $m=\rank \, \BL$.
Choose $L_1,\ldots,L_m\in\BF^{n \times q}$ so that \eqref{LkCond} holds and define $\whatA$ as in Theorem \ref{T:HillRep}. Then $\BL=\widehat{A}^*\ov{\BH}\widehat{A}$. Moreover, $\whatA$ has full row rank and $\kr \what A=\kr \BL$.
\end{corollary}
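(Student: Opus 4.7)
The plan is to read off everything directly from Theorem \ref{T:HillRep} together with Lemma \ref{L:MatTrans} and Corollary \ref{C:spanA1Am}. First, Theorem \ref{T:HillRep} gives the factorization $\BL=\whatA^{*}\BH^{T}\whatA$ and, also, that $\BH\in\cH_m$. The Hermitian condition yields $\BH^{T}=\ov{\BH^{*}}=\ov{\BH}$, so $\BL=\whatA^{*}\ov{\BH}\whatA$ is immediate.

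Next, to see that $\whatA$ has full row rank, I apply Lemma \ref{L:MatTrans} with $K_{l}:=A_{l}$. The hypothesis of that lemma is exactly the span condition
\[
\tu{span}\{A_1,\ldots,A_m\}=\tu{span}\{L_{ij}\colon i=1,\ldots,n,\,j=1,\ldots,q\},
\]
which is supplied by Corollary \ref{C:spanA1Am}. Thus Lemma \ref{L:MatTrans} gives $\rank\whatA=m$, i.e.\ $\whatA$ has full row rank.

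For the kernel identity, the inclusion $\kr\whatA\subseteq\kr\BL$ is immediate from the factorization $\BL=\whatA^{*}\ov{\BH}\whatA$. For the reverse inclusion, Theorem \ref{T:HillRep} guarantees that $\BH$, and hence $\ov{\BH}$, is invertible, while full row rank of $\whatA$ means $\whatA^{*}$ has trivial kernel. Therefore $\BL u=0$ forces $\ov{\BH}\whatA u=0$ and then $\whatA u=0$, giving $\kr\BL\subseteq\kr\whatA$. A dimension count is also available as an alternative finish: $\dim\kr\whatA=nq-m=nq-\rank\BL=\dim\kr\BL$, combined with the easy inclusion.

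I do not anticipate any real obstacle here: every ingredient (the explicit factorization, Hermiticity and invertibility of $\BH$, the span characterization of the $A_k$, and the full row rank lemma) is already available in the preceding material, and the corollary is essentially a bookkeeping consequence.
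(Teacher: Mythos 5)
Your proposal is correct and follows essentially the same route as the paper: the identity $\BL=\whatA^{*}\ov{\BH}\whatA$ comes from Theorem \ref{T:HillRep} plus $\BH\in\cH_m$ (so $\BH^{T}=\ov{\BH}$), full row rank of $\whatA$ from Lemma \ref{L:MatTrans} via Corollary \ref{C:spanA1Am}, and the kernel equality from invertibility of $\BH$ together with the trivial inclusion (the paper phrases this last step as the rank count $\rank\BL=\rank\BH=\rank\whatA$, which is your alternative finish). No gaps.
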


In particular, the matrices $A_1,\ldots,A_m$ in the minimal Hill representation of Theorem \ref{T:HillRep} also form a basis for the span of the block matrices $L_{ij}$ in $L$. A closer inspection of item (ii) Proposition \ref{P:*linearChar} in fact tells us that in case $\cL$ is $*$-linear, then a minimal Hill representation for $\cL$ can be formed with any collection $A_1,\ldots,A_m$ in $\BF^{n \times q}$ that forms a basis for the span of the matrices $L_{ij}$.

\begin{theorem}\label{T:HillRepChar}
Let $\cL$ as in \eqref{cL} be $*$-linear. Define $L$ as in \eqref{Ldef}, $\BL$ as in \eqref{BLdef} and let $m=\rank \, \BL$. For all $A_1,\ldots,A_m \in\BF^{n \times q}$ so that
\begin{equation}\label{SpanCond}
\tu{span}\{A_1,\ldots, A_m\}=\tu{span}\{L_{ij}\colon i=1,\ldots,n,\, j=1,\ldots,q\},
\end{equation}
$\cL$ admits a minimal Hill representation \eqref{HillRep} for some $\BH\in\cH_m$. Conversely, if $\cL$ admits a minimal Hill representation \eqref{HillRep}, then the matrices $A_1,\ldots,A_m$ in \eqref{HillRep} satisfy \eqref{SpanCond}. Furthermore, the Hill matrix $\BH$ associated with $\cL$ and the matrices $L_1,\ldots, L_m$ is uniquely determined by
\begin{equation}\label{HillMatRel}\begin{aligned}
\mat{L_1\\ \vdots \\ L_m} &=\left(\BH \otimes I_n\right) \mat{A_1\\ \vdots \\ A_m},\quad \mbox{where} \quad L_k=\left[\ov{\la}^{ij}_k\right]\in \BF^{n \times q},\\ \mbox{with} \quad L_{ij}&=\sum_{k=1}^m\la^{ij}_k A_k \quad \text{for} \quad i=1,...,n \quad \text{and} \quad j=1,...,q.\end{aligned}
\end{equation}
\end{theorem}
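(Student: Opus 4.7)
The plan is to prove the three claims of the theorem in sequence: the existence of a minimal Hill representation for the prescribed $A_k$'s, the necessity of the span condition \eqref{SpanCond}, and finally the formula \eqref{HillMatRel} together with its uniqueness. The key enabler is Theorem \ref{T:HillRep}, which supplies an initial minimal Hill representation of $\cL$ that can be transported to any other basis of $\tu{span}\{L_{ij}\}$ via a change of basis.

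For the forward direction, I would choose $L_1,\ldots,L_m\in\BF^{n\times q}$ satisfying \eqref{LkCond} and let $A_1',\ldots,A_m',\BH'$ be the minimal Hill representation produced by Theorem \ref{T:HillRep}, with $\BH'\in\cH_m$. By Corollary \ref{C:spanA1Am} we have $\tu{span}\{A_k'\}=\tu{span}\{L_{ij}\}=\tu{span}\{A_k\}$, so both $\{A_k\}$ and $\{A_k'\}$ are bases of the same $m$-dimensional subspace, connected by an invertible matrix $\Psi=[\Psi_{lj}]\in\BF^{m\times m}$ with $A_l'=\sum_{j}\Psi_{lj}A_j$. Substituting this into the Hill representation $\cL(V)=\sum_{k,l}\BH'_{kl}A_l'V(A_k')^*$ and reindexing produces $\cL(V)=\sum_{i,j}(\Psi^*\BH'\Psi)_{ij}A_jVA_i^*$. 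Setting $\BH:=\Psi^*\BH'\Psi$, which lies in $\cH_m$ since $\BH'$ does (and is invertible since $\Psi$ and $\BH'$ are), gives the required minimal Hill representation of $\cL$ based on $A_1,\ldots,A_m$.

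For the converse, suppose $\cL(V)=\sum_{k,l}\BH_{kl}A_lVA_k^*$ is a minimal Hill representation. Vectorizing and using $\vect(AXB)=(B^T\otimes A)\vect(X)$ yields $L=\sum_{k,l}\BH_{kl}\,\ov{A}_k\otimes A_l$; the $(i,j)$-block then evaluates to $L_{ij}=\sum_{l}\bigl(\sum_k\BH_{kl}(\ov{A}_k)_{ij}\bigr)A_l$, so $\tu{span}\{L_{ij}\}\subseteq\tu{span}\{A_1,\ldots,A_m\}$. Minimality forces $A_1,\ldots,A_m$ to be linearly independent (as noted at the opening of Section \ref{S:HillReps}), so the right-hand side has dimension $m=\dim\tu{span}\{L_{ij}\}$ and the inclusion is an equality, giving \eqref{SpanCond}.

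For the formula \eqref{HillMatRel}, the unique expansion $L_{ij}=\sum_k\la_k^{ij}A_k$ combined with the block computation just performed yields $\la_l^{ij}=\sum_k\BH_{kl}(\ov{A}_k)_{ij}$. Conjugating and invoking the Hermitian property $\BH=\BH^*$ gives $(L_l)_{ij}=\ov{\la}_l^{ij}=\sum_k\BH_{lk}(A_k)_{ij}$, i.e., $L_l=\sum_k\BH_{lk}A_k$, which in stacked block form is precisely the identity in \eqref{HillMatRel}. Uniqueness of $\BH$ then follows immediately from the linear independence of the $A_k$'s. The main technical care required throughout is the bookkeeping of conjugations and adjoints; in particular, the Hermitian property $\BH=\BH^*$ is exactly what converts the coefficient relation $\la_l^{ij}=\sum_k\BH_{kl}(\ov{A}_k)_{ij}$ into the clean formula involving $\BH\otimes I_n$.
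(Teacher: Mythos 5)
Your proof is correct, and the converse direction and the derivation of \eqref{HillMatRel} follow essentially the same computations as the paper (expand $L=\sum_{k,l}\BH_{kl}\,\ov{A}_k\otimes A_l$, read off the blocks, use linear independence and a dimension count, then use $\BH=\BH^*$ to turn $\ov{\BH}_{kl}$ into $\BH_{lk}$). The forward direction, however, is genuinely different. The paper proves existence by a role-swapping argument: from $L_{ij}=\sum_k\la_k^{ij}A_k$ it writes $L=\sum_k\ov{L}_k\otimes A_k$ with $L_k=[\ov{\la}_k^{ij}]$, invokes the equivalence (i)$\Leftrightarrow$(ii) of Proposition \ref{P:*linearChar} to conclude $L=\sum_k\ov{A}_k\otimes L_k$ as well, and then feeds these $L_k$ back into Theorem \ref{T:HillRep}; this has the advantage of exhibiting the prescribed $A_1,\ldots,A_m$ as an output of the canonical construction \eqref{AkBk}--\eqref{Hill}, with $\BH$ the Hill matrix of those $L_k$. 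You instead start from one representation produced by Theorem \ref{T:HillRep} and transport it by the congruence $\BH=\Psi^*\BH'\Psi$ induced by the change of basis $A_l'=\sum_j\Psi_{lj}A_j$; this is a cleaner, more elementary route to the existence of \emph{some} Hermitian invertible $\BH$, and it in effect anticipates the transformation law $\BH=\Phi\BH'\Phi^*$ that the paper only establishes later in Theorem \ref{T:LkLk'Rel}. One small point to make explicit: in the "Furthermore" step you invoke $\BH=\BH^*$ for the Hill matrix of the given minimal representation; for the $\BH$ you constructed this is immediate, and for an arbitrary minimal representation it follows from $\BL=\whatA^*\BH^T\whatA$ with $\whatA$ of full row rank and $\BL$ Hermitian (or from the fact, cited in the introduction from \cite{PH81}, that minimality plus $*$-linearity forces $\BH$ Hermitian) -- the paper is equally terse here, so this is not a gap, just worth a sentence.
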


\begin{proof}[\bf Proof]
Let $A_1,\ldots,A_m\in\BF^{n\times q}$ satisfy \eqref{SpanCond}. Then there exist scalars $\la^{ij}_k \in \BF$ for $i=1,\ldots,n$, $j=1,\ldots,q$ and $k=1,\ldots,m$ so that
\[
L_{ij}=\sum_{k=1}^m \la^{ij}_k A_k,\quad i=1,\ldots,n, \quad j=1,\ldots,q.
\]
Then for $L_k=\left[\ov{\la}^{ij}_k\right]\in \BF^{n \times q}$, $k=1,\ldots,m$, we have $L=\sum_{k=1}^m \ov{L}_k \otimes A_k$. Using this representation for $L$, instead of that in \eqref{LLk}, it follows from Proposition \ref{P:*linearChar} that
\[
L=\sum_{k=1}^m \ov{A}_k \otimes L_k.
\]
Hence $L$ is also as in the first identity of \eqref{LLk}. By Theorem \ref{T:HillRep} $\cL$ admits a minimal Hill representation \eqref{HillRep} with the selected $A_1,\ldots,A_m$ and with $\BH$ as in \eqref{Hill} Hermitian.

Conversely, assume $\cL$ admits a minimal Hill representation \eqref{HillRep}. By the same argument as in the proof of Theorem \ref{T:HillRep}, we have
\begin{align*}
L & = \sum_{k,l=1}^m \BH_{kl}\, \ov{A}_k \otimes A_l =\sum_{l=1}^m \left(\sum_{k=1}^m \BH_{kl} \ov{A}_k\right)\otimes A_l.
\end{align*}
This implies that $L_{ij}=\sum_{l=1}^m \left(\sum_{k=1}^m \BH_{kl} \al^{ij}_k\right) A_l$, for all $i,j$, with $\al^{ij}_k$ as in \eqref{AkBk}.  Hence the span of the matrices $L_{ij}$ is contained in the span of $A_1,\ldots,A_m$. Now the equality \eqref{SpanCond} follows by \eqref{mAlt} and a dimension argument.

Finally, to see that \eqref{HillMatRel} holds, note that the above formula for $L$ shows that $L$ admits a representation as in \eqref{LLk} with $L_l =\sum_{k=1}^m \ov{\BH}_{kl} A_k$, $l=1,\ldots,m$, which translates to \eqref{HillMatRel}. That the matrices $L_1,\ldots,L_m$ are obtained from $A_1,\ldots,A_m$ follows from the analysis in the first part of the proof.
\end{proof}

\begin{proof}[\bf Proof of Theorem \ref{T:main1}]
That for any choice of matrices $L_1,\ldots,L_m$ satisfying \eqref{SpanL1Lm}, the right hand side of \eqref{HillRepIntro} with $A_1,\ldots,A_m$ as in \eqref{AkBk-intro} and $\BH$ as in \eqref{Hill-intro} provides a minimal Hill representation follows from Theorem \ref{T:HillRep} together with Corollary \ref{C:minimal}. By Corollary \ref{C:spanA1Am} it follows that the matrices $A_1,\ldots,A_m$ satisfy \eqref{A1Amcon}. Conversely, we obtain from Theorem \ref{T:HillRepChar} that any $A_1,\ldots,A_m$ satisfying \eqref{A1Amcon} appear as the matrices in a minimal Hill representation. Finally, for any minimal Hill representation \eqref{HillRep} of $\cL$, using that $L$ is as in \eqref{cLLBL2} it follows that the block entries $L_{ij}$ of $L$ are all in the span of $A_1,\ldots,A_m$, which together with the fact that $m$ is equal to \eqref{mAlt}, by Corollary \ref{C:minimal}, implies that \eqref{A1Amcon} holds.
\end{proof}

The identity \eqref{HillMatRel} shows how the matrices $L_1,\ldots,L_m$ and $A_1,\ldots,A_m$ in the representation \eqref{LLk} are related when $\cL$ is $*$-linear, and determine each other uniquely, since $\BH$ is invertible. In fact, for two selections $L_1,\ldots,L_m$ and $L'_1,\ldots,L_m'$ of matrices in $\BF^{n \times q}$ so that their span coincides with the span of the matrices $L_{ij}$ such a relation exist as well as between the associated Hill matrices.

Next we explain how the matrices associated with two sets of linear independent matrices $L_1,\ldots,L_m$ and $L_1^\prime,\ldots,L_m^\prime$ so that their spans correspond with the span of the matrices $L_{ij}$ that constitute $L$ relate.

\begin{theorem}\label{T:LkLk'Rel}
Assume $\cL$ as in \eqref{cL} is $*$-linear. Define $L$ as in \eqref{Ldef}, $\BL$ as in \eqref{BLdef} and set $m=\rank \, \BL$. Let $L_1,\ldots,L_m$ and $L_1^\prime,\ldots,L_m^\prime$ be  in $\BF^{n \times q}$ so that
\begin{equation*}
\tu{span}\{L_1,\ldots,L_m\}=\tu{span}\{L_{ij} \colon i=1,\ldots,n, \,j=1,..,q\}=\tu{span}\{L_1^\prime,\ldots,L_m^\prime\}.
\end{equation*}
For $k=1,\ldots,m$ define $A_k$ and $B_k$ as in \eqref{AkBk} and $\BH$ as in \eqref{Hill}. Analogously, define $A_k^\prime$ and $B_k^\prime$ and $\BH^\prime$ for $L_1^\prime,\ldots,L_m^\prime$. Also, define $\whatL$, $\whatL^\prime$, $\widetilde{L}$, $\widetilde{L}^\prime$, $\whatA$, $\whatA^\prime$, $\widetilde{A}$ and $\widetilde{A}^\prime$ in analogy to \eqref{MatTrans} for $L_1,\ldots,L_m$,  $L_1^\prime,\ldots,L_m^\prime$, $A_1,\ldots,A_m$ and $A_1^\prime,\ldots,A_m^\prime$, respectively. Define
\begin{equation}\label{PhiXi}
\Phi:=\left[\OneVec_n^*\left(B_k\circ \ov{A}_l^{\prime}\right)\OneVec_q\right]_{k,l=1}^m
\ands
\Xi :=\left[\OneVec_n^*\left(B_k\circ \ov{L}_l^{\prime}\right)\OneVec_q\right]_{k,l=1}^m.
\end{equation}
Then
\begin{equation}\label{whatRels}
\begin{aligned}
\whatL=\ov{\BH}\whatA,\quad  \whatL^\prime=\ov{\BH}^\prime\whatA^\prime,\quad
\whatL=\ov{\Phi}\whatL^\prime,\quad  \ov{\Phi}^* \whatA=\whatA^\prime,\\
\whatL=\ov{\Xi}\whatA^\prime,\quad \whatL^\prime=\ov{\Xi}^* \whatA
\ands \BH=\Phi \BH'\Phi^*.
\end{aligned}
\end{equation}
Moreover, $\BH=\Phi\Xi^*$, $\Phi\BH^\prime=\Xi$, and $\Phi$ and $\Xi$ are invertible with inverses
\[
\Phi^{-1}=\left[\OneVec_n^*\left(B_k^{\prime}\circ \ov{A}_l\right)\OneVec_q\right]_{k,l=1}^m \ands
\Xi^*= \left[\OneVec_n^*\left(B_k^\prime\circ \ov{L}_l\right)\OneVec_q\right]_{k,l=1}^m.
\]
Furthermore, $\kr \whatL=\kr\whatL^\prime=\kr\whatA=\kr\whatA^\prime=\kr\BL$.
\end{theorem}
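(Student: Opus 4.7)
The plan is to reduce every claimed identity to a combination of three basic facts: (i) the relation $L_k=\sum_l\BH_{kl}A_l$ from \eqref{HillMatRel} (and its primed version), which becomes $\whatL=\ov{\BH}\whatA$ after conjugation and vectorization; (ii) Proposition \ref{P:*linearChar}(ii) applied to both bases; and (iii) Proposition \ref{P:*linearChar}(iii) applied in the primed system. Once $\Phi,\BH,\BH',\Xi$ are linked, every remaining identity is a linear-algebraic consequence.

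First I would establish $\whatL=\ov{\BH}\whatA$ and $\whatL'=\ov{\BH}'\whatA'$ directly: reading \eqref{HillMatRel} row by row gives $L_k=\sum_l\BH_{kl}A_l$, so $\whatL_k=\vect_{n\times q}(\ov{L}_k)^T=\sum_l\ov{\BH}_{kl}\vect_{n\times q}(\ov{A}_l)^T=\sum_l\ov{\BH}_{kl}\whatA_l$. Next, since both $\{L_k\}$ and $\{L_k'\}$ are bases of the common span, one computes $L_k=\sum_{i,j}\beta_{ij}^kL_{ij}=\sum_{i,j}\beta_{ij}^k\sum_l\alpha'^{ij}_lL_l'=\sum_l\Phi_{kl}L_l'$ with $\Phi_{kl}=\sum_{i,j}\beta_{ij}^k\alpha'^{ij}_l=\OneVec_n^*(B_k\circ\ov{A}'_l)\OneVec_q$, matching \eqref{PhiXi}. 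The swapped-roles version of this computation yields $L_k'=\sum_l\OneVec_n^*(B'_k\circ\ov{A}_l)\OneVec_q\,L_l$, and substituting one into the other together with linear independence of $L_1,\ldots,L_m$ forces the product of the two coefficient matrices to be $I_m$, giving both invertibility of $\Phi$ and the inverse formula. Conjugating $L_k=\sum_l\Phi_{kl}L_l'$ and vectorizing yields $\whatL=\ov{\Phi}\whatL'$.

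For the $A$-relation, Proposition \ref{P:*linearChar}(ii) applied to both bases gives $L=\sum_k\ov{L}_k\otimes A_k=\sum_l\ov{L}_l'\otimes A_l'$. Substituting $\ov{L}_k=\sum_l\ov{\Phi}_{kl}\ov{L}_l'$ and matching coefficients (using that the $\ov{L}_l'$ are linearly independent) yields $A_l'=\sum_k\ov{\Phi}_{kl}A_k$, i.e.\ $\whatA'=\ov{\Phi}^*\whatA$. Combining this with $\whatL=\ov{\BH}\whatA$, $\whatL=\ov{\Phi}\whatL'$, and $\whatL'=\ov{\BH}'\whatA'$ gives $\ov{\BH}\whatA=\ov{\Phi}\,\ov{\BH}'\,\ov{\Phi}^*\whatA$; since $\whatA$ has full row rank by Lemma \ref{L:MatTrans}, we may cancel it on the right to obtain $\BH=\Phi\BH'\Phi^*$.

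For $\Xi$, apply Proposition \ref{P:*linearChar}(iii) in the primed system, $L_l'=\sum_r\BH'_{lr}A_r'$, so
$\Xi_{kl}=\OneVec_n^*(B_k\circ\ov{L}_l')\OneVec_q=\sum_r\ov{\BH'_{lr}}\,\OneVec_n^*(B_k\circ\ov{A}_r')\OneVec_q=\sum_r\BH'_{rl}\Phi_{kr}=(\Phi\BH')_{kl}$,
where Hermiticity of $\BH'$ is used. Hence $\Xi=\Phi\BH'$ is invertible, and $\BH=\Phi\BH'\Phi^*=\Xi\Phi^*$; taking adjoints and using $\BH^*=\BH$ gives $\BH=\Phi\Xi^*$. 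The formula $\Xi^*_{kl}=\OneVec_n^*(B_k'\circ\ov{L}_l)\OneVec_q$ is proved by repeating the $\Xi=\Phi\BH'$ argument with primed and unprimed swapped: the result is $\Phi^{-1}\BH$, which equals $\Xi^*$ by the previous line. The remaining identities $\whatL=\ov{\Xi}\whatA'$ and $\whatL'=\ov{\Xi}^*\whatA$ then follow by substituting $\whatA=\ov{\Phi}^{-*}\whatA'$ into $\whatL=\ov{\BH}\whatA$ (using $\ov{\BH}\,\ov{\Phi}^{-*}=\ov{\Phi}\,\ov{\BH}'=\ov{\Xi}$) and symmetrically for the primed version. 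Finally, $\ker\whatL=\ker\whatA$ because $\BH$ is invertible, $\ker\whatA=\ker\whatA'$ because $\ov{\Phi}^*$ is invertible, and $\ker\whatA=\ker\BL$ is Corollary \ref{C:BLform}. The main obstacle is purely bookkeeping: keeping straight the interplay of complex conjugates, adjoints and transposes when passing between the $L$, $\whatL$ and $\tilL$ forms, since $\OneVec_n^*(B\circ\ov{M})\OneVec_q$ is linear in $B$ but conjugate-linear in $M$.
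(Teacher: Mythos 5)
Your proposal is correct, and its skeleton is the same as the paper's: both rest on the two Kronecker representations $L=\sum_k \ov{A}_k\otimes L_k=\sum_k\ov{L}_k\otimes A_k$ from Proposition \ref{P:*linearChar}, the extraction of coefficients via the $B_k$, and cancellation against the full-row-rank matrices $\whatA,\whatA'$ from Lemma \ref{L:MatTrans}. There are two places where your route genuinely diverges. First, you obtain $\ov{\Phi}^*\whatA=\whatA'$ early and directly, by substituting $\ov{L}_k=\sum_l\ov{\Phi}_{kl}\ov{L}'_l$ into $L=\sum_k\ov{L}_k\otimes A_k$ and matching Kronecker coefficients against $\sum_l\ov{L}'_l\otimes A'_l$ (valid since the $\ov{L}'_l$ are linearly independent); the paper instead derives this identity \emph{last}, from $\ov{\BH}\whatA=\ov{\Phi}\,\ov{\BH}'\whatA'=\ov{\BH}(\ov{\Phi}^*)^{-1}\whatA'$ and invertibility of $\BH$, after $\BH=\Phi\BH'\Phi^*$ is already in hand. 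Your ordering then lets you get $\BH=\Phi\BH'\Phi^*$ by a one-line cancellation, whereas the paper routes through $\BH=\Phi\Xi^*$ and $\Xi=\Phi\BH'$. Second, you compute $\Phi_{kl}=\sum_{i,j}\be_{ij}^k\al'^{ij}_l$ and $\Xi=\Phi\BH'$ by scalar bookkeeping (using Hermiticity of $\BH'$ to absorb the conjugation), where the paper uses the Hadamard--Kronecker identity \eqref{HadTens} with the extraction formula in \eqref{LLk}; these are the same computation in different notation. Both routes deliver all the identities in \eqref{whatRels} and the inverse formulas; your early coefficient-matching argument is arguably the more transparent way to see where $\ov{\Phi}^*\whatA=\whatA'$ comes from, at the cost of invoking the standard fact that $\sum_l X_l\otimes Y_l=0$ with $X_l$ linearly independent forces $Y_l=0$. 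The one cosmetic omission is that you do not explicitly note $\kr\whatL'=\kr\whatA'$ (via invertibility of $\BH'$), but this is symmetric to the unprimed case and harmless.
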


\begin{proof}[\bf Proof]
For now, set
\[
\Phi^\prime:=\left[\OneVec_n^*\left(B_k^{\prime}\circ \ov{A}_l\right)\OneVec_q\right]_{k,l=1}^m \ands
\Xi^\prime:=\left[\OneVec_n^*\left(B_k^\prime\circ \ov{L}_l\right)\OneVec_q\right]_{k,l=1}^m.
\]
Later in the proof we show that $\Phi^\prime=\Phi^{-1}$ and $\Xi^\prime=\Xi^*$.

Since $L=\sum_{l=1}^m \ov{A}_l^\prime \otimes L_l^\prime$ we obtain from \eqref{LLk} that
\begin{align*}
L_k
&=\left(\OneVec_n \otimes I_n\right)^*\left(\left(B_k \otimes \BBone_{n \times q}\right) \circ L\right)\left(\OneVec_q\otimes I_q\right)\\
&= \sum_{l=1}^m \left(\OneVec_n \otimes I_n\right)^*\left(\left(B_k \otimes \BBone_{n \times q}\right) \circ (\ov{A}_l^\prime \otimes L_l^\prime)\right)\left(\OneVec_q\otimes I_q\right)\\
&= \sum_{l=1}^m \left(\OneVec_n \otimes I_n\right)^*
\left((B_k \circ \ov{A}_l^\prime ) \otimes (\BBone_{n \times q} \circ L_l^\prime)
\right)\left(\OneVec_q\otimes I_q\right)\\
&= \sum_{l=1}^m \left(\OneVec_n \otimes I_n\right)^*
\left((B_k \circ \ov{A}_l^\prime ) \otimes  L_l^\prime
\right)\left(\OneVec_q\otimes I_q\right)
= \sum_{l=1}^m \OneVec_n^* (B_k \circ \ov{A}_l^\prime )\OneVec_q\, L_l^\prime.
\end{align*}
This proves that $\wtilL=(\Phi \otimes I_n) \wtilL^\prime$. According to Proposition \ref{P:*linearChar}, also $L=\sum_{l=1}^l \ov{L}_l^\prime \otimes A_l^\prime$. Repeating the above computation with this formula for $L$ yields $\wtilL=(\Xi \otimes I_n) \wtilA^\prime$. By Theorem \ref{T:HillRepChar} we also have $\wtilL=(\BH \otimes I_n) \wtilA$. Hence we have
\[
\wtilL=(\Phi \otimes I_n) \wtilL^\prime,\quad \wtilL=(\Xi \otimes I_n) \wtilA^\prime,\quad \wtilL=(\BH \otimes I_n) \wtilA.
\]
Interchanging the roles of $L_1,\ldots,L_m$ and $L_1^\prime,\ldots,L_m^\prime$ yields
\[
\wtilL^\prime=(\Phi^\prime \otimes I_n) \wtilL,\quad \wtilL^\prime=(\Xi^\prime \otimes I_n) \wtilA,\quad \wtilL^\prime=(\BH^\prime \otimes I_n) \wtilA^\prime.
\]
From $\wtilL=(\BH \otimes I_n) \wtilA$, using \eqref{KtilKhatRel}, for all $x\in\BF^n$ and $z\in\BF^q$ we find that
\begin{align*}
\BH \ov{\widehat{A}}\left(z \otimes x\right)
&= \BH\left(I_m \otimes x \right)^T\widetilde{A}z
= \left(\BH \otimes x^T \right)\widetilde{A}z =\left(I_m \otimes x^T\right)\left(\BH \otimes I_n \right)\widetilde{A}z\\
&=\left(I_m \otimes x\right)^T\widetilde{L}z
= \ov{\widehat{L}}\left(z \otimes x\right).
\end{align*}
Since all vectors in $\BF^{nq}=\BF^{n} \otimes \BF^q$ can be written as sums of pure tensors, it follows that $\BH \ov{\widehat{A}}=\ov{\widehat{L}}$, or equivalently, $\whatL=\ov{\BH}\whatA$. A similar argument applies to the other identities obtained above, resulting in
\begin{equation}\label{whatIds}
\whatL=\ov{\Phi} \whatL^\prime,\quad \whatL=\ov{\Xi} \whatA^\prime,\quad \whatL=\ov{\BH} \whatA,\quad
\whatL^\prime=\ov{\Phi}^\prime \whatL,\quad \whatL^\prime=\ov{\Xi}^\prime \whatA,\quad \whatL^\prime=\ov{\BH}^\prime \whatA^\prime.
\end{equation}
By Lemma \ref{L:MatTrans}, $\whatL$, $\whatA$, $\whatL^\prime$ and $\whatA^\prime$ all have full row rank. Hence $\Phi$, $\Xi$, $\BH$, $\Phi^\prime$, $\Xi^\prime$ and $\BH^\prime$ are all invertible; for $\BH$ and $\BH^\prime$ this already follows from Theorem \ref{T:HillRep}. It then also follows from the above identities that
$\kr \whatL=\kr\whatL^\prime=\kr\whatA=\kr\whatA^\prime$. The formula $\BL=\whatA^* \BH \whatA$ from Theorem \ref{T:HillRep} together with the fact that $\rank \BL=\rank \BH=\rank \whatA$ implies that $\kr \whatA=\kr\BL$. Furthermore, the first and fourth identity show that $\Phi^\prime=\Phi^{-1}$. In particular, we have now proved the first three identities as well as the fifth in \eqref{whatRels} and found the formula for $\Phi^{-1}$.

Using \eqref{whatIds} we also obtain that $\ov{\BH}\whatA=\whatL =\ov{\Phi}\whatL^\prime=\ov{\Phi}\ov{\Xi}^\prime \whatA$. Hence $\BH=\Phi\Xi^\prime$. Likewise we have $\BH^\prime=\Phi^\prime\Xi=\Phi^{-1}\Xi$, so that $\Xi=\Phi\BH^\prime$ and $\Xi^*=\BH^\prime \Phi^*$. Write $\phi_{kl}=\OneVec_n^*\left(B_k\circ \ov{A}_l^{\prime}\right)\OneVec_q$ so that $\Phi=[\phi_{kl}]_{k,l=1}^m$ and $L_l=\sum_{r=1}^m \phi_{lr} L_r^\prime$. Then
\begin{align*}
\Xi^\prime
&=\left[\OneVec_n^*\left(B_k^\prime\circ \ov{L}_l\right)\OneVec_q\right]_{k,l=1}^m
= \left[\OneVec_n^*\left(B_k^\prime\circ \sum_{r=1}^m \ov{\phi}_{lr} \ov{L}_r^\prime\right)\OneVec_q\right]_{k,l=1}^m\\
&= \left[\sum_{r=1}^m  \OneVec_n^*\left(B_k^\prime\circ  \ov{L}_r^\prime\right)\OneVec_q\, \ov{\phi}_{lr}\right]_{k,l=1}^m
= \left[\OneVec_n^*\left(B_k^\prime\circ  \ov{L}_r^\prime\right)\OneVec_q\right]_{k,r=1}^m
\left[\, \ov{\phi}_{lr} \right]_{r,l=1}^m\\
&=\BH^\prime \Phi^* =\Xi^*.
\end{align*}
We have now also proved the sixth identity in \eqref{whatRels}, the formula for $\Xi^*$ as well as the identities $\BH=\Phi\Xi^*$ and $\Xi=\Phi\BH^\prime$.

Using the above identities we find that
\[
\BH=\Phi \Xi^*=\Phi (\Phi \BH^\prime)^*=\Phi \BH^\prime \Phi^*,
\]
and
\[
\ov{\BH}\whatA=\whatL=\ov{\Phi}\whatL^\prime=\ov{\Phi}\ov{\BH}^\prime \whatA^\prime =\ov{\BH} (\ov{\Phi}^*)^{-1}\whatA^\prime.
\]
Since $\BH$ is invertible, we find that $\whatA=(\ov{\Phi}^*)^{-1}\whatA^\prime$, and hence $\ov{\Phi}^*\whatA=\whatA^\prime$. This completes the proof.
\end{proof}

We shall now prove the second main result.

\begin{proof}[\bf Proof of Theorem \ref{T:main2}]
Define $\Phi$ as in \eqref{PhiXi}. Then $\Phi$ is invertible, by Theorem \ref{T:LkLk'Rel}, the last identity in \eqref{LL'AA'forms} corresponds to the last identity in \eqref{whatRels}, while the remaining two identities in \eqref{LL'AA'forms} are obtained from the last two identities on the first line of \eqref{whatRels} using Lemma \ref{L:MatTrans}; see the proof of Theorem \ref{T:LkLk'Rel} for further details.
\end{proof}

The characteristic feature of the matrix $\whatA$ in Theorem \ref{T:HillRep} is $\kr \whatA= \kr \BL$.

\begin{proposition}\label{P:whatAchar}
Assume $\cL$ as in \eqref{cL} is $*$-linear and let $m$ be the rank of the Choi matrix $\BL$ associated with $\cL$. Let $\whatA\in\BF^{m \times nq}$ with $\kr \whatA=\kr \BL$. Then $\whatA$ has full row rank, so that $\whatA\whatA^*$ is invertible, and we have $\BL=\whatA^* \BH^T \whatA$ with
\[
\BH^T=(\whatA\whatA^*)^{-1} \whatA \BL \whatA^* (\whatA\whatA^*)^{-1}.
\]
In particular, $\cL$ admits a minimal Hill representation \eqref{HillRep} with $A_k=\vect^{-1}_{n\times q} (\what{a}_k^T)$, $k=1,\ldots,m$, where $\what{a}_k$ is the $k$-th row of $\whatA$.
\end{proposition}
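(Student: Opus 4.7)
The plan is to verify the three assertions of the proposition in sequence. For the full row rank of $\whatA$, note that since $\cL$ is $*$-linear, Theorem \ref{T:*-linear} gives that $\BL$ is Hermitian, so $\dim \kr \BL = nq - \rank \BL = nq - m$. Using $\kr \whatA = \kr \BL$, this immediately gives $\rank \whatA = m$, so $\whatA$ has full row rank and $\whatA \whatA^*$ is an invertible matrix in $\BF^{m\times m}$.

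For the factorization $\BL = \whatA^* \BH^T \whatA$, the key observation is $\im \whatA^* = (\kr \whatA)^\perp = (\kr \BL)^\perp = \im \BL$, where the last equality uses $\BL = \BL^*$. The matrix $Q := \whatA^*(\whatA\whatA^*)^{-1}\whatA$ is then the orthogonal projection onto $\im \whatA^* = \im \BL$: one checks $Q^2=Q$ by cancelling the middle $\whatA\whatA^*$, and $Q^*=Q$ since $\whatA\whatA^*$ is Hermitian. Because $\im \BL$ and $\im \BL^*$ both lie in $\im Q$, we have $Q\BL = \BL Q = \BL$, so $\BL = Q\BL Q$, which expands directly to the claimed formula with $\BH^T$ as stated. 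Taking conjugate transpose and using $\BL^* = \BL$ shows that $\BH^T$, and hence $\BH$, is Hermitian.

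For the Hill representation, I would apply the involution $\La_{(n,q)}^{(n,q)}$ to the factorization and use Lemma \ref{L:DiadTens}. With $A_k := \vect^{-1}_{n\times q}(\what{a}_k^T)$, the columns of $\whatA^*$ match the vectorizations $\vect(A_k)$ under the appropriate transposition/conjugation convention, so the factorization $\BL = \whatA^* \BH^T \whatA$ reads $\BL = \sum_{k,l} \BH_{kl}\, \vect(A_l)\vect(\ov{A_k})^T$ after using $\BH = \BH^*$. Applying the inverse of the involution then yields $L = \sum_{k,l} \BH_{kl}\, \ov{A_k} \otimes A_l$, and combining this with the Kronecker identity $\vect(A_l V A_k^*) = (\ov{A_k}\otimes A_l)\vect(V)$ together with \eqref{Linv} produces the desired $\cL(V) = \sum_{k,l} \BH_{kl}\, A_l V A_k^*$. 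Minimality is automatic since the number of terms equals $m = \rank \BL$, the minimum established in Corollary \ref{C:minimal}.

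The main obstacle will be the indexing bookkeeping in this last step: Theorem \ref{T:HillRep} uses the convention $\whatA^* = [\vect(A_1)\ \cdots\ \vect(A_m)]$, so that the $k$-th row of its $\whatA$ is $\vect(A_k)^*$, whereas here $A_k$ is defined via $\vect(A_k) = \what{a}_k^T$. Reconciling the distinction between transpose and conjugate transpose relies crucially on $\BH$ being Hermitian so that $\BH^T = \ov{\BH}$ and conjugation can be absorbed symmetrically into the sums, ensuring that the factorization produced above coincides with the canonical one of Theorem \ref{T:HillRep}.
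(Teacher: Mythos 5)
Your argument for the two claims the paper actually proves is the same as the paper's: full row rank from $\kr\whatA=\kr\BL$ and rank--nullity, then the observation that $Q=\whatA^*(\whatA\whatA^*)^{-1}\whatA$ is the orthogonal projection onto $\im\whatA^*=\im\BL$ (using $\BL=\BL^*$), so that $\BL=Q\BL Q$ expands to the stated formula. No differences there.

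Where you go beyond the paper is the final ``in particular'' clause, which the paper leaves entirely implicit, and here the conjugation bookkeeping you flag as ``the main obstacle'' is not actually resolved by your appeal to $\BH=\BH^*$. With $A_k:=\vect^{-1}(\what{a}_k^T)$ the $k$-th row of $\whatA$ is $\vect(A_k)^T$, so the columns of $\whatA^*$ are $\vect(\ov{A}_k)$ and the factorization expands as $\BL=\sum_{k,l}\BH_{kl}\,\vect(\ov{A}_l)\vect(A_k)^T$, not as $\sum_{k,l}\BH_{kl}\,\vect(A_l)\vect(\ov{A}_k)^T$; applying $\La$ and Lemma \ref{L:DiadTens} then produces a Hill representation in the matrices $\ov{A}_k$ rather than $A_k$, and Hermiticity of $\BH$ only returns you to the same expression (it gives $\ov{\BL}=\BL^T$, which is circular). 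Since $\kr\whatA=\kr\BL$ only yields $\tu{span}\{\ov{A}_k\}=\tu{span}\{L_{ij}\}$, and this span need not be conjugation-invariant, the conjugate cannot be ``absorbed.'' The convention consistent with Lemma \ref{L:MatTrans} and Theorem \ref{T:HillRep} is that the $k$-th row of $\whatA$ is $\vect(A_k)^*$, i.e.\ one should take $A_k=\vect^{-1}(\what{a}_k^*)=\ov{\vect^{-1}(\what{a}_k^T)}$; over $\BF=\BR$ the distinction disappears and your argument (via Theorem \ref{T:HillRepChar} or via $\La$ and Corollary \ref{C:minimal}) goes through verbatim. This is as much a wrinkle in the statement as in your proof, but the sentence claiming the conjugation ``can be absorbed symmetrically into the sums'' is the one step that does not hold as written.
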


\begin{proof}[\bf Proof]
Since $\kr \whatA= \kr \BL$, we have $\rank \whatA=\rank \BL=m$, so that $\whatA$ has full row rank. This implies that $\whatA\whatA^*$ is invertible and that the projection on $\im \whatA^*=\kr\whatA^\perp$ is given by $P_{\im \whatA}=\whatA^*(\whatA\whatA^*)^{-1}\whatA$. Since $\BL$ is selfadjoint, $\im \BL=\kr \BL^\perp=\im\whatA^*$ and thus
\[
\BL=P_{\im \whatA} \BL P_{\im \whatA} = \whatA^*(\whatA\whatA^*)^{-1}\whatA \BL \whatA^*(\whatA\whatA^*)^{-1}\whatA = \whatA^* \BH^T \whatA. \qedhere
\]
\end{proof}

\begin{remark}
On various places in this section, e.g., \eqref{Hill} and \eqref{PhiXi}, we encountered matrices of the form
\[
\left[\OneVec_n^*\left(M_k\circ \ov{N}_l\right)\OneVec_q\right]_{k,l=1}^m,
\]
for matrices $M_k,N_k\in\BF^{n\times q}$, $k=1,\ldots,m$. Note that, via \eqref{TraceInn}, we have
\[
\OneVec_n^*\left(M_k\circ \ov{N}_l\right)\OneVec_q= \trace(M_k N_l^*)=\inn{M_k}{N_l}_{\BF^{n \times q}},
\]
with $\inn{\ }{\ }_{\BF^{n \times q}}$ indicating the trace inner product on $\BF^{n \times q}$. Hence the matrix can also be written as
\[
\left[\inn{M_k}{N_l}_{\BF^{n \times q}}\right]_{k,l=1}^m.
\]
\end{remark}

\section{Examples and special cases}\label{S:Example}

When $\cL$ in \eqref{cL} is a $*$-linear map, the Choi matrix $\BL$ is Hermitian while the matricization $L$ satisfies $\ov{L}=\fC_n L \fC_q$. It is clear how to see that $\BL$ is Hermitian, but what $\ov{L}=\fC_n L \fC_q$ means is less transparent. Decompose $L$ as
\begin{equation}\label{Ldec}
L=\left[L_{ij}\right]\in \BF^{n^2 \times q^2} \quad\mbox{with}\quad L_{ij}=\left[\ell^{ij}_{kl}\right]\in \BF^{n \times q}.
\end{equation}
Using Theorem \ref{P:Herm} it follows that $\cL$ being $*$-linear is the same as
\begin{equation}\label{Lherm}
\ell^{ij}_{kl}=\ov{\ell}^{kl}_{ij},\quad 1 \le i,k\le n, \quad 1 \le j,l \le q.
\end{equation}
One way of interpreting this characterization is that ``structural properties of $L$ as a block matrix reoccur at the level of the blocks.'' We illustrate this by considering several examples in the next subsection. In Subsection \ref{SubS:HillMat} we consider the Hill matrix for the special case where the matrices $L_1,\ldots,L_m$ are chosen among the blocks $L_{ij}$, and revisit the examples in this context.

\subsection{Structural properties of $L$}

We shall present two general lemmas, each followed by a list of structures that are repeated at the level of the blocks once present at the block level, and conversely, as a consequence of the result. These lists just serve a an illustration and are by no means exhaustive.

\begin{lemma} \label{zero structure}
Assume $\cL$ in \eqref{cL} is $*$-linear and decompose $L$ defined by \eqref{Ldef} as in \eqref{Ldec}.  Let $i \in \{1,\ldots,n\}$ and $j \in\{1,\ldots,q\}$. Then $L_{ij}=0$ if and only if $(i,j)$-th entry in $L_{kl}$ is zero for each $k\in \{1,\ldots,n\}$ and $l \in \{1,\ldots,q\}$. In particular, for any subset $C\subset \{1,\ldots,n\} \times \{1,\ldots,q\}$  we have $L_{ij}=0$ for all $(i,j)\in C$ if and only if for each block $L_{kl}$, $k\in\{1,\ldots,n\}$ and $l \in \{1,\ldots, q\}$, the entries with indices corresponding to the elements in $C$ are zero.
\end{lemma}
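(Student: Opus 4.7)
The plan is to reduce everything to the entrywise characterization \eqref{Lherm}, which is the concrete form of the $*$-linearity condition $\overline{L} = \fC_n L \fC_q$ given in Theorem \ref{T:*-linear} and Proposition \ref{P:Herm}. Since the statement is purely about scalar entries of $L$, once \eqref{Lherm} is in hand the rest is bookkeeping.

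First I would fix $i\in\{1,\ldots,n\}$ and $j\in\{1,\ldots,q\}$ and unpack both sides. By definition of the block decomposition in \eqref{Ldec}, the condition $L_{ij}=0$ is equivalent to $\ell^{ij}_{kl}=0$ for every $k\in\{1,\ldots,n\}$ and $l\in\{1,\ldots,q\}$. On the other hand, the $(i,j)$-th entry of the block $L_{kl}$ is by definition $\ell^{kl}_{ij}$, so the condition ``the $(i,j)$-th entry in $L_{kl}$ is zero for each $k,l$'' is the statement $\ell^{kl}_{ij}=0$ for all $k,l$.

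The identity \eqref{Lherm} states $\ell^{ij}_{kl}=\overline{\ell^{kl}_{ij}}$ for all admissible indices, hence $\ell^{ij}_{kl}=0$ if and only if $\ell^{kl}_{ij}=0$. Taking the conjunction over all $k,l$ then gives the desired equivalence $L_{ij}=0 \iff \ell^{kl}_{ij}=0 \text{ for all } k,l$, which is precisely the first assertion.

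For the ``in particular'' statement, given any subset $C\subset\{1,\ldots,n\}\times\{1,\ldots,q\}$, the condition $L_{ij}=0$ for all $(i,j)\in C$ is the conjunction over $(i,j)\in C$ of the individual block-zero conditions, and analogously on the entry side; since the equivalence has already been established index-by-index, the general version follows immediately by intersecting. The main (and only) thing to be careful about is keeping the role of the ``outer'' block indices $(i,j)$ and the ``inner'' entry indices $(k,l)$ straight, so that the swap of roles enforced by \eqref{Lherm} is applied correctly; no other obstacle is anticipated.
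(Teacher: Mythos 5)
Your proposal is correct and follows essentially the same route as the paper: both reduce the claim to the entrywise characterization \eqref{Lherm} of $*$-linearity, observe that $\ell^{ij}_{kl}=\overline{\ell}^{kl}_{ij}$ makes the vanishing of $\ell^{ij}_{kl}$ equivalent to that of $\ell^{kl}_{ij}$, and then quantify over the relevant indices. Your index bookkeeping (outer block indices versus inner entry indices) matches the paper's exactly, and the ``in particular'' statement is handled in the same immediate way.
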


\begin{proof}[\bf Proof]
Assume $L_{ij}=0$. Then $0=\ell_{kl}^{ij}=\ov{\ell}_{ij}^{kl}$ for all $k\in\{1,\ldots,n\}$ and $l \in \{1,\ldots, q\}$. Thus for each block $L_{kl}$, $k\in\{1,\ldots,n\}$ and $l \in \{1,\ldots, q\}$, the $(i,j)$-th entry is zero. Similarly, suppose $\ell_{ij}^{kl}=0$ for $k\in\{1,\ldots,n\}$ and $l \in \{1,\ldots, q\}$. Then $0=\ov{L}_{ij}=L_{ij}$.
\end{proof}

As a consequence we have the following structures that are present at block matrix level precisely when they are present at the level of all the blocks.
\begin{itemize}
\item[(i)] Diagonal: $L_{ij}=0$ for all $i\neq j$ $\Longleftrightarrow$ $\ell^{ij}_{kl}=0$ for all $k\neq l$ and all $i,j$.

\item[(ii)] Lower triangular: $L_{ij}=0$ for all $i<j$  $\Longleftrightarrow$ $\ell^{ij}_{kl}=0$ for all $k < l$ and all $i,j$.

\item[(ii)] Upper triangular: $L_{ij}=0$ for all $i>j$  $\Longleftrightarrow$ $\ell^{ij}_{kl}=0$ for all $k > l$ and all $i,j$.

\item[(iii)] $d$-Band matrices: $L_{ij}=0$ for all $|i-j|<d$  $\Longleftrightarrow$ $\ell^{ij}_{kl}=0$ for all $|k-l|<d$ and all $i,j$.

\item[(iv)] Hollow matrices: $L_{ii}=0$ for all $i$ $\Longleftrightarrow$ $\ell^{ij}_{kk}=0$ for all $k$ and all $i,j$.

\end{itemize}

Many variations on this can be made.

\begin{lemma}\label{repeated blocks structure}
Assume $\cL$ in \eqref{cL} is $*$-linear and decompose $L$ defined by \eqref{Ldef} as in \eqref{Ldec}.   Let $i,k \in \{1,\ldots,n\}$ and $j,l \in\{1,\ldots,q\}$. Then $L_{ij}=L_{kl}$ if and only if for all $r\in \{1,\ldots,n\}$ and $s \in \{1,\ldots,q\}$ the $(i,j)$-th entry and the $(k,l)$-th entry in $L_{rs}$ are equal.
\end{lemma}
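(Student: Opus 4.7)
The plan is to apply the entrywise characterization of $*$-linearity, namely the identity $\ell^{ij}_{kl}=\ov{\ell}^{kl}_{ij}$ from \eqref{Lherm} (equivalently Proposition \ref{P:Herm}), directly to translate the statement ``$L_{ij}=L_{kl}$'' into the entrywise condition claimed on the right-hand side. In essence, the same symmetry principle that drove Lemma \ref{zero structure} drives this lemma, with ``equal to zero'' replaced by ``equal to each other.''

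More concretely, I would proceed as follows. Fix $i,k\in\{1,\ldots,n\}$ and $j,l\in\{1,\ldots,q\}$. By definition, $L_{ij}=L_{kl}$ if and only if $\ell^{ij}_{rs}=\ell^{kl}_{rs}$ for all $r\in\{1,\ldots,n\}$ and $s\in\{1,\ldots,q\}$. Applying \eqref{Lherm} to both sides of this equality, one has $\ell^{ij}_{rs}=\ov{\ell}^{rs}_{ij}$ and $\ell^{kl}_{rs}=\ov{\ell}^{rs}_{kl}$, so the equality is equivalent to $\ov{\ell}^{rs}_{ij}=\ov{\ell}^{rs}_{kl}$, and conjugating gives $\ell^{rs}_{ij}=\ell^{rs}_{kl}$. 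Since $r,s$ range over all admissible indices, this is exactly the statement that the $(i,j)$-entry of $L_{rs}$ equals the $(k,l)$-entry of $L_{rs}$ for every $r,s$. Each step is a biconditional, so the ``only if'' and ``if'' directions follow simultaneously.

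There is no real obstacle here; the lemma is a direct corollary of \eqref{Lherm}, and the only thing to be careful about is keeping the two index pairs $(i,j),(k,l)$ (labelling which block we are comparing) distinct from the ``running'' index pair $(r,s)$ (labelling the entry within a block), and noting that complex conjugation preserves equality. In the write-up I would simply display the chain of equivalences in one short paragraph.
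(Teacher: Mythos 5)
Your proposal is correct and follows essentially the same route as the paper's own proof: both reduce the block equality $L_{ij}=L_{kl}$ to an entrywise statement and then apply the $*$-linearity relation $\ell^{ij}_{rs}=\ov{\ell}^{rs}_{ij}$ from \eqref{Lherm} (together with the fact that conjugation preserves equality) to transfer it to the $(i,j)$- and $(k,l)$-entries of every block $L_{rs}$. Your presentation as a single chain of biconditionals is, if anything, slightly cleaner than the paper's separate treatment of the two directions.
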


\begin{proof}[\bf Proof]
Assume $L_{ij}=L_{kl}.$ Then $\ov{\ell}_{ij}^{rs}=\ell_{rs}^{ij}=\ell_{rs}^{kl}=\ov{\ell}_{kl}^{rs}$ for all $r \in \{1,\ldots,n\}$ and $s \in \{1,\ldots,q\}.$ Thus each $\ov{L}_{rs}$ (and hence $L_{rs}$) has the $(i,j)$-th entry equal to the $(k,l)$-th entry. For the backward direction, assume $\ell^{rs}_{ij}=\ell^{rs}_{kl}$. Using the $*$-linearity of $\cL$, this is equivalent to $\ov{\ell}^{ij}_{rs}=\ov{\ell}_{rs}^{kl}$ for all $i,j,k,l.$ Hence each entry in $\ov{L}_{ij}$ is equal to the corresponding entry in $\ov{L}_{kl}$. Thus $L_{ij}=L_{kl}.$
\end{proof}

A few more structures that appear at block level provided they appear in all the blocks, and conversely, are listed next. Here we assume $n=q$, which is required for some, but not all structures.

\begin{itemize}
\item[(i)] Toeplitz matrices: $L_{ij}=L_{kl}$ whenever $i-j=k-l$ $\Longleftrightarrow$ $\ell^{rs}_{kl}=\ell^{rs}_{ij}$ whenever $i-j=k-l$ for all $r,s$.

\item[(ii)] Hankel matrices: $L_{ij}=L_{kl}$ whenever $i+j=k+l$ $\Longleftrightarrow$ $\ell^{rs}_{kl}=\ell^{rs}_{ij}$ whenever $i+j=k+l$ for all $r,s$.

\item[(iii)] Circulant matrices: $L_{ij}=L_{kl}$ whenever $i-j\equiv k-l$ mod $n$ $\Longleftrightarrow$ $\ell^{rs}_{kl}=\ell^{rs}_{ij}$ whenever $i-j\equiv k-l$ mod $n$ for all $r,s$.

\item[(iv)] Centrosymmetric matrices: $L_{ij}=L_{n-i+1\, n-j+1}$ for all $i,j$ $\Longleftrightarrow$ $\ell^{ij}_{kl}=\ell^{ij}_{n-k+1\, n-l+1}$ for all $i,j,k,l$.

\item[(v)] Symmetric matrices: $L_{ij}=L_{ji}$ for all $i,j$ $\Longleftrightarrow$ $L_{ij}=L_{ij}^T$ for all $i,j$.

\item[(vi)] Hermitian matrices: $L_{ij}=\ov{L}_{ji}$ for all $i,j$ $\Longleftrightarrow$ $L_{ij}=L_{ij}^*$ for all $i,j$.

%
%
%

\end{itemize}

\begin{lemma}\label{L:Ortho Structure}
Assume $\cL$ in \eqref{cL} is $*$-linear and decompose $L$ defined by \eqref{Ldef} as in \eqref{Ldec}.
Let $i,k \in \{1,\ldots,n\}$ and $j,l \in\{1,\ldots,q\}$. Then $L_{ij}$ and $L_{kl}$ are orthogonal with respect to the trace inner product if and only if the matrices $\left[\ell_{ij}^{rs}\right]$ and $\left[\ell_{kl}^{rs}\right]$ in $\BF^{n \times q}$ are orthogonal with respect to the trace inner product.
\end{lemma}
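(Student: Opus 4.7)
The plan is to verify the equivalence by a direct computation of the two trace inner products, invoking the entrywise characterization \eqref{Lherm} of $*$-linearity proved in Theorem \ref{T:*-linear} (via Proposition \ref{P:Herm}).

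First, I would unpack the trace inner products. With the notation $L_{ij}=\left[\ell_{rs}^{ij}\right]\in\BF^{n\times q}$ and writing $M_{ij}:=\left[\ell_{ij}^{rs}\right]_{r,s}\in\BF^{n\times q}$ for the matrix whose $(r,s)$-entry is the $(i,j)$-entry of the block $L_{rs}$, formula \eqref{TraceInn} gives
\begin{equation*}
\langle L_{ij},L_{kl}\rangle_{\BF^{n\times q}}=\sum_{r=1}^{n}\sum_{s=1}^{q}\ell_{rs}^{ij}\,\overline{\ell_{rs}^{kl}},\qquad
\langle M_{ij},M_{kl}\rangle_{\BF^{n\times q}}=\sum_{r=1}^{n}\sum_{s=1}^{q}\ell_{ij}^{rs}\,\overline{\ell_{kl}^{rs}}.
\end{equation*}

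Next, I would apply the $*$-linearity identity \eqref{Lherm}, namely $\ell_{rs}^{ij}=\overline{\ell_{ij}^{rs}}$ (and the same with $(i,j)$ replaced by $(k,l)$), to rewrite the first sum as
\begin{equation*}
\langle L_{ij},L_{kl}\rangle_{\BF^{n\times q}}=\sum_{r,s}\overline{\ell_{ij}^{rs}}\cdot\ell_{kl}^{rs}=\overline{\sum_{r,s}\ell_{ij}^{rs}\overline{\ell_{kl}^{rs}}}=\overline{\langle M_{ij},M_{kl}\rangle_{\BF^{n\times q}}}.
\end{equation*}

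Since a complex number vanishes precisely when its conjugate does, this identity immediately yields the claimed equivalence, and the proof is complete. There is no real obstacle here; the only subtlety is keeping the index conventions straight, since the superscripts label the block while the subscripts label the entries within the block, and the lemma compares each block $L_{ij}$ with the ``transposed-indexed'' matrix $M_{ij}$ built from the $(i,j)$-entries of all blocks.
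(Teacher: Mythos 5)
Your proof is correct and follows essentially the same route as the paper's: a direct computation of $\inn{L_{ij}}{L_{kl}}_{\BF^{n\times q}}$ entrywise, followed by the substitution $\ell^{ij}_{rs}=\ov{\ell}^{rs}_{ij}$ from the $*$-linearity characterization \eqref{Lherm}. If anything you are slightly more precise than the paper in observing that the two inner products agree only up to complex conjugation (equivalently, up to swapping the arguments), which of course suffices for the orthogonality equivalence.
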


\begin{proof}[\bf Proof]
The fact that $\cL$ is $*$-linear implies that
\begin{align*}
\inn{L_{ij}}{L_{kl}}_{\BF^{n \times q}} & = \vect\left(L_{kl}\right)^* \vect\left(L_{ij}\right)= \sum_{r,s} \ov{\ell}_{rs}^{kl} \ell_{rs}^{ij}
= \sum_{r,s} \ell^{rs}_{kl} \ov{\ell}^{rs}_{ij} = \sum_{r,s} \ov{\ell}^{rs}_{ij} \ell^{rs}_{kl}\\
&=  \inn{\left[\ell_{ij}^{rs}\right]}{\left[\ell_{kl}^{rs}\right]}_{\BF^{n \times q}},
\end{align*}
from which the result follows directly.
\end{proof}

\subsection{The Hill matrix}\label{SubS:HillMat}
The Hill matrix $\BH$ of $\cL$ is determined by choosing matrices $L_1,\ldots,L_m$ in $\BF^{n\times q}$ so that the span of $L_1,\ldots,L_m$ coincides with the span of the blocks $L_{ij}$ in \eqref{Ldec}. It is always possible to take for $L_1,\ldots,L_m$ $m$ linearly independent matrices among the blocks $L_{ij}$:
\begin{equation}\label{LkLijChoice}
\begin{aligned}
L_k=L_{i_k j_k},\quad k=1,\ldots,m,&\quad i_k\in\{1,\ldots,n\},\, j_k\in\{1,\ldots,q\},\\
L_1,\ldots,L_m &\mbox{ linearly independent.}
\end{aligned}
\end{equation}
In that case, the Hill matrix can be described in terms of the entries of $L$.

\begin{lemma}\label{L:HillMatSC}
Assume $\cL$ in \eqref{cL} is $*$-linear. Decompose the matricization $L$ as in \eqref{Ldec} and select $L_1,\ldots,L_m$ as in \eqref{LkLijChoice}. Then the Hill matrix $\BH$ in \eqref{Hill} determined by $\cL$ and $L_1,\ldots,L_m$ is given by
\[
\BH=\left[\ell_{i_l j_l}^{i_k j_k}\right]_{k,l=1}^m.
\]
\end{lemma}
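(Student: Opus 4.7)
My plan is to exploit the freedom in the choice of the scalars $\be_{ij}^k$ (equivalently, of the matrices $B_k$) that enter the definition \eqref{Hill} of $\BH$. Since the matrices $L_1,\ldots,L_m$ are selected among the blocks $L_{ij}$ as in \eqref{LkLijChoice}, the most natural choice is the standard matrix unit, namely
\[
B_k=\cE^{(n,q)}_{i_k j_k},\qquad k=1,\ldots,m,
\]
which trivially satisfies the first identity in \eqref{LkLij}: indeed $\sum_{i,j} (\cE^{(n,q)}_{i_k j_k})_{ij}\, L_{ij}=L_{i_k j_k}=L_k$.

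With this choice of $B_k$, the computation of the $(k,l)$-entry of $\BH$ is immediate. Since $B_k\circ\ov{L}_l$ has all entries equal to zero except possibly at position $(i_k,j_k)$, where the entry equals $\ov{\ell^{i_l j_l}_{i_k j_k}}$, we get
\[
\OneVec_n^{\,*}\!\left(B_k\circ\ov{L}_l\right)\OneVec_q=\ov{\ell^{i_l j_l}_{i_k j_k}}.
\]
Applying $*$-linearity of $\cL$ in the form \eqref{Lherm} (that is, via Proposition \ref{P:Herm}) yields $\ov{\ell^{i_l j_l}_{i_k j_k}}=\ell^{i_k j_k}_{i_l j_l}$, which is exactly the claimed formula for $\BH$.

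The main subtlety is that the scalars $\be_{ij}^k$ are \emph{not} uniquely determined when the full family $\{L_{ij}\}$ is linearly dependent, so a priori the expression \eqref{Hill} could depend on the particular choice of $B_k$. I would address this by appealing to Theorem \ref{T:HillRepChar}: there it is shown that once $\cL$, and hence $L$ together with the matrices $A_1,\ldots,A_m$ derived from $L_1,\ldots,L_m$, are fixed, the Hill matrix is uniquely characterised by the relation $\wtilL=(\BH\otimes I_n)\wtilA$. Thus $\BH$ is genuinely a function of $\cL$ and of the choice $L_1,\ldots,L_m$ alone, so the particular legitimate choice $B_k=\cE^{(n,q)}_{i_k j_k}$ may be used without loss of generality, and the computation above delivers the lemma.
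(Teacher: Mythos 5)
Your proposal is correct and follows essentially the same route as the paper: the paper likewise takes $B_k=\cE^{(n,q)}_{i_k j_k}$ and evaluates $\OneVec_n^*\left(\cE^{(n,q)}_{i_k j_k}\circ\ov{L}_{i_l j_l}\right)\OneVec_q$ via \eqref{Lherm}. Your explicit remark that the possible non-uniqueness of the $\be_{ij}^k$ is harmless (via the characterisation \eqref{HillMatRel} in Theorem \ref{T:HillRepChar}) is a small but welcome clarification that the paper leaves implicit.
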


\begin{proof}[\bf Proof]
For this choice of $L_1,\ldots,L_m$, for $\beta_{ij}^k$ in \eqref{LkLij} we have $\beta_{ij}^k=0$ for all $(i,j)\ne(i_k,j_k)$ and $\beta_{i_kj_k}^k=1$. Thus
\begin{equation*}
B_k=\begin{bmatrix}\beta_{ij}^k \end{bmatrix}
=\cE_{i_kj_k}^{(n,q)} \quad \text{for all} \quad  1 \le k \le m.
\end{equation*}
Now
\begin{align*}
\BH
&=\begin{bmatrix}\OneVec_n^*\left(B_k \circ \overline{L}_l\right)\OneVec_q \end{bmatrix}
=\begin{bmatrix}\OneVec_n^*\left(\cE_{i_kj_k}^{(n,q)} \circ \ov{L}_{i_l j_l}\right)\OneVec_q \end{bmatrix}
=\begin{bmatrix}\ell_{i_l j_l}^{i_k j_k}\end{bmatrix}.\qedhere
\end{align*}
\end{proof}

Note that the Hill matrix $\BH$ is of size $m \times m$, hence consists of $m^2$ scalar entries. On the other hand, the linearly independent matrices $L_1,\ldots,L_m$ are of size $n \times q$, hence together they consist of $mnq$ scalar entries; often this will be more than $m^2$. That we do not loose any information is a result from the relations in \eqref{Lherm}. We illustrate this in two examples based on Lemmas \ref{zero structure}  and \ref{repeated blocks structure}.

\begin{example}
Consider the case where $L \in \BF^{n^2 \times q^2}$ is a block matrix with blocks $L_{ij}\in \BF^{n \times q}$ so that there are $m$ linearly independent blocks $L_k=L_{i_k j_k}$, $k=1,\ldots,m$, while all other blocks are zero, i.e., $L_{ij}=0$ if $(i,j)\neq (i_l,j_l)$ for all $l$. Write $L_k=\left[\ell^k_{rs}\right]$ and $L_{ij}=\left[\ell^{ij}_{rs}\right]$. Hence, by Lemma \ref{L:HillMatSC} the Hill matrix associated with this choice of $L_1,\ldots,L_m$ is given by $\BH=\left[\ell^k_{i_l,j_l}\right]\in \BF^{m \times m}$. Now using Lemma \ref{zero structure} and the fact that $L_{ij}=0$ if $(i,j)\neq (i_l,j_l)$ for all $l$, it follows that $\ell^k_{ij}=\ell^{i_k j_k}_{ij}=0$ whenever $(i,j)\neq (i_l,j_l)$ for all $l$. In particular, all non-zero entries of $L$ are contained in $\BH$.
\end{example}

\begin{example}
Consider the case where $L \in \BF^{n^2 \times q^2}$ is a block matrix with blocks $L_{ij}\in \BF^{n \times q}$ so that there are $m$ linearly independent blocks $L_k=L_{i_k j_k}$, $k=1,\ldots,m$, while all other blocks are equal to one of the matrices $L_1,\ldots,L_m$, i.e., for all $i,j$ we have $L_{ij}=L_{k_{ij}}$ for some $0\leq k_{ij}\leq m$. With $L_k$ and $L_{ij}$ decomposed as in the previuos example, again we get $\BH=\left[\ell^k_{i_l,j_l}\right]=\left[\ell^{i_k j_k}_{i_l,j_l}\right]\in \BF^{m \times m}$. In this case, since $L_{rs}=L_{k_{rs}}=L_{i_{k_{rs}} j_{k_{sr}}}$, we know from Lemma \ref{repeated blocks structure} that for $k=1,\ldots,m$ we have $\ell^{k}_{rs}=\ell^{i_k j_k}_{rs}= \ell^{i_k j_k}_{i_{k_{rs}} j_{k_{rs}}}= \ell^{k}_{i_{k_{rs}} j_{k_{rs}}}$. Thus also when $(r,s)\neq (i_l j_l)$ for all $l$, then the number $\ell^{k}_{rs}$ still appears in $\BH$.
\end{example}

\paragraph{\bf Acknowledgments}
This work is based on research supported in part by the National Research Foundation of South Africa (NRF) and the DSI-NRF Centre of Excellence in Mathematical and Statistical Sciences (CoE-MaSS). Any opinion, finding and conclusion or recommendation expressed in this material is that of the authors and the NRF and CoE-MaSS do not accept any liability in this regard.


\begin{thebibliography}{10}

\bibitem{AJP20Arx}
M. Augat, M.T. Jury, J.E. Pascoe, Effective noncommutative Nevanlinna-Pick interpolation in the row ball, and applications, preprint, arXiv:2005.07556.



\bibitem{BG15}
S. Bia\l as, S., and M. G\'{o}ra, On the existence of a common solution to the Lyapunov equations, {\em Bulletin of the Polish Academy of Sciences. Technical Sciences} {\bf 63.1} (2015), 163--168.

\bibitem{C75}
M.-D. Choi, Completely positive linear maps on complex matrices, {\em Linear Algebra Appl.} {\bf 10} (1975),285--290.





\bibitem{H69}
R.D. Hill, Inertia theory for simultaneously triangulable complex matrices, {\em Linear Algebra Appl.} {\bf  2} (1969), 131--142.

\bibitem{H73}
R.D. Hill, Linear transformations which preserve hermitian matrices, {\em Linear Algebra Appl.} {\bf 6} (1973), 257--262.

\bibitem{HJ85}
R.A. Horn and C.R. Johnson, {\em Matrix Analysis,} Cambridge U.P., Cambridge, 1985.

\bibitem{HJ91}
R.A. Horn and C.R. Johnson, {\em Topics in Matrix Analysis,} Cambridge U.P., Cambridge, 1991.

\bibitem{tHNpre}
S. ter Horst and A. Naud\'{e}, Linear matrix maps for which positivity and complete positivity coincide, preprint.

\bibitem{KMcCSZ19}
I. Klep, S. McCullough, K. \u{S}ivic, and A. Zalar, There are many more positive maps than completely positive maps, {\em Int.\ Math.\ Res.\ Not.\ IMRN} 2019, no.\ {\bf 11}, 3313--3375.



%


\bibitem{OH85}
C.J. Oxenrider and R.D. Hill, On the matrix reorderings $\Gamma$ and $\Psi,$ {\em Linear Algebra Appl.} 69 (1985), 205--212.


\bibitem{P19}
J.E. Pascoe, An elementary method to compute the algebra generated by some given matrices and its dimension, {\em Linear Algebra Appl.} {\bf 571} (2019), 132--142.

\bibitem{P19Arx}
J.E. Pascoe, The outer spectral radius and dynamics of completely positive maps, preprint, arXiv:1905.09895.

\bibitem{PH81}
J.A. Poluikis and R.D. Hill, Completely positive and Hermitian-preserving linear transformations, {\em Linear Algebra Appl.} {\bf 35} (1981), 1--10.






%

\end{thebibliography}
\end{document}